\newcommand{\dv}{\operatorname{div}}
\newcommand{\Rb}{\mathbb{R}}
\newcommand{\Kb}{\mathbb{K}}
\newcommand{\xb}{\mbox{\bf x}}
\newcommand{\dert}[1]{\frac{\partial #1}{\partial t}}
\newcommand{\ql}{\mathbf{q}_l}
\newcommand{\qg}{\mathbf{q}_g}
\newcommand{\gb}{\mathbf{g}}
\newcommand{\Hb}[1]{(\textsf{A.#1})}
\newcommand{\Pb}[1]{(\textsf{P.#1})}
\newtheorem{theorem}{Theorem}
\newtheorem{lemma}{Lemma}
\newtheorem{proposition}{Proposition}
\newtheorem{corollary}{Corollary}
\newtheorem{remark}{Remark}
\title{Two-phase two-component flow in porous medium in low solubility regime}
\author{Mladen Jurak\thanks{Faculty of Science, University of Zagreb ({\tt jurak@math.hr}).}
\and Ivana Radi\v si\'c\thanks{Faculty of Mechanical Engineering and Naval Architecture, University of Zagreb ({\tt iradisic@fsb.hr}).}  
\and  Ana \v Zgalji\'c Keko\thanks{Faculty of Electrical Engineering and Computing, University of Zagreb ({\tt ana.zgaljic@fer.hr}).}
}
\begin{document}

\maketitle

\begin{abstract}
We study a system of equations governing liquid and gas
flow  in porous media. The gas phase is homogeneous while the liquid phase is composed of a 
liquid component and dissolved gas component. It is assumed that the gas component is weakly soluble in the liquid.
We formulate a weak solution of the initial-boundary value problem and prove 
the existence theorem by passing to the limit in regularizations of the problem.
Hypothesis of low solubility is given precise mathematical meaning.
\end{abstract}

\tableofcontents

\section{Introduction}
\label{sec:introduction}

The simultaneous flow of immiscible fluids in porous media occurs in
a wide variety of applications such as   unsaturated groundwater flows and flows in underground
petroleum reservoirs. More recently, multiphase flows have generated
serious interest among engineers concerned with  nuclear waste management and in particular
 the migration of gas through the near
field environment and the host rock, which in the simplest case 
 involves two components, water and  hydrogen and
two phases, liquid and gas (see \cite{forge}). 
In this application the gas component (hydrogen) is weakly soluble in water
but the solubility is still highly important for long term gas migration and the repository pressurization.

An important consideration in the modeling of fluid  flow with mass exchange between phases
is the choice of the primary variables that define the thermodynamic state of 
the fluid system, \cite{WuFo}. When a phase appears or disappears, the set of appropriate thermodynamic
variables may change. In mathematical analysis of the two-phase, two-component model presented 
in this article  we chose a formulation based on persistent variable approach \cite{TA02-BJS, CG-BJS, EDF}. 
Namely, we  use two pressure-like variables capable of describing the fluid system in both one-phase
and two-phase regions.

The mathematical theory of incompressible, immiscible and isothermal  two-phase flow through porous media is developed
in extensive literature and summarized in several monographs \cite{ant-kaz-mon1990, ChJa, GG-MM-1996} 
and articles \cite{Chen-I,Chen-II}. An analysis of nonisothermal immiscible incompressible model is presented in \cite{AJPP}.
Development of mathematical theory for compressible, immiscible 
two-phase flow  started with the work of Galusinski and Saad \cite{CG-MS1, CG-MS2, CG-MS3} and is further
developed in \cite{AJ, AJZK, AJV, ba-lp-ap, Khalil-Saad, Khalil2, schweizer1, hilhorst}. For the two-phase
compositional flow model there are much less publications. First, incomplete results were obtained in 
\cite{FaridEx} and \cite{Mik}. More complete two-phase, two-component models were considered in articles
 \cite{saad1} and \cite{saad2}.
 In  \cite{saad1} the authors replace the phase equilibrium by the first order chemical reactions which are 
 supposed to model the mass exchange between the phases. 
  In \cite{saad2}  the phase equilibrium model is taken into account but the degeneracy of the diffusion terms 
 is eliminated by some non-physical assumptions.
 As the diffusion terms in the flow equations are multiplied by the liquid 
 saturation they can be arbitrary small (see (\ref{diff-fluxes})) and they do not add sufficient 
 regularity to the system. In this work this degeneracy of the diffusive terms is compensated by the low
 solubility of the gas component in the liquid phase which keeps the liquid phase composed mostly of the
 liquid component (water). This compensation allows us to treat the complete two-phase two-component model 
 without any unphysical assumptions on the diffusive parts of the model.  
 
 The outline of this paper is as follows.
  In Section 2 we give a short description  of the physical and mathematical model of two-phase, two-component 
  flow in porous medium  considered in this study. We also introduce the global pressure that plays an important role in 
 mathematical study of the model, general assumptions on the data and some auxiliary results. 
 In Section 3 we present the main result of this paper, the existence of a weak solution to an initial 
 boundary value problem for considered two-phase, two-component   flow model.  This theorem is proved 
 in the following sections. In Section 4 we regularize the system and discretize the time derivatives, 
 obtaining thus a sequence of elliptic problems. In Section 5 we prove the existence theorem for 
 the elliptic problems by an application of the Schauder fixed point theorem. In this section we perform 
 further regularizations and apply special test functions which lead to the energy estimate on which
 the existence theorem is based. In Sections 6 and 7 we eliminate the time discretization and 
 the initial regularization of the system by passing to zero in the small parameters. At the limit 
 we obtain a solution of the initial  two-phase, two-component   flow model.

\section{Mathematical model}
\label{sec:model}

We consider herein a porous medium saturated with a fluid composed of 2 phases,
 \textit{liquid} and \textit{gas}, and according to the application we have in mind, we
 consider the fluid as a mixture of two
 components: a \textit{liquid component} which does not evaporate  and  a \textit{low-soluble component}
 (such as hydrogen) which is present mostly in the gas phase and dissolves in the liquid phase. 
  The porous medium is assumed to be rigid and in the thermal equilibrium, 
  while the liquid component is assumed incompressible.

The two phases are denoted by indices, $l$ for liquid, and $g$ for
gas. Associated to each phase $\sigma\in\{l,g\}$, we have the phase pressures $p_\sigma$, the phase
saturations $S_\sigma$, the phase mass densities $\rho_\sigma$ and
the phase volumetric fluxes $\mathbf{q}_\sigma$  given by the {\em  Darcy-Muskat law } (see \cite {Ours1, Ours2, ChJa}):
\begin{equation}
 \ql = - \lambda_l(S_l)\Kb(\xb) \left( \nabla p_l -\rho_l \gb\right),\quad
          \qg  = - \lambda_g(S_l) \Kb(\xb) \left( \nabla p_g -\rho_g \gb\right),
	  \label{darcy}
\end{equation}
where $ \Kb(\xb)$ is the absolute permeability tensor,
$\lambda_\sigma(S_l)$ is  the $\sigma-$phase relative mobility
function, and $\gb$ is the gravity acceleration. There is no void space in the porous medium, 
meaning that the phase saturations  satisfy 
$
S_l + S_g = 1 .   
$
The phase pressures are connected through the  {\em capillary pressure law} (see \cite {Ours1,ChJa}) 
\begin{equation}
 p_c(S_l) = p_g -p_l,
 \label{capillary}
\end{equation}
where the function $p_c$ is a strictly
decreasing function of the liquid saturation, $ p_c'(S_l) < 0 $.

In the gas phase, we neglect the liquid component vapor such that the gas mass density  depends 
only on the gas pressure: 
\begin{equation}
\rho_g = \hat\rho_g(p_g), \label{ideal}
\end{equation}
where in the case of the ideal gas law we have $\hat\rho_g(p_g) = C_v p_g$
with $C_v = M^h/(RT)$, where $M^h$ is molar mass of the gas component, $T$ is the temperature and $R$ is the universal gas constant.

The liquid component will be denoted by upper index $w$ (suggesting \textit{water}) and
the low-soluble gas component will be denoted by upper index $h$  (suggesting \textit{hydrogen}). 
In order to describe the quantity of the gas component dissolved in the liquid we introduce mass 
concentration $\rho_l^h $ which gives the mass of dissolved gas component in the volume of the liquid mixture. To simplify notation we will denote $\rho_l^h$ by $u$.
 The assumption of thermodynamic equilibrium  leads to functional dependence:
\begin{equation}
 u = \hat u(p_g),   \label{Henry}
\end{equation}
if the gas phase is present. In the absence of the gas phase $ u $ must be considered as an
independent variable. 
If the Henry law is applicable then the function $\hat u$ can be taken as a linear function  $u = C_h p_g$,
  where $C_h = H M^h$ and $H$ is the Henry law constant.
We suppose that the function $p_g \mapsto \hat u(p_g)$ is defined  and invertible on $ [0,\infty)$
and therefore we can express the gas pressure as a function of  $u$,
\begin{align}
  p_g = \hat{p}_g(u),
  \label{HenryInverz}
\end{align}
where $\hat{p}_g$ is the inverse of $\hat u$. 

For liquid density, due to hypothesis of small solubility 
and liquid incompressibility we may assume constant liquid component mass concentration, i.e.:
\begin{equation}
    \rho_l^w = \rho_l^{std},
\label{const-w}
\end{equation}
where $\rho_l^{std}$ is the standard liquid component mass density (a constant). The liquid mass density is then:
$\rho_l = \rho_l^{std} + u$.

Finally, the  mass conservation for each component leads to the following differential
equations:
\begin{align}
   \rho_l^{std} \Phi\dert{S_l}  + \dv \left(  \rho_l^{std}  \ql   +\mathbf{j}_l^w \right) = {\cal F}^w, \label{eq-1} \\
\Phi\dert{}\left(   u S_l +   \rho_g S_g\right) + \dv \left(  u \ql +  \rho_g  \qg +\mathbf{j}_l^h \right)
= {\cal F}^h,\label{eq-2}
\end{align}
where the phase flow velocities, $\ql$ and $\qg$, are given by the
Darcy-Muskat law (\ref{darcy}), ${\cal F}^k$ and $\mathbf{j}_l^k$, $k\in\{w,h\}$,  
are respectively  the $k-$component source terms and the diffusive flux in the liquid
phase. 
The diffusive fluxes are given by the Fick law
which can be expressed 
through the gradient of the mass fractions $X_l^h= u /\rho_l$ and $X_l^w=\rho_l^w/\rho_l$
as in \cite{Ours2, TA02-BJS}:
\begin{align}
\mathbf{j}_l^h =  -\Phi S_l D \rho_l \nabla X_l^h,\quad \mathbf{j}_l^w =  -\Phi S_l D \rho_l \nabla X_l^w  ,
\label{diff-fluxes}
\end{align}
where $D$ is a molecular diffusion coefficient of dissolved gas in the
liquid phase, possibly corrected by the tortuosity factor of
the porous medium (see \cite{Ours2}). Note that we have $X_l^h + X_l^w =1$, leading to $\mathbf{j}_l^h +\mathbf{j}_l^w = 0$.
The source terms  ${\cal F}^w$ and  ${\cal F}^h$ will be taken in the usual form:
\begin{align}
  {\cal F}^w = \rho_l^{std} F_I - \rho_l^{std} S_l F_P,\quad  {\cal F}^h = - (u S_l +   \rho_g S_g) F_P ,\quad F_I, F_P \geq 0,
  \label{source-terms}
\end{align}
where $F_I$ is the rate of the fluid injection and $F_P$ is the rate of the production.
For simplicity we  supposed that only wetting phase is injected, while composition of produced fluid is not
a priori known.

In the model described here the liquid phase is always present but the gas phase can disappear and reappear 
in certain regions of the porous domain. Mathematical analysis of this model presented in this article is based on 
the {\em persistent variables} approach. Several sets of persistent variables are proposed in the literature
for this model (see \cite{TA02-BJS, CG-BJS, EDF}). We chose approach taken in \cite{CG-BJS} and in \cite{EDF}
which consists in taking $p_l$ and $u$ as variables that can describe the fluid state in both one-phase and two-phase 
regions. In particular, we follow the approach proposed in  \cite{EDF} which 
consists in using relation (\ref{HenryInverz}) to define the gas pressure 
even in the case where the gas phase is nonexistent.  The {\em gas pseudo-pressure}
defined by  (\ref{HenryInverz}) is an artificial variable {\em proportional} to the concentration of the dissolved
gas in the one-phase region and equal to the gas phase pressure in the two-phase region. In that way one avoids 
using directly the concentration of the dissolved gas $u$ as a primary variable and uses more traditional 
gas pressure, suitably extended in one phase region. 

We consider the liquid pressure $p_l$ and the gas pseudo-pressure $p_g$ as primary variables from which we 
calculate several {\em secondary} variables: 
\begin{align}\label{second}
S_l = p_c^{-1}(p_g - p_l), \; S_g = 1-S_l, \; u = \hat{u}(p_g),\; \rho_g = \hat{\rho}_g(p_g), \; \rho_l = \rho_l^{std}+ \hat{u}(p_g).
\end{align}
 Note that in the two-phase region we can    recover
the liquid saturation by inverting the capillary pressure curve, $S_l = p_c^{-1}(p_g - p_l)$. In the one phase region
we  set the  liquid saturation to one, which
amounts to extending the inverse of the capillary pressure curve by one for negative pressures (see \Hb{4}), 
as described in \cite{CG-BJS}. As a consequence we have $0\leq S_l\leq 1$ by properties of the capillary pressure function
(see \Hb{4}).

\subsection{Problem formulation}
\label{sec:3}

Let $\Omega\subset \mathbb{R}^d$, for $d=1,2,3$, be a bounded Lipschitz domain and let $T>0$. 
We assume that $\partial \Omega = \Gamma_D \cup \Gamma_N$, is a regular partition of the boundary with 
$|\Gamma_D| > 0$. 
We consider the following initial-boundary value problem in $Q_T = \Omega\times (0,T)$
 for the  problem (\ref{eq-1})--(\ref{source-terms}) written in selected variables:

\begin{equation}
   \Phi\dert{S_l} - 
	\dv \Big(  \lambda_l(S_l)\Kb(\nabla p_l -\rho_l\gb)  
	        -\Phi S_l \frac{1}{\rho_l} D  \nabla u
	    \Big) +  S_l F_P  =  F_I, \label{eq-1B}
\end{equation}	 
\begin{equation}  
 \begin{split}
	\Phi\dert{}\big(   u S_l  +   \rho_g S_g\big) &-
	    \dv \Big(u \lambda_l(S_l) \Kb(\nabla p_l - \rho_l \gb)
	    + \rho_g \lambda_g(S_l)  \Kb(\nabla p_g -\rho_g \gb)\Big) \\
& -\dv\Big(\Phi S_l \frac{\rho_l^{std}}{\rho_l} D  \nabla u \Big) +  (u S_l + \rho_g S_g) F_P= 0,
\end{split}
\label{eq-2B}
\end{equation}
 with homogeneous Neumann's  boundary condition imposed: on $\Gamma_N$
\begin{equation}
\begin{split}
  \Big(  \lambda_l(S_l)\Kb(\nabla p_l -\rho_l\gb)  -\Phi S_l \frac{1}{\rho_l} D  \nabla u \Big)  \cdot\mathbf{n} = 0,\\
 \Big(u \lambda_l(S_l) \Kb \big(\nabla p_l - \rho_l \gb\big) 
	    + \rho_g \lambda_g(S_g)  \Kb\big(\nabla p_g -\rho_g \gb\big)
 + \Phi S_l \frac{\rho_l^{std}}{\rho_l} D  \nabla u \Big) \cdot\mathbf{n} = 0,
\end{split}
    \label{BC-N}
\end{equation}
on $\Gamma_N$ and 
\begin{equation}
    p_l = 0,\quad p_g = 0,           
    \label{BC-D}
\end{equation}
on $\Gamma_D.$
We impose initial conditions as follows
\begin{align}
p_l(x,0)=p_l^{0}(x),\quad p_g(x,0)=p_g^{0}(x). \label{initial}
\end{align}

All the secondary variables 
 $S_l$, $S_g$, $u$, $\rho_g$ and $\rho_l$ in 
(\ref{eq-1B}), (\ref{eq-2B}), are calculated from $p_l$ and $p_g$ by (\ref{second}). 
The boundary condition $p_g=0$ on $\Gamma_D$ is equivalent to the condition $u=0$
which impose that there is no dissolved gas on the boundary (see \Hb{5}). 

\subsection{The global pressure}

We will use the notion of the global pressure $p$ as given in \cite{ChJa}. 
The global pressure $p$ is defined in connection with the liquid pressure as
\begin{align}
p=p_l+\overline{P}(S_l),\quad \overline P(S_l)= - \int_{S_l}^1 \frac{\lambda_g(s)}{\lambda(s)}p_c'(s) ds .
\label{glrelation}
\end{align}
where 
$
  \lambda(S_l) = \lambda_l(S_l)+\lambda_g(S_l)
$
is the {\em total mobility}. 
From (\ref{glrelation}) and \Hb{4} in Section~\ref{section-MA}
  it follows  that $p_l \leq p$ and $p = p_l$ in the area where $p_g \leq p_l$.
In other words, when the gas pseudo-pressure falls  below the liquid pressure, and only the liquid phase remains,
 then the global pressure coincides with the liquid pressure. 

In the part of the domain where $p_g \geq p_l$ we have second representation of the global pressure,
\begin{align}
p=p_g+\hat P(S_l),\quad \hat P(S_l)= \int_{S_l}^1 \frac{\lambda_l(s)}{\lambda(s)}p_c'(s) ds,
\label{glrelation-g}
\end{align}
but in the domain area where $p_g < p_l$ formula (\ref{glrelation-g}) does not hold true as there 
the global pressure stays equal to the liquid pressure. 
From (\ref{glrelation}) we have a.e. 
\begin{align}\label{glrelation1-grad}
\nabla p_l=\nabla p -\frac{\lambda_g(S_l)}{\lambda(S_l)}\nabla p_c(S_l),
\end{align}
and from (\ref{glrelation-g}) it follows 
\begin{align}\label{glrelation2-grad}
\nabla p_g=\nabla p +\frac{\lambda_l(S_l)}{\lambda(S_l)}\nabla p_c(S_l),
\end{align}
in the part of $Q_T$ where $p_g \geq p_l$. 
By introducing the functions,
\begin{align}
  \label{alpha-beta}
\gamma(S_l) = \sqrt{\frac{\lambda_l (S_l)\lambda_g(S_l)}{\lambda(S_l)} }, \ \ \alpha(S_l)=-\gamma(S_l) p_c'(S_l) \
\ \mathrm{and}  \ \ \beta(S_l)=\int_{0}^{S_l}\alpha(s)\,ds,
\end{align}
we can write formally
\begin{equation}
  \label{fund-gl1a}
	\begin{split}
\lambda_g(S_l)\nabla p_g=\lambda_g(S_l)\nabla p - \gamma(S_l) \nabla \beta(S_l),\quad
\lambda_l(S_l)\nabla p_l=\lambda_l(S_l)\nabla p + \gamma(S_l)\nabla \beta(S_l).
\end{split}
\end{equation}
These two equations hold true a.e in $Q_T$ if $p,\beta(S_l)\in H^1(\Omega)$ for a.e. $t\in (0,T)$. 
From here we easily conclude that the following fundamental equality holds:
\begin{lemma}
  \label{lema-fund-gl1}
Let  $p$ and $\beta(S_l)$ be $H^1(\Omega)$ functions for a.e. $t\in (0,T)$. Then 
equations (\ref{fund-gl1a}) hold a.e. in $Q_T$ as well as equation: 
  \begin{align}
\lambda_l(S_l)\Kb\nabla p_l\cdot \nabla p_l +\lambda_g(S_l)\Kb\nabla p_g\cdot \nabla p_g = 
\lambda(S_l)\Kb\nabla p\cdot \nabla p + \Kb\nabla \beta(S_l)\cdot \nabla\beta(S_l).
\label{fund-gl1}
\end{align}
\end{lemma}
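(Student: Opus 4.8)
The plan is to reduce both assertions to a single chain-rule identity and then to a short algebraic manipulation; the analytic substance is entirely in the first reduction. The key identity is
\[
\gamma(S_l)\nabla p_c(S_l)=-\nabla\beta(S_l)\qquad\text{a.e. in }Q_T,
\]
which is nothing but $\beta'=\alpha=-\gamma\,p_c'$ read through the chain rule for Sobolev functions: since $\beta(S_l)\in H^1(\Omega)$ and $\beta\circ p_c^{-1}$ is Lipschitz, one has $\nabla\beta(S_l)=\alpha(S_l)\nabla S_l=-\gamma(S_l)\,p_c'(S_l)\nabla S_l=-\gamma(S_l)\nabla p_c(S_l)$. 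I would stress that this form never requires dividing by $\gamma$, $\lambda_l$ or $\lambda_g$, each of which may vanish, so it survives the degeneracies.

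First I would prove (\ref{fund-gl1a}). On the two-phase part $\{p_g>p_l\}$ both gradient representations (\ref{glrelation1-grad}) and (\ref{glrelation2-grad}) are available; multiplying them by $\lambda_l(S_l)$ and $\lambda_g(S_l)$ and inserting $\tfrac{\lambda_l\lambda_g}{\lambda}\nabla p_c=\gamma^2\nabla p_c=\gamma\,(\gamma\nabla p_c)=-\gamma\nabla\beta$ turns them verbatim into the two equations of (\ref{fund-gl1a}). On the one-phase part $\{p_g\le p_l\}$ one has $S_l\equiv1$ by the extension of $p_c^{-1}$ (see \Hb{4}), hence $\nabla S_l=0$ a.e. there (gradients of $H^1$ functions vanish a.e. on their level sets), so that $\nabla\beta(S_l)=0$ and $\nabla p_c(S_l)=0$; equation (\ref{glrelation1-grad}) then reads $\lambda_l\nabla p_l=\lambda_l\nabla p$, the second equation of (\ref{fund-gl1a}), while the first one holds because the gas mobility vanishes in the absence of gas, $\lambda_g(1)=0$, so both members are zero. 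The interface $\{p_g=p_l\}$ needs no separate treatment since $\nabla(p_g-p_l)=0$ a.e. on it.

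Finally I would obtain (\ref{fund-gl1}). Rather than divide the identities of (\ref{fund-gl1a}) by the mobilities, I would substitute (\ref{glrelation1-grad}) and (\ref{glrelation2-grad}) straight into the left-hand side and expand the two quadratic forms: the $\Kb\nabla p\cdot\nabla p$ coefficients add up to $\lambda_l+\lambda_g=\lambda$, the two mixed terms $\mp2\tfrac{\lambda_l\lambda_g}{\lambda}\Kb\nabla p\cdot\nabla p_c$ cancel, and the surviving term equals $\tfrac{\lambda_l\lambda_g^2+\lambda_g\lambda_l^2}{\lambda^2}\Kb\nabla p_c\cdot\nabla p_c=\tfrac{\lambda_l\lambda_g}{\lambda}\Kb\nabla p_c\cdot\nabla p_c=\Kb(\gamma\nabla p_c)\cdot(\gamma\nabla p_c)=\Kb\nabla\beta\cdot\nabla\beta$, which is exactly (\ref{fund-gl1}); on the one-phase part both sides collapse to $\lambda(1)\Kb\nabla p\cdot\nabla p$ because $\lambda_g(1)=0$ and $\nabla\beta(S_l)=0$. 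I expect the only real difficulty to be measure-theoretic rather than algebraic: justifying the chain rule at the minimal regularity assumed, proving that $\nabla S_l$, $\nabla\beta(S_l)$ and $\nabla p_c(S_l)$ vanish a.e. on $\{S_l=1\}$, and assigning a meaning to the degenerate products $\lambda_\sigma\Kb\nabla p_\sigma\cdot\nabla p_\sigma$ on the sets where $\lambda_\sigma=0$ by reading them through the well-defined fluxes $\lambda_\sigma\nabla p_\sigma$ supplied by (\ref{fund-gl1a}). Once the gradient relations are secured a.e. in each region, both (\ref{fund-gl1a}) and (\ref{fund-gl1}) follow immediately.
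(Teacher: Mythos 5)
Your proposal is correct and follows essentially the same route the paper intends: the paper offers no written proof beyond deriving (\ref{fund-gl1a}) formally from (\ref{glrelation1-grad})--(\ref{glrelation2-grad}) and the definitions (\ref{alpha-beta}) and asserting that (\ref{fund-gl1}) follows ``easily,'' and your substitution-and-expansion computation, with the identity $\gamma(S_l)\nabla p_c(S_l)=-\nabla\beta(S_l)$, is exactly that argument carried out. Your additional care --- splitting off the one-phase set $\{p_g\leq p_l\}$ where (\ref{glrelation2-grad}) fails and using $\lambda_g(1)=0$ together with the vanishing of gradients of $H^1$ functions on level sets, and flagging that the degenerate products $\lambda_\sigma \Kb\nabla p_\sigma\cdot\nabla p_\sigma$ must be read through the well-defined combinations in (\ref{fund-gl1a}) --- goes beyond what the paper records but is consistent with how the lemma is used there.
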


\begin{lemma}\label{lema-GP-bounds}
 Under assumptions   \Hb{4} and \Hb{8} there exists a constant $C>0$ such that  the following bounds hold true:
 \begin{align}
    p_g^+ \leq |p| + C, \quad |S_l p_l | \leq |p| + C,\quad |S_g p_g | \leq |p| + C,\quad
     p_l  \leq p \leq  \max(p_l, p_g).  
     \label{p_g-p-p_l-est}
 \end{align}
\end{lemma}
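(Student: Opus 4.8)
The plan is to read off all four estimates from the two representations of the global pressure, (\ref{glrelation}) and (\ref{glrelation-g}), combined with the sign of $p_c'$ from \Hb{4} and the boundedness of the auxiliary functions $\overline P$ and $\hat P$ that I expect \Hb{8} to supply. First I would record the elementary sign facts. Since $p_c'<0$, the integrand in (\ref{glrelation}) is nonnegative, so $\overline P(S_l)\ge 0$ for every $S_l\in[0,1]$, whence $p_l\le p$; moreover $\overline P(1)=0$, so on the one-phase set $\{p_g\le p_l\}$, where $S_l=1$, one has $p=p_l=\max(p_l,p_g)$. On the two-phase set $\{p_g\ge p_l\}$ the definition of $\hat P$ gives $\hat P(S_l)\le 0$, and (\ref{glrelation-g}) yields $p\le p_g=\max(p_l,p_g)$. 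Combining the two cases proves the last chain $p_l\le p\le\max(p_l,p_g)$, which I would use as the backbone for the other three bounds. From \Hb{8} I fix a constant $C$ bounding $|\hat P(S_l)|$ and $S_l\,\overline P(S_l)$ on $[0,1]$.

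Next I would treat $p_g^+$ and $|S_g p_g|$, which involve only the $\hat P$-representation. Where $p_g\le 0$ the bound $p_g^+=0\le|p|+C$ is trivial. Where $p_g>0$ I split once more: on $\{p_g\ge p_l\}$ relation (\ref{glrelation-g}) gives $p_g=p-\hat P(S_l)\le|p|+C$, while on $\{p_g<p_l\}$ we are in the one-phase region with $p=p_l>p_g>0$, so $p_g<p=|p|$. For $|S_g p_g|$ I note that on the one-phase set $S_g=0$ annihilates the term, and on $\{p_g\ge p_l\}$ I write $S_g p_g=S_g p-S_g\hat P(S_l)$, so that $0\le S_g\le1$ and $|\hat P|\le C$ give $|S_g p_g|\le S_g|p|+S_g|\hat P|\le|p|+C$.

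Finally I would handle $|S_l p_l|$, the only estimate that genuinely uses the weighted bound on $\overline P$; here the representation $p_l=p-\overline P(S_l)$ from (\ref{glrelation}) is valid on all of $Q_T$, so no case split is needed. The upper bound is immediate from the backbone: $p_l\le p$ and $0\le S_l\le1$ give $S_l p_l\le S_l p\le|p|$. For the lower bound I substitute to get $S_l p_l=S_l p-S_l\overline P(S_l)$, and since $S_l p\ge-|p|$ together with $0\le S_l\,\overline P(S_l)\le C$, this yields $S_l p_l\ge-|p|-C$. I expect the main obstacle to be precisely the boundedness of $S_l\,\overline P(S_l)$ as $S_l\to 0$: because the capillary pressure blows up there, $\overline P(S_l)$ (and hence $p_l$) is itself unbounded, and it is the saturation weight that must compensate. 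Using $p_c(1)=0$ (the extension in \Hb{4}) together with the identity $\overline P-\hat P=p_c$, this reduces to controlling $S_l\,p_c(S_l)$ near $S_l=0$, which is exactly the quantitative input I would draw from \Hb{8}.
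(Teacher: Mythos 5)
Your backbone and your treatment of $p_g^+$ and $|S_g p_g|$ coincide with the paper's own proof: both arguments rest on the sign facts $\overline P\ge 0$ and $\hat P\le 0$ from \Hb{4}, the case split along $\{p_g\ge p_l\}$ versus the one-phase set where $S_l=1$ and $p=p_l$, and the boundedness of $\hat P$ on $(0,1)$. Where you genuinely diverge is the estimate for $|S_l p_l|$: the paper pulls the saturation weight inside the integral, using $S_l\le s$ on $[S_l,1]$ to reduce to $\int_{S_l}^1 \frac{\lambda_g(s)}{\lambda(s)}\, s\,|p_c'(s)|\,ds$, which is finite by \Hb{4} (integration by parts against $M_{p_c}$), whereas you use the identity $\overline P-\hat P=p_c$ to reduce everything to the boundedness of $S_l\,p_c(S_l)$ plus the bound on $\hat P$ that is needed anyway for the other inequalities. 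Your reduction is arguably cleaner, since it reuses the $\hat P$ bound instead of performing a second integral estimate, and both routes ultimately rest on the same integrability of $p_c$.

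However, the two supporting facts you leave as black boxes are asserted with the wrong source, and they are precisely the content the paper actually proves. First, the bound on $S_l\,p_c(S_l)$ is not a consequence of \Hb{8}: that assumption controls $\alpha=-\gamma\,p_c'$ and the modulus of continuity of $\beta^{-1}$, neither of which directly bounds $S\,p_c(S)$. The correct, one-line source is \Hb{4}: since $p_c$ is decreasing, $S\,p_c(S)\le\int_0^S p_c(\sigma)\,d\sigma\le M_{p_c}$. Second, boundedness of $\hat P$ is not supplied by \Hb{8} alone: writing $\frac{\lambda_l}{\lambda}|p_c'|=\alpha\,\sqrt{\lambda_l/(\lambda\lambda_g)}$, the factor $\sqrt{\lambda_l/(\lambda\lambda_g)}$ blows up as $S_l\to 1$ while $\alpha$ merely vanishes, so \Hb{8} gives no control there. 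The paper's argument splits the integral at $S_l=1/2$: on $[0,1/2]$ the integrand is bounded via \Hb{8} and the mobility properties, while on $[1/2,1]$ one uses $\lambda_l/\lambda\le 1$ and $\int_{1/2}^1|p_c'(s)|\,ds=p_c(1/2)<\infty$, which comes from \Hb{4}. Both repairs are immediate from the lemma's stated hypotheses, so your proof is fixable as written; note also that your opening expectation that $\overline P$ itself is bounded is false in general (it inherits the blow-up of $p_c$ at $S_l=0$), which you correctly retract later — but the final attribution of the $S_l\,p_c(S_l)$ bound to \Hb{8} should be replaced by the elementary \Hb{4} monotonicity argument above.
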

\begin{proof}  From (\ref{glrelation-g}) we have  for $p_g \geq p_l$,
\begin{align*}
   p_g^+ \leq |p| - \int_0^{1/2} \frac{\lambda_l(s)}{\lambda(s)}p_c'(s) ds 
                      - \int_{1/2}^1 \frac{\lambda_l(s)}{\lambda(s)}p_c'(s) ds.
\end{align*}
From \Hb{8} it follows that the first integral on the right hand side is bounded and therefore we have
\begin{align*}
   p_g^+ \leq |p| + C + p_c(1/2).
\end{align*}
The same inequality obviously holds also for $p_g < p_l = p$.

From (\ref{glrelation}) we have 
\begin{align*}
  | S_l p_l | \leq |p| + |S_l  \int_{S_l}^1 \frac{\lambda_g(s)}{\lambda(s)}p_c'(s) ds | 
  \leq |p| +  \int_{S_l}^1 \frac{\lambda_g(s)}{\lambda(s)} s | p_c'(s) | ds .
\end{align*}
Due to \Hb{4} the right hand side integral is bounded, which proves the second inequality. 
The third inequality follows in the same way from (\ref{glrelation-g}) and the fact that due to \Hb{4} and \Hb{8}
the function $\hat{P}(S_l)$ is bounded on $(0,1)$.
Finally, the last inequality follows directly from  (\ref{glrelation-g}), (\ref{glrelation}).
This proves the lemma.
\end{proof}

\subsection{Main assumptions}
\label{section-MA}

 \begin{itemize}
     \item[\Hb{1}] The porosity $\Phi$ belongs to $L^{\infty}(\Omega)$, and there exist constants,
	 $\phi_M\geq \phi_m >0$, such that
		 $\phi_m \leq \Phi(x) \leq \phi_M$ a.e. in $\Omega$. The diffusion coefficient $D$ belongs to $L^\infty(\Omega)$, and there 
		 exists a constant $D_0>0$ such that $D(x)\geq D_0$ a.e. in $\Omega$.

     \item[\Hb{2}] The permeability tensor $\Kb$  belongs to $\left(L^{\infty}(\Omega)\right)^{d\times d}$, 
	 and there exist constants $k_M \geq k_m >0$,  such that for almost all $x\in\Omega$ 
	 and all $\boldsymbol{\xi}\in \Rb^d$ it holds:
 \[ k_m |\boldsymbol{\xi}|^2 \leq \Kb(x)\boldsymbol{\xi}\cdot\boldsymbol{\xi}\leq k_M |\boldsymbol{\xi}|^2.\]

 \item[\Hb{3}] Relative mobilities $\lambda_l,\lambda_g$ are defined as $\lambda_l(S_l)= kr_l(S_l)/\mu_l$ 
	 and  $\lambda_g(S_l)= kr_g(S_l)/\mu_g$ where the constants $\mu_l >0$ and $\mu_g >0$
	 are the liquid and the gas viscosities, and $ kr_l(S_l)$, $ kr_g(S_l)$ are the relative permeability
	 functions, satisfying $kr_l,kr_g \in C([0,1])$, $kr_l(0)=0$
      and $kr_g(1)=0$; the function $kr_{l}$ is a non decreasing and $kr_g$ is non increasing function of $S_{l}$.
      Moreover, there exist  constants $kr_m >0$
      such that for all $S_l\in [0,1]$
      \[  kr_m \leq kr_l(S_l) + kr_g(S_l). \]
      We assume also that there exists a constant $a_l > 0$ such that for all $S_l \in [0,1]$:
      \begin{equation}
	      \label{assumpt:kr_l}
	      a_l S_l^2 \leq kr_l(S_l) .
      \end{equation}
 
    \item[\Hb{4}]  The capillary pressure function,  $p_c\in C^1(0,1)\cap C^0( (0,1])$, is strict monotone decreasing function of $S_l\in (0,1]$ satisfying 
      $p_c(1) = 0$,    $p_c(S_l) >0$ for $S_l\in (0,1)$ and ${p}_c'(S_l) \leq -M_0<0 $  for $S_l\in (0,1]$ and some constant $M_0>0$.
      There exists a positive constant $M_{p_c}$  such that
     \begin{align}
		 \int_0^{1}p_c(s)\, ds = M_{p_c} < +\infty. 
     \end{align}
		 The inverse functions $p_c^{-1}$ is extended as $p_c^{-1}(\sigma) = 1$ for $\sigma \leq 0$. 

   \item[\Hb{5}] 
		 The function $\hat u(p_g)$ is strictly increasing $C^1$ function from $[0,+\infty)$
		 to  $[0,+\infty)$ and $\hat{u}(0)=0$.
		 There exist  constants $u_{max} > 0$ and $M_g > 0$  such that
	         for all $\sigma \geq 0$ it holds,
     \begin{align*}
	  | \hat{u} (\sigma) | \leq u_{max},\quad   0 < \hat{u}'(\sigma)\leq M_g.
     \end{align*}
     For $\sigma \leq 0$ we extend $\hat{u}(\sigma)$ as a smooth, sufficiently small, bounded function having global $C^1$ regularity. 
     The main low solubility assumption is that the constant $M_g$ is sufficiently
     small, namely that the inequality (\ref{low_dissolution}) holds.
		 
 \item[\Hb{6}] 
   Function $\hat\rho_g(p_g)$ is a $C^1$ strictly increasing function on $[0,\infty)$, and there exist  constants $\rho_M >0$ and $\rho_g^{max} > 0$ such that for all $p_g\geq 0$ it holds 
     \begin{align*}
	     0 \leq  \hat\rho_g(p_g) \leq \rho_M,  \quad | \hat{\rho}'_g(p_g)| \leq \rho_g^{max}, \quad \hat\rho_g(0) = 0,\quad 
	     \int_0^1 \frac{d\sigma}{\hat{\rho}_g(\sigma) } < \infty.
     \end{align*}
     For $\sigma \leq 0$  we set  $\hat\rho_g(\sigma)=0$ for all $\sigma\leq 0$. 
 \item[\Hb{7}]
  $F_I, F_P \in L^2(Q_T)$  and  $F_I, F_P , p_g^{0} \geq 0$ a.e. in $Q_T$.

 \item[\Hb{8}] The function $\alpha(S_l)$ defined in (\ref{alpha-beta}) satisfy $\alpha\in C^0([0,1])$, $\alpha(0)=\alpha(1) = 0$, and $\alpha(S) >0$ 
	 for $S\in (0,1)$.
	 The inverse of the function $\beta(S_l)$, defined in (\ref{alpha-beta}), is H\"older continuous function of order $\tau \in (0,1)$,
   which can be written as (for some positive constant $C\geq 0$.)
 \begin{align}
 C\left|\int_{S_1}^{S_2}\alpha(s)\, ds\right|^{\tau}\geq |S_1-S_2|.
 \end{align} 
 \item[\Hb{9}] The function $(1-S_l)\hat{P}(S_l)$, where $\hat{P}(S_l)$ is defined in (\ref{glrelation-g}), is H\"older continuous 
   for $S_l\in (0,1)$ with some exponent    $\overline{\tau}\in (0,1]$.
 \end{itemize}


\begin{remark} \label{rem-uro}
Boudnedness of the function $\hat u$ from \Hb{5} is a simplification that is not restrictive since 
$u_{max}$ can take arbitrary large values. The same is true for boundedness of the gas density in \Hb{6}. 
\end{remark}

\begin{remark}
   \label{rem-u}
   The function $\hat u(p_g)$ from \Hb{5} has a physical meaning only for non negative values of the 
   pseudo pressure $p_g$. Regularizations applied in Section~\ref{ss-5.3} destroy minimum principle that 
   enforces $p_g\geq 0$ and therefore we need to extend $\hat u(p_g)$  for negative values of $p_g$ as a smooth function.
   This extension is arbitrary and we take it sufficiently small, such that 
\[
  0 < \rho_l^{std} - {u}_{min}\leq  \rho_l = \rho_l^{std} + \hat u(p_g) \leq \rho_l^{std} + u_{max},
  \]
  for some constant $0< u_{min} <  \rho_l^{std}$ and $u_{min}\leq u_{max}$. For reasons which appear in the proof
  of Lemma~\ref{lemma:diss} we also suppose $u_{min} \leq \rho_l^{std}(1 - 1/\sqrt{2})$.
\end{remark}

\begin{remark}
 Assumptions on H\"older continuity in \Hb{8} and \Hb{9} are needed in the compactness proof in section~\ref{sec:proof-1}.
 Assumption \Hb{8} is usual in the two-phase flow models, while assumption \Hb{9} is fulfilled if $(1-S_l) p_c^\prime(S_l)$
 is an $L^p$ function, for $p>1$, away from $S_l=0$. This assumption is a consequence of \Hb{8} ($\alpha(1) = 0$)
 if, for example,  $kr_g(S_l) \geq C (1-S_l)^\gamma$ for some $0<\gamma < 2$.
\end{remark}
 \begin{lemma}
	 \label{lemma:diss}
	 Let the assumptions  \Hb{1}-\Hb{8} be fulfilled and let $u=\hat{u}(p_g)$ and $\rho_l = \rho_l^{std}+\hat{u}(p_g)$. 
By $z$ we denote the number
\begin{align*}
  z = \min_{0\leq S_l\leq 1} (  kr_g(S_l) + S_l),
\end{align*}
and we suppose that $M_g$ in \Hb{5} is sufficiently small, namely we assume:
 \begin{align}
	 \label{low_dissolution}
	 \frac{\Phi  D}{\rho_l^{std} k_{m}/ \mu_l} 
	 \max\left( \frac{\rho_M}{\rho_l^{std}}\frac{1}{ a_l z},  \sqrt{ \frac{\mu_g}{\mu_l}}\frac{1}{ \sqrt{a_l z}}\right)
	 <  \frac{1}{M_g}. 
\end{align}
	 Then the following inequalities hold: 
\begin{align}
	\label{est:diss-1}
   c_D  |  \nabla u |^2 \leq
   \lambda_g(S_l) \Kb \nabla p_g\cdot  \nabla p_g + S_l \Phi D \frac{1}{\rho_g}\nabla p_g\cdot  \nabla u,\\
	\label{est:diss-1a}
| \frac{1}{\rho_l}\Phi S_l D  \nabla u  \cdot \nabla p_l | \leq 
\frac{1}{2} \lambda_l(S_l) \Kb \nabla p_l\cdot  \nabla p_l + q c_D  |  \nabla u |^2 ,
\end{align}
for some $0< q <1$, where
\begin{align}
	\label{est:diss-2}
    c_D  =  \frac{\Phi^2  D^2 \mu_l}{(\rho_l^{std})^2 k_{m}a_l} .
\end{align}
 \end{lemma}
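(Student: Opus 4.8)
The plan is to reduce both inequalities to pointwise (in $\xb$) scalar inequalities and then to verify that these are guaranteed by the two entries of the maximum in the low-solubility condition (\ref{low_dissolution}).

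For (\ref{est:diss-1}) I would first use the chain rule: since $u=\hat u(p_g)$ with $\hat u\in C^1$, we have $\nabla u=\hat u'(p_g)\nabla p_g$ a.e., so $|\nabla u|^2=(\hat u'(p_g))^2|\nabla p_g|^2$ and $\nabla p_g\cdot\nabla u=\hat u'(p_g)|\nabla p_g|^2\ge 0$ by \Hb{5}. Using $\Kb\nabla p_g\cdot\nabla p_g\ge k_m|\nabla p_g|^2$ from \Hb{2} and dividing out $|\nabla p_g|^2$, the claim becomes the scalar inequality $c_D(\hat u')^2\le \frac{kr_g(S_l)k_m}{\mu_g}+\frac{S_l\Phi D\hat u'}{\rho_g}$. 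Both right-hand terms are nonnegative, and the essential difficulty is that $kr_g$ vanishes at $S_l=1$ while $S_l$ vanishes at $S_l=0$, so neither term may be discarded; this is exactly why the quantity $z=\min_{S_l}(kr_g(S_l)+S_l)>0$ must enter. I would therefore split the target proportionally, proving separately that $\frac{kr_g k_m}{\mu_g}\ge c_D(\hat u')^2\frac{kr_g}{z}$ and $\frac{S_l\Phi D\hat u'}{\rho_g}\ge c_D(\hat u')^2\frac{S_l}{z}$, and then add them, invoking $kr_g(S_l)+S_l\ge z$ to conclude.

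These two split inequalities are precisely the two entries of the maximum. Using $\hat u'\le M_g$ from \Hb{5}, the first reduces (after cancelling $kr_g$) to $c_D M_g^2\le k_m z/\mu_g$, which is the squared form of the second entry of the max; using in addition $\rho_g\le\rho_M$ from \Hb{6}, the second reduces (after cancelling $S_l\hat u'$) to $c_D M_g\rho_M\le\Phi D z$, which is the first entry. A short rewriting of (\ref{low_dissolution}) with $c_D$ from (\ref{est:diss-2}) shows these are exactly what (\ref{low_dissolution}) asserts, so (\ref{est:diss-1}) follows.

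For (\ref{est:diss-1a}) the mechanism is a weighted Young inequality. Bounding $|\frac{1}{\rho_l}\Phi S_l D\nabla u\cdot\nabla p_l|\le\frac{\Phi S_l D}{\rho_l}|\nabla u|\,|\nabla p_l|$ by Cauchy--Schwarz and applying $XY\le\frac{1}{2t}X^2+\frac{t}{2}Y^2$ with $t=\mu_l/(kr_l(S_l)k_m)$ chosen so that the $|\nabla p_l|^2$ part is exactly $\frac12\frac{kr_l}{\mu_l}k_m|\nabla p_l|^2\le\frac12\lambda_l\Kb\nabla p_l\cdot\nabla p_l$, the coefficient of $|\nabla u|^2$ becomes $\frac{\mu_l\Phi^2 S_l^2 D^2}{2kr_l k_m\rho_l^2}$. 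I would then show this is $\le q c_D$ with $q<1$: by \Hb{3} one has $a_l S_l^2\le kr_l$, so the coefficient is at most $c_D\frac{(\rho_l^{std})^2}{2\rho_l^2}$, and by the lower bound $\rho_l\ge\rho_l^{std}/\sqrt2$ coming from $u_{min}\le\rho_l^{std}(1-1/\sqrt2)$ in Remark~\ref{rem-u} this is at most $c_D$; taking the extension of $\hat u$ (equivalently $u_{min}$) strictly small makes the estimate strict and yields an admissible $q\in(0,1)$. The main obstacle, and the genuinely new ingredient, is the proportional splitting in (\ref{est:diss-1}), since the two-part shape of the hypothesis (\ref{low_dissolution}) is dictated by exactly this splitting; (\ref{est:diss-1a}) is routine once the sharp bound $\rho_l\ge\rho_l^{std}/\sqrt2$ is in hand, which is the sole purpose of the normalization recorded in Remark~\ref{rem-u}.
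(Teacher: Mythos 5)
Your proposal is correct and follows essentially the same route as the paper's own proof: your ``proportional splitting'' with weights $kr_g/z$ and $S_l/z$ is algebraically identical to the paper's step $a\,kr_g(S_l)+b\,S_l\geq z\min(a,b)$ with $a=k_m/(\mu_g M_g^2)$, $b=\Phi D/(\rho_M M_g)$, and your two split conditions are exactly the paper's intermediate inequality (\ref{low_dissolution-0}), which you correctly recognize as a direct rewriting of (\ref{low_dissolution}). Your proof of (\ref{est:diss-1a}) is likewise the paper's argument verbatim in substance: the same weighted Young inequality (the paper's weight $k_m\lambda_l(S_l)$ equals your $kr_l(S_l)k_m/\mu_l$), the same use of $a_lS_l^2\leq kr_l(S_l)$ from (\ref{assumpt:kr_l}), and the same lower bound $\rho_l\geq \rho_l^{std}(1-\varepsilon)$ with $\varepsilon<1-1/\sqrt{2}$ from Remark~\ref{rem-u}, yielding $q=0.5/(1-\varepsilon)^2<1$.
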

 \begin{proof} We have 
\begin{align*}
\lambda_g(S_l) \Kb \nabla p_g\cdot  \nabla p_g + S_l \Phi D \frac{1}{\rho_g}\nabla p_g\cdot  \nabla u
\geq  \left(\frac{ 1 }{\mu_g \hat{u}'(p_g)^2}  kr_g(S_l) k_{m}
    + S_l\Phi  D \frac{1}{\rho_g \hat{u}'(p_g)} \right)  |\nabla u|^2,
\end{align*}
from where it follows:
\begin{align*}
\lambda_g(S_l) \Kb \nabla p_g\cdot  \nabla p_g + S_l \Phi D \frac{1}{\rho_g}\nabla p_g\cdot  \nabla u
\geq z \min\left(\frac{  k_{m} }{\mu_g M_g^2}, \frac{  \Phi  D }{\rho_M M_g} \right)
 |\nabla u|^2.
\end{align*}
Estimate (\ref{est:diss-1}) and (\ref{est:diss-2}) follow immediately from  
\begin{align}
 \label{low_dissolution-0}
 \frac{\Phi^2  D^2 \mu_l}{(\rho_l^{std})^2 k_{m}a_l} \leq
z \min\left(\frac{ k_{m} }{\mu_g M_g^2} ,  \frac{\Phi  D}{\rho_M M_g} \right).
\end{align}
It is easy to show that  (\ref{low_dissolution-0})  follows from  (\ref{low_dissolution}) and the
fact that $a_l$ can be taken arbitrary small, such that $a_lz \leq 1$; this proves (\ref{est:diss-1}).

To prove (\ref{est:diss-1a}) we note that since the extension of the function $\hat{u}$ into negative pseudo-pressures
can be taken arbitrary small, we have $\rho_l \geq \rho_l^{std}(1-\varepsilon)$, for $0 <\varepsilon = u_{min}/\rho_l^{std} <1-1/\sqrt{2}$
(see Remark~\ref{rem-u}). Therefore we can estimate
\begin{align*}
	| \frac{1}{\rho_l}\Phi S_l D  \nabla u  \cdot \nabla p_l | & \leq 
	| \frac{1}{\rho_l^{std} (1-\varepsilon)\sqrt{k_{m} \lambda_l(S_l)}}\Phi S_l D \sqrt{k_{m} \lambda_l(S_l)} \hat{u}'(p_g) \nabla p_g \cdot \nabla p_l |\\
   & \leq  \frac{1}{2} k_{m} \lambda_l(S_l) \nabla p_l \cdot \nabla p_l
	+   \frac{1}{2 (1-\varepsilon)^2}\frac{\Phi^2 S_l^2 D^2}{(\rho_l^{std})^2 k_{m} \lambda_l(S_l)} \hat{u}'(p_g)^2 \nabla p_g \cdot \nabla p_g\\
   &\leq \frac{1}{2} \lambda_l(S_l) \Kb \nabla p_l\cdot  \nabla p_l +q c_D |  \nabla u |^2,
\end{align*}
where in the last step we have used (\ref{assumpt:kr_l}), and $q =0.5/(1-\varepsilon)^2 < 1$.
Lemma~\ref{lemma:diss} is proved.
\end{proof}

\begin{remark} 
	Exact meaning of the low solubility hypothesis is given by (\ref{low_dissolution}). 
The {\rm solubility bound} $M_g$ must be small enough so that $1/M_g$ is larger than a ratio of diffusivity $\Phi  D$
and hydraulic conductivity $\rho_l^{std} k_{m}/ \mu_l$ multiplied by generally small
	non-dimensional factor. 
\end{remark}

\begin{remark} 
If we take as an example the  flow of water and hydrogen modeled by the Henry law,
	$\hat u(p_g)= H(T)M^h p_g,$
we can check that the inequality (\ref{low_dissolution}) is realistic. Some typical values for corresponding  parameters
 (at $T=303 K$) are the following: $H(T) = 7.65\cdot 10^{-6}$ $\mathrm{mol/m^2 Pa}$,  $\rho_l^{std} = 10^3$  $\mathrm{kg/ m^3}$,
 $M^h=2\cdot 10^{-3}$  $\mathrm{kg/mol}$, $\mu_l = 10^{-3}$ and $\mu_g = 6\cdot 10^{-6}$  $\mathrm{Pa\cdot s}$, $k_m = 10^{-19}$
 $\mathrm{m^2}$, $\Phi=0.1$, $D = 3\cdot 10^{-9}$   $\mathrm{m^2/s}$, $a_l = 1$ and $z=0.1$. 
 {With these values of the parameters} we get that $1/M_g$ should be bigger than $3\cdot 10^4$, while 
 $M_g=H(T)M^h = 15.3 \cdot 10^{-9}$ (see \cite{AAJ}). 
\end{remark}

\section{Existence theorem}
\label{existence}

Let us recall that the primary variables are $p_l$ and $p_g$. The secondary variables are the global pressure 
$p$ defined by (\ref{glrelation}) 
and the functions $u$, $\rho_g$, $S_l$ and $S_g$ defined as $u=\hat{u}(p_g)$, 
$\rho_g=\hat{\rho}_g(p_g)$, $S_l = p_c^{-1}(p_g - p_l)$ and $S_g = 1-S_l$.
By \Hb{5} and \Hb{6} the functions $u$ and $\rho_g$ are bounded and for $S_l$, due to \Hb{4}, we have 
\begin{align}
	\label{Sl-minmax}
	0< S_l \leq 1.
\end{align}
 
Variational formulation is obtained by standard arguments. Taking test functions $\varphi,\psi\in C^1([0,T], V)$  where 
\[ V=\{\varphi\in H^1(\Omega)\colon \varphi = 0 \text{ on } \Gamma_D\}\]
we get: 

\begin{theorem}\label{TM1}
    Let  \Hb{1}-\Hb{8} hold true
    and assume $(p_{l}^0, p_g^{0})\in L^2(\Omega)\times L^2(\Omega)$, {$p_g^0\geq 0$ a.e.\ in $\Omega$}.   
Then there exist functions $p_l$ and $p_g$ satisfying  
\begin{align*}
	p_l, p_g\in L^2(Q_T),\quad   p, u, \beta(S_l)\in  L^2(0,T;V), \\
  \Phi  \partial_t (u S_l+ \rho_g S_g),\;\Phi  \partial_t S_l \in  L^2(0,T; V'),
   \end{align*}
such that: for all $\varphi\in L^2(0,T; V)$
\begin{equation}
  \label{VF-1}
  \begin{split}
 \int_0^T \langle \Phi\dert{S_l},\varphi\rangle dt 
 & +\int_{Q_T} [ \lambda_l(S_l)\Kb\nabla p_l - \Phi S_l \frac{1}{\rho_l} D  \nabla u]  \cdot \nabla \varphi dx dt\\
 & +\int_{Q_T} S_l F_P \varphi dx dt     = \int_{Q_T} F_I \varphi dx dt   +
     \int_{Q_T} \rho_l\lambda_l(S_l)\Kb \gb \cdot \nabla \varphi dx dt; 
   \end{split}
\end{equation}
	 for all $\psi\in L^2(0,T; V)$  
\begin{equation}
  \label{VF-2}
  \begin{split}
   & \int_0^T \langle \Phi\dert{}\left(   u S_l +   \rho_g S_g\right),\psi\rangle dt\\
   & +\int_{Q_T}[  u \lambda_l(S_l)\Kb  \nabla p_l +  \rho_g  \lambda_g(S_l) \Kb \nabla p_g  
    + \Phi S_l \frac{\rho_l^{std}}{\rho_l} D  \nabla u ]  \cdot \nabla \psi dx dt\\
  & +\int_{Q_T} \left(   u S_l +   \rho_g S_g\right) F_P\psi dx dt 
  =\int_{Q_T}\left(  \rho_l u \lambda_l(S_l)+  \rho_g^2  \lambda_g(S_l) \right) \Kb\gb  \cdot \nabla \psi dx dt.
   \end{split}
\end{equation}
Furthermore, for all $\psi\in V$ the functions 
\[ t\mapsto \int_\Omega \Phi S_l \psi dx ,\quad
t\mapsto \int_\Omega \Phi ((u - \rho_g)S_l+\rho_g) \psi dx 
\]
are continuous in $[0,T]$ and the initial condition is satisfied in the following sense:
\begin{equation} \label{VF-3}
\left( \int_\Omega \Phi S_l \psi dx\right)(0) = 
\int_\Omega \Phi s_0\psi dx,
\end{equation}
\begin{equation} \label{VF-4}
\left( \int_\Omega \Phi (u S_l + \rho_g S_g) \psi dx\right)(0) = 
\int_\Omega \Phi (\hat{u}(p_g^0)s_0+\hat\rho_g(p_g^0)(1-s_0))\psi dx,
\end{equation}
for all $\psi\in V$, where $s_0=p_c^{-1}(p_g^0-p_l^0)$.
\end{theorem}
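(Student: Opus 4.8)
The plan is to construct the solution by the three-level approximation announced in the introduction: regularize, time-discretize, and then remove the parameters. First I would add a small uniformly elliptic perturbation so that the diffusion no longer vanishes where $S_l$ or $\rho_g$ degenerate, and discretize the time derivatives in (\ref{eq-1B})--(\ref{eq-2B}) by a backward Euler scheme with step $h=T/N$; this produces a finite sequence of quasilinear elliptic systems for the pressure pair at each time level. For a single time step I would solve the elliptic system by the Schauder fixed point theorem: freeze the secondary variables $S_l,u,\rho_g,\rho_l$ computed through (\ref{second}) from a given pressure pair, solve the resulting linear problem in $V\times V$, and show that the solution map is continuous and compact and maps a suitable ball into itself, the invariance of the ball being furnished by the energy estimate below.

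The heart of the argument is that energy estimate. I would test the (regularized, time-discrete) liquid equation with $p_l$ and the hydrogen equation with the primitive $\Psi_g(p_g)=\int_0^{p_g}\hat\rho_g(s)^{-1}\,ds$, which is finite by the integrability condition in \Hb{6} and satisfies $\nabla\Psi_g(p_g)=\hat\rho_g(p_g)^{-1}\nabla p_g$, so that the gas Darcy term $\rho_g\lambda_g(S_l)\Kb\nabla p_g$ contributes exactly $\lambda_g(S_l)\Kb\nabla p_g\cdot\nabla p_g$. Summing the two and invoking the fundamental equality of Lemma~\ref{lema-fund-gl1} turns the two phase Darcy energies into the coercive quantity $\lambda(S_l)\Kb\nabla p\cdot\nabla p+\Kb\nabla\beta(S_l)\cdot\nabla\beta(S_l)$, which controls $\nabla p$ and $\nabla\beta(S_l)$ in $L^2$. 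The genuinely dangerous terms are those carrying $\nabla u$, multiplied by the possibly small factor $S_l$; this is exactly where the low solubility hypothesis enters through Lemma~\ref{lemma:diss}. Indeed, the test function $\Psi_g$ turns the diffusion term of the hydrogen equation into precisely the combination on the right of (\ref{est:diss-1}), which manufactures a coercive $c_D|\nabla u|^2$, while (\ref{est:diss-1a}) absorbs the $\nabla u$ term of the liquid equation into half of the liquid Darcy energy plus $q\,c_D|\nabla u|^2$ with $q<1$, leaving a strictly positive residual of $c_D|\nabla u|^2$. Combined with Lemma~\ref{lema-GP-bounds} and \Hb{1}--\Hb{8}, this gives a bound --- uniform in $h$ and in the regularization --- for $p$, $u$ and $\beta(S_l)$ in $L^2(0,T;V)$; testing the accumulation terms against the same functions and summing over time steps yields, through a discrete convexity (entropy) inequality, a bounded energy at the final time and hence bounds for $\Phi\partial_t S_l$ and $\Phi\partial_t(uS_l+\rho_g S_g)$ in $L^2(0,T;V')$.

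With these uniform bounds I would form the piecewise constant and piecewise affine time interpolants and extract subsequences converging weakly for $\nabla p$, $\nabla\beta(S_l)$ and $\nabla u$ as the parameters go to zero. The decisive point is to upgrade the saturation to strong convergence: from $\beta(S_l)$ bounded in $L^2(0,T;V)$ (space regularity) and $\Phi\partial_t S_l$ bounded in $L^2(0,T;V')$ (time regularity), a nonlinear Aubin--Lions/Alt--Luckhaus compactness argument together with the H\"older continuity of $\beta^{-1}$ from \Hb{8} (and \Hb{9}) gives $S_l\to S_l$ and $u\to u$ strongly in $L^2(Q_T)$, whence $\rho_g$, $\rho_l$ and all mobilities converge strongly by continuity. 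Crucially, the flux combinations that actually occur, $\lambda_l(S_l)\nabla p_l=\lambda_l(S_l)\nabla p+\gamma(S_l)\nabla\beta(S_l)$ and $\lambda_g(S_l)\nabla p_g=\lambda_g(S_l)\nabla p-\gamma(S_l)\nabla\beta(S_l)$ from (\ref{fund-gl1a}), are products of a strongly convergent coefficient with a weakly convergent gradient, and therefore pass to the limit; so do the diffusion flux $\Phi S_l\rho_l^{-1}D\nabla u$ and the accumulation densities $\Phi S_l$ and $\Phi(uS_l+\rho_g S_g)$. This identifies the limit as a weak solution of (\ref{VF-1})--(\ref{VF-2}). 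The initial conditions (\ref{VF-3})--(\ref{VF-4}) then follow because the $V'$ bounds on the time derivatives render $t\mapsto\int_\Omega\Phi S_l\psi\,dx$ and $t\mapsto\int_\Omega\Phi(uS_l+\rho_g S_g)\psi\,dx$ continuous on $[0,T]$, so the datum $s_0=p_c^{-1}(p_g^0-p_l^0)$ prescribed in the discretization is inherited in the limit.

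I expect the difficulty to concentrate in two places. First, closing the energy estimate requires the $\nabla u$ terms to be genuinely absorbed rather than merely bounded; this is the quantitative content of the smallness condition (\ref{low_dissolution}), and the strict inequality $q<1$ in (\ref{est:diss-1a}) is what leaves room to retain a coercive share of $c_D|\nabla u|^2$ after the absorption. Second, producing strong compactness of $S_l$ from control of $\beta(S_l)$ rather than of $S_l$ itself is delicate, precisely because time regularity is available only for the accumulation variables and only in $V'$; reconciling the space regularity of $\beta(S_l)$ with the weak time regularity of $\Phi S_l$ through the H\"older bounds of \Hb{8}--\Hb{9} is the most technical step of the whole argument.
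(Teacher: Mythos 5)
Your overall architecture (regularize, backward Euler, Schauder/Leray--Schauder at each time level, entropy-type energy estimate, translation/compactness via \Hb{8}--\Hb{9}, passage to the limit using (\ref{fund-gl1a})) is the paper's architecture. But there is a genuine gap at the heart of the argument: your choice of test functions. You test the liquid equation with $p_l$ and the hydrogen equation with $\Psi_g(p_g)=M(p_g)$, whereas the paper's proof requires the pair $\varphi=p_l-N(p_g)$, $\psi=M(p_g)$ with $N(p_g)=\int_0^{p_g^+}\hat u(\sigma)/\hat\rho_g(\sigma)\,d\sigma$ from (\ref{test-fun-def}), and the correction $-N(p_g)$ is not cosmetic. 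First, the hydrogen equation carries the advective flux $u\,\lambda_l(S_l)\Kb\nabla p_l$; tested against $M(p_g)$ it produces the cross term $\tfrac{u}{\rho_g}\lambda_l(S_l)\Kb\nabla p_l\cdot\nabla p_g$, which your scheme leaves uncancelled and which cannot be absorbed: Cauchy--Schwarz would charge it to the coercive gas term $\lambda_g(S_l)\Kb\nabla p_g\cdot\nabla p_g$ at the price of the factor $(u/\rho_g)^2\lambda_l/\lambda_g$, which is unbounded as $S_l\to 1$ (where $\lambda_g\to 0$ while $\lambda_l$ stays of order one), and \Hb{5}--\Hb{6} do not even guarantee $\hat u/\hat\rho_g$ bounded near $p_g=0$. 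In the paper this term vanishes identically, because $\nabla\varphi=\nabla p_l-\tfrac{u}{\rho_g}\nabla p_g$ makes the liquid Darcy term generate its exact negative (the ``cancellation of four terms'' in Section 5). Second, the accumulation terms: the identity (\ref{TSTF-1}), $\partial_t S_l\,(p_l-N(p_g))+\partial_t(uS_l+\rho_gS_g)\,M(p_g)=\partial_t\mathcal{E}(p_l,p_g)$, holds precisely for this pair, and $\mathcal{E}$ of (\ref{E-final}) is bounded below by $-M_{p_c}$ (Lemma~\ref{lema-E-est}); with your pair you obtain instead $\partial_t\mathcal{E}+N(p_g)\,\partial_t S_l$, and the extra term has no sign and is not an exact time derivative, so the discrete convexity/telescoping inequality you invoke to bound the energy at the final time does not close.

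Even your diffusion bookkeeping silently uses $N$: the hydrogen diffusion flux is $\Phi S_l\tfrac{\rho_l^{std}}{\rho_l}D\nabla u$, so testing with $M(p_g)$ yields the combination in (\ref{est:diss-1}) only after adding the contribution of $-\Phi S_l\rho_l^{-1}D\nabla u\cdot\nabla N(p_g)$ from the liquid equation, via $\rho_l^{std}/\rho_l+u/\rho_l=1$; without it Lemma~\ref{lemma:diss} does not apply ``precisely'' as you claim (this piece alone could be repaired by adjusting constants, unlike the two failures above). Two smaller omissions: the fixed-point map needs the nondegenerate regularizations $\lambda_l^\varepsilon$, $\hat\rho_g^\varepsilon$ and the spectral projection ${\EuScript P}_N$ to be well defined (your ``uniformly elliptic perturbation'' covers only part of this), and one must prove $p_g\geq 0$ at the discrete level (Lemma~\ref{lemma-pos}) before the test functions $M,N$ behave as intended. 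The compactness step you describe via Aubin--Lions on $\beta(S_l)$ is a plausible variant, but note the paper instead uses the $\eta$-term (which puts $p_c(S^\eta)$ in $H^1$) together with spatial translation estimates and the H\"older hypotheses \Hb{8}--\Hb{9}, including the decomposition $(1-S^\eta)\hat\rho_g(p^\eta-\hat P(S^\eta))$ for the compactness of $r_g^\eta$, which your sketch does not reach.
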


The first step in proving correctness of the proposed model for two-phase compositional flow is to show that 
the weak solution defined in Theorem~\ref{TM1} satisfy $p_g \geq 0$ a.e.\ in $Q_T$,
if the initial and the boundary conditions satisfy 
corresponding inequality. 

\begin{lemma}
  \label{pozitivity}
		Let $p_l$ and $p_g$ are given by Theorem~\ref{TM1}. Then, $p_g \geq 0$ a.e. in $Q_T$.
\end{lemma}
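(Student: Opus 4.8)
The plan is to prove that the negative part of $p_g$ vanishes by an energy argument localized on the set $\{p_g<0\}$, working throughout with $u$ rather than with $p_g$ (the latter is only known to lie in $L^2(Q_T)$, whereas $u=\hat u(p_g)\in L^2(0,T;V)$). Two structural facts make the set $\{p_g<0\}$ special. First, by \Hb{6} the gas density $\hat\rho_g$ vanishes identically for $p_g\le 0$, so $\rho_g=0$ a.e.\ on $\{p_g<0\}$, which kills both the storage term $\rho_g S_g$ and the gas Darcy flux $\rho_g\lambda_g(S_l)\Kb\nabla p_g$ there. Second, by \Hb{5} (with $\hat u$ strictly increasing across the origin and $\hat u(0)=0$) we have $\{p_g<0\}=\{u<0\}$. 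Since $p_g=0$ on $\Gamma_D$ forces $u=\hat u(0)=0$ on $\Gamma_D$, the negative part $u^-=\max(-u,0)$ and the product $u\,u^-$ are bounded $H^1$ functions vanishing on $\Gamma_D$, hence admissible test functions in $L^2(0,T;V)$.

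Next I would test the hydrogen equation (\ref{VF-2}) with $\psi=-u^-$ and the liquid equation (\ref{VF-1}) with $u\psi=-u\,u^-$, and subtract the second identity from the first. The purpose of this combination is that, using $\rho_l=\rho_l^{std}+u$, the leading Darcy flux $\lambda_l(S_l)\Kb\nabla p_l$ paired with $\nabla\psi$ cancels: the clean second-order part reduces on $\{p_g<0\}$ (where $\rho_g=0$) to the coercive diffusion term $\int_{\{p_g<0\}}\Phi S_l D\,|\nabla u|^2\ge 0$. The product rule applied to $\nabla(u\psi)$ produces a residual cross term of the type $u\,\lambda_l(S_l)\Kb\nabla p_l\cdot\nabla u$ together with a lower-order $u\,\Phi S_l\frac{1}{\rho_l}D|\nabla u|^2$ contribution. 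Because $\hat u$ is extended so that $|u|$ is small on $\{p_g<0\}$ (Remark~\ref{rem-u}), these residual terms are exactly of the form controlled by the low-solubility estimates (\ref{est:diss-1})--(\ref{est:diss-1a}) of Lemma~\ref{lemma:diss}, so that after absorption the entire second-order contribution is nonnegative up to quantities dominated by the coercive diffusion. The source terms, evaluated on $\{p_g<0\}$, generate factors such as $u^2 S_l F_P\ge 0$ and are handled using $F_I,F_P\ge 0$ from \Hb{7} together with Gronwall.

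It then remains to treat the two time-derivative pairings $\langle\Phi\partial_t(uS_l+\rho_gS_g),-u^-\rangle$ and $\langle\Phi\partial_t S_l,-u\,u^-\rangle$. The aim is to identify their difference, after the cancellations above, with $\tfrac{d}{dt}\int_\Omega\Phi\,\mathcal H\,dx$ for a nonnegative primitive $\mathcal H$ supported on $\{p_g<0\}$ and equal to leading order to $\tfrac12 S_l (u^-)^2$. Since $p_g^0\ge 0$ gives $(u^0)^-=0$, the initial terms (\ref{VF-3})--(\ref{VF-4}) drop out, and one arrives at a differential inequality $\tfrac{d}{dt}\int_\Omega\Phi\,\mathcal H\,dx + (\text{nonnegative})\le C\int_\Omega\Phi\,\mathcal H\,dx$ with $\int_\Omega\Phi\,\mathcal H\,dx$ vanishing at $t=0$; Gronwall's inequality then forces $\mathcal H\equiv 0$, i.e.\ $u^-\equiv 0$, hence $u\ge 0$ and $p_g\ge 0$ a.e.\ in $Q_T$.

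The main obstacle is precisely this chain rule in time: the storage terms are nonlinear and saturation-degenerate, the time derivatives are only known in $L^2(0,T;V')$, and the chosen test functions $u^-$ and $u\,u^-$ are themselves nonlinear functions of the unknown, so the naive integration by parts is not licensed. Justifying the passage to $\tfrac{d}{dt}\int_\Omega\Phi\,\mathcal H$ will require a regularization in time (Steklov averaging or an Alt--Luckhaus/Mignot type lemma), together with careful bookkeeping of the extra terms produced by the time dependence of $S_l$, which must be shown to cancel against the corresponding terms coming from the liquid equation. The secondary difficulty, controlling the residual $\nabla p_l$ cross term without a free coercive $\lambda_l(S_l)\Kb\nabla p_l\cdot\nabla p_l$ bound, is what makes the smallness of $u$ on $\{p_g<0\}$ and the low-solubility hypothesis of Lemma~\ref{lemma:diss} indispensable at this step.
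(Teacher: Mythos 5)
Your overall strategy is the paper's strategy: the paper proves this by testing \eqref{VF-1} with $\varphi=\frac12\bigl(\min(\hat u(p_g),0)\bigr)^2$ and \eqref{VF-2} with $\psi=\min(\hat u(p_g),0)$, exploiting exactly the two structural facts you identify ($\hat\rho_g\equiv 0$ on $\{p_g\le 0\}$ and $\{p_g<0\}=\{u<0\}$), and it sidesteps your chain-rule-in-time worry by carrying out the argument at the discrete-in-time level (Lemma~\ref{lemma-pos}), where the accumulation terms are difference quotients and need no Alt--Luckhaus machinery. So your instinct about where the real analytic difficulty sits, and your proposed remedy (time discretization/Steklov averaging), are both consistent with what the paper actually does.

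However, there is a genuine quantitative gap in your choice of liquid test function, and it breaks the proof. Writing $X=\min(u,0)=-u^-$, you test the liquid equation with $u\psi=-u\,u^-$, which equals $X^2$ on $\{u<0\}$, so $\nabla\varphi=2X\nabla X$ there; the paper's choice is $X^2/2$, with $\nabla\varphi=X\nabla X=u\nabla X$ on the support of $\nabla X$. With the factor $\frac12$ the cancellations are \emph{exact and algebraic}: the liquid Darcy term $\lambda_l(S_l)\Kb\nabla p_l\cdot X\nabla X$ cancels against the term $u\,\lambda_l(S_l)\Kb\nabla p_l\cdot\nabla X$ of the gas equation, the gravity terms cancel likewise, the two diffusion terms sum (via $\rho_l=\rho_l^{std}+u$) to the clean coercive quantity $\Phi S_l D|\nabla X|^2$, and the accumulation terms combine into the perfect derivative of $\Phi S_l X^2/2$ plus sign-definite pieces. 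With your factor, none of this happens: you are left with the Darcy residual $-X\lambda_l(S_l)\Kb\nabla p_l\cdot\nabla X$, a gravity residual, and the time leftover $-\frac{X^2}{2}\partial_t S_l$ (with $\partial_t S_l$ only in $L^2(0,T;H^{-1}(\Omega))$, and no sign). Your proposed rescue via Lemma~\ref{lemma:diss} cannot work: estimate \eqref{est:diss-1a} places $\frac12\lambda_l(S_l)\Kb\nabla p_l\cdot\nabla p_l$ on the majorant side, i.e.\ it is designed to absorb a cross term into a coercive $\lambda_l|\nabla p_l|^2$ term that is \emph{present} in the energy identity --- but your tested combination produces no such term; the only coercivity available is $\Phi S_l D|\nabla X|^2$, with the degenerate weight $S_l$. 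After Young's inequality the residual leaves a quantity of the form $\int_{\{u<0\}}\lambda_l|\nabla p_l|^2$ (times bounded factors), which is merely $L^1(Q_T)$ by \eqref{bae1} and is not dominated by your Gronwall functional $\int_\Omega\Phi\,\mathcal H\,dx$; smallness of $u_{min}$ shrinks its coefficient but does not make Gronwall close. The fix is simply to take $\varphi=X^2/2$: then everything cancels identically, and --- contrary to your closing claim --- no low-solubility hypothesis is needed anywhere in this lemma.
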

Lemma~\ref{pozitivity} can be proved by standard technique using  test function
 $\varphi = (\min(\hat{u}(p_g),0))^2/2$ in equation  (\ref{VF-1}) and 
the function $\psi=\min(\hat{u}(p_g),0)$ in equation  (\ref{VF-2}). 
The proof is omitted here since it will be given in the discrete case in Lemma~\ref{lemma-pos}.

The proof of  Theorem~\ref{TM1} is based on an energy estimate obtained by the use of test functions 
\begin{align*}
    \varphi =  p_l-N(p_g) ,\quad \psi = M(p_g),
\end{align*}
with
\begin{equation}\label{test-fun-def}
M(p_g)=\int_0^{p_g^+}\frac{1}{\hat\rho_g(\sigma)}\, d\sigma\quad 
N(p_g)=\int_0^{p_g^+}\frac{\hat{u}(\sigma)}{\hat\rho_g(\sigma)}d\sigma.
\end{equation}
It is assumed that the functions $M$ and $N$ are extended by zero for negative pressures. 
For $M$ and $N$ we have the following bounds:
\begin{lemma}\label{lema-test-fun}
  Functions (\ref{test-fun-def}) satisfy
    \begin{align}
	| N(p_g) | \leq \hat{C}_g u_{max} (p_g^+ +1), \quad
	| M(p_g) | \leq  \hat{C}_g(p_g^+ +1) ,
	\label{M-N-bounds}
\end{align}
where $\hat{C}_g = \max( \int_0^1 d\sigma/\hat{\rho}_g(\sigma), 1/\hat{\rho}_g(1))$. 
   \end{lemma}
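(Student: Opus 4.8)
The plan is to establish the bound on $M$ first and then deduce the bound on $N$ as an immediate corollary, since the integrand of $N$ is dominated by $u_{max}$ times the integrand of $M$. Observe first that because $M$ and $N$ are extended by zero for $p_g\le 0$ and both integrals in (\ref{test-fun-def}) run over $[0,p_g^+]$ with nonnegative integrands on $(0,\infty)$ (indeed $\hat\rho_g>0$ and $\hat u\ge 0$ there by \Hb{5}--\Hb{6}), both functions are nonnegative. Hence $|M(p_g)|=M(p_g)$ and $|N(p_g)|=N(p_g)$, and it suffices to bound them from above; moreover the case $p_g^+=0$ is trivial, so I may assume $p_g^+>0$ throughout.

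For $M$ I would split the integral at $\sigma=1$. On the initial piece, the integrability hypothesis $\int_0^1 d\sigma/\hat\rho_g(\sigma)<\infty$ from \Hb{6} gives $\int_0^{\min(p_g^+,1)} d\sigma/\hat\rho_g(\sigma)\le \int_0^1 d\sigma/\hat\rho_g(\sigma)\le \hat{C}_g$ by the definition of $\hat{C}_g$. On the tail $[1,p_g^+]$, which is present only when $p_g^+>1$, monotonicity of $\hat\rho_g$ from \Hb{6} yields $\hat\rho_g(\sigma)\ge\hat\rho_g(1)$, whence $\int_1^{p_g^+} d\sigma/\hat\rho_g(\sigma)\le (p_g^+-1)/\hat\rho_g(1)\le \hat{C}_g\,p_g^+$. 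Adding the two contributions gives $M(p_g)\le \hat{C}_g(p_g^++1)$ in the case $p_g^+>1$, while for $0<p_g^+\le 1$ the single piece already gives $M(p_g)\le \hat{C}_g\le \hat{C}_g(p_g^++1)$. This is the desired estimate on $M$.

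For $N$ I would invoke the bound $0\le \hat u(\sigma)\le u_{max}$ from \Hb{5}, which produces the pointwise domination $\hat u(\sigma)/\hat\rho_g(\sigma)\le u_{max}/\hat\rho_g(\sigma)$ on $(0,\infty)$, so that $N(p_g)\le u_{max}\,M(p_g)$; combining this with the bound on $M$ just obtained yields $N(p_g)\le \hat{C}_g u_{max}(p_g^++1)$. The only point requiring care is the integrability of the integrands near $\sigma=0$, where both $\hat\rho_g(0)=0$ and $\hat u(0)=0$; this singularity is controlled precisely by the hypothesis $\int_0^1 d\sigma/\hat\rho_g(\sigma)<\infty$ in \Hb{6}, which simultaneously makes $M$ finite and, through the domination above, makes $N$ finite as well. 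Apart from this, the argument is an elementary two-piece estimate and presents no genuine obstacle.
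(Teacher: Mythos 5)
Your proposal is correct and follows essentially the same route as the paper: split the integral at $\sigma=1$, use the integrability hypothesis $\int_0^1 d\sigma/\hat\rho_g(\sigma)<\infty$ on the first piece and monotonicity $\hat\rho_g(\sigma)\geq\hat\rho_g(1)$ on the tail, together with $0\leq\hat u\leq u_{max}$. The only cosmetic difference is that you bound $M$ first and deduce $N\leq u_{max}M$ by pointwise domination, whereas the paper estimates both integrals in parallel --- the underlying inequalities are identical.
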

\begin{proof}
Due to \Hb{5} and \Hb{6} we have:
\begin{align*}
	| N(p_g) | &\leq \int_0^{1} \frac{\hat{u}(\sigma)}{\hat{\rho}_g(\sigma)}\, d\sigma 
	+ \int_1^{\max(p_g^+,1)} \frac{\hat{u}(\sigma)}{\hat{\rho}_g(\sigma)}\, d\sigma 
	 \leq u_{max} \left( \int_0^{1} \frac{1}{\hat{\rho}_g(\sigma)}\, d\sigma + \frac{1}{\hat\rho_g(1)}p_g^+ \right) \\
	| M(p_g) | &\leq \int_0^{1}\frac{1}{\hat\rho_g(\sigma)}\, d\sigma  
	+\int_1^{\max(p_g^+,1)}\frac{1}{\hat\rho_g(\sigma)}\, d\sigma 
	 \leq \int_0^{1}\frac{1}{\hat\rho_g(\sigma)}\, d\sigma  +\frac{1}{\hat\rho_g(1)} p_g^+.
\end{align*}
Lemma~\ref{lema-test-fun} is proved.
\end{proof}

The key property of the test functions $p_l-N(p_g)$ and $M(p_g)$ is given by the following relation 
\begin{align}
 \dert{S_l} (p_l-N(p_g))  +  \dert{}\left( u S_l +   \rho_g S_g\right)M(p_g)
=  \dert{}{\cal E}(p_l,p_g), \label{TSTF-1}
\end{align}
where the function $\mathcal{E}$ is given by
\begin{equation}
  \label{E-final}
  \begin{split}
    \mathcal{E}(p_l,p_g)&= S_l\left(\hat{u}(p_g) M(p_g) - N(p_g)\right) + S_g\left( \hat{\rho}_g(p_g) M(p_g) - p_g\right)
 - \int_0^{S_l} p_c(s) ds .
  \end{split}
\end{equation}


\begin{lemma}\label{lema-E-est}
	The function ${\cal E}$ defined in (\ref{E-final}) satisfy:
	\begin{align}
	-M_{p_c} \leq {\cal E}(p_l,p_g) \leq C(|p_g| +1).\label{E-bound}
	\end{align}
	for all $p_l\in \mathbb{R}$ and $p_g \geq 0$, 
	where the constant $C$ depends on $u_{max}$, $\rho_{M}$,  $\hat{C}_g$ and $M_{p_c}$.
\end{lemma}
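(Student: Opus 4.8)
The plan is to handle the lower and upper bounds separately, the decisive observation being that each of the two bracketed quantities appearing in $\mathcal{E}$ can be rewritten as an integral whose integrand has a definite sign. Throughout I would use that $0< S_l\leq 1$ and $0\leq S_g\leq 1$ (by (\ref{Sl-minmax})), that $p_g^+=p_g$ because $p_g\geq 0$, and that, since $p_c\geq 0$ on $(0,1)$ and $S_l\leq 1$, assumption \Hb{4} gives $0\leq \int_0^{S_l}p_c(s)\,ds\leq \int_0^1 p_c(s)\,ds = M_{p_c}$.

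For the lower bound I would first establish the identities
\begin{align*}
\hat{u}(p_g)M(p_g)-N(p_g) &= \int_0^{p_g}\frac{\hat{u}(p_g)-\hat{u}(\sigma)}{\hat{\rho}_g(\sigma)}\,d\sigma,\\
\hat{\rho}_g(p_g)M(p_g)-p_g &= \int_0^{p_g}\frac{\hat{\rho}_g(p_g)-\hat{\rho}_g(\sigma)}{\hat{\rho}_g(\sigma)}\,d\sigma,
\end{align*}
which follow directly from the definitions (\ref{test-fun-def}) after writing $p_g=\int_0^{p_g}d\sigma$. Since $\hat{u}$ is increasing by \Hb{5} and $\hat{\rho}_g$ is increasing by \Hb{6}, while $\sigma\leq p_g$ and $\hat{\rho}_g(\sigma)>0$ on the range of integration, both integrands are nonnegative, so both brackets are $\geq 0$. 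As $S_l,S_g\geq 0$, the first two terms of $\mathcal{E}$ are nonnegative and therefore $\mathcal{E}(p_l,p_g)\geq -\int_0^{S_l}p_c(s)\,ds\geq -M_{p_c}$.

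For the upper bound I would discard the term $-\int_0^{S_l}p_c(s)\,ds\leq 0$, bound $S_l,S_g\leq 1$, and estimate each remaining term using $|\hat{u}(p_g)|\leq u_{max}$, $|\hat{\rho}_g(p_g)|\leq \rho_M$ together with the bounds of Lemma~\ref{lema-test-fun}, namely $|M(p_g)|\leq \hat{C}_g(p_g^++1)$ and $|N(p_g)|\leq \hat{C}_g u_{max}(p_g^++1)$; since $p_g=p_g^+=|p_g|$, every term is controlled by $C(|p_g|+1)$ with $C$ depending only on $u_{max}$, $\rho_M$ and $\hat{C}_g$, which yields the stated upper bound.

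The proof is essentially routine; the only genuinely substantive point is recognizing the sign-definite integral representation, which rests entirely on the monotonicity of $\hat{u}$ and $\hat{\rho}_g$. The one technical check is integrability of the integrands near $\sigma=0$, where $\hat{\rho}_g(0)=0$; this is guaranteed by the convergence of $\int_0^1 d\sigma/\hat{\rho}_g(\sigma)$ assumed in \Hb{6} combined with the boundedness of $\hat{u}$ and $\hat{\rho}_g$. I do not anticipate any real obstacle beyond this.
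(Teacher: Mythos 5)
Your proposal is correct and follows essentially the same route as the paper: your sign-definite integral representations of $\hat{u}(p_g)M(p_g)-N(p_g)$ and $\hat{\rho}_g(p_g)M(p_g)-p_g$ are exactly the monotonicity inequalities the paper uses for the lower bound (bounding $\hat{u}(\sigma)$ by $\hat{u}(p_g)$ and $\hat{\rho}_g(\sigma)$ by $\hat{\rho}_g(p_g)$ under the integral), and the upper bound via Lemma~\ref{lema-test-fun} matches the paper verbatim. Your extra check of integrability near $\sigma=0$ using $\int_0^1 d\sigma/\hat{\rho}_g(\sigma)<\infty$ from \Hb{6} is a careful touch the paper leaves implicit, but it changes nothing substantive.
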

\begin{proof}. Using monotonicity of the gas mass density we have 
	\begin{align*}
	  \hat{u}(p_g) \int_0^{p_g}\frac{1}{\hat \rho_g(\sigma)}d\sigma
	&-\int_0^{p_g}\frac{\hat u(\sigma)}{\hat \rho_g(\sigma)}d\sigma
	\geq  {u \int_0^{p_g}\frac{1}{\hat \rho_g(\sigma)}d\sigma-u\int_0^{p_g}\frac{ 1 }{\hat \rho_g(\sigma)}d\sigma} =0.
	\end{align*}
	By the same argument,   
	\begin{align*}
	S_g\hat \rho_g(p_g)\int_0^{p_g}\frac{1}{\hat \rho_g(\sigma)}d\sigma
	\geq S_g \hat \rho_g(p_g)\cdot \frac{1}{\hat \rho_g(p_g)}\int_0^{p_g}d\sigma
	= S_g p_g .
	\end{align*}
	Therefore, we have  the estimate:
	\begin{align*}
	{\cal E}(p_l,p_g) &\geq  -\int_0^{S_l}p_c(s)\, ds\geq -M_{p_c}  .
	\end{align*}
	The upper bound 
	follows directly from the estimates on the functions $M$ and $N$ in Lemma~\ref{lema-test-fun}.
	Lemma~\ref{lema-E-est} is proved.     
\end{proof}

By the use of above test functions one can formally prove the following a priori estimates: 
\begin{lemma}
\label{lemma-AE1}
	Let the assumptions  \Hb{1}-\Hb{8} be fulfilled and let the initial conditions $p_l^0$ and 
	$p_g^{0}$ be such that ${\cal E}(p_l^0, p_g^{0}) \in L^1(\Omega)$.  Then there is a constant 
	$C$ such that each solution of \eqref{VF-1}, \eqref{VF-2} satisfy:
\begin{align}
  \int_{Q_T}\left\{\lambda_l(S_l)|\nabla p_l|^2+\lambda_g(S_l)|\nabla p_g|^2+|\nabla u |^2\right\}\leq C, \label{bae1}\\
  \int_{Q_T}\left\{|\nabla p|^2+|\nabla \beta(S_l)|^2+|\nabla u|^2\right\}\leq C, \label{bae2}\\
\left\|\partial_t(\Phi [ u S_l+\rho_g S_g ])\right\|_{L^2(0,T; H^{-1}(\Omega))}
+\left\|\partial_t(\Phi S_l)\right\|_{L^2(0,T; H^{-1}(\Omega))}\leq C. \label{bae3}
\end{align}
\end{lemma}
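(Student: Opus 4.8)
The plan is to test (\ref{VF-1}) with $\varphi = p_l - N(p_g)$ and (\ref{VF-2}) with $\psi = M(p_g)$ and add the two identities, the crucial point being the relation (\ref{TSTF-1}) for the accumulation terms. Integrating (\ref{TSTF-1}) over $(0,T)$ converts the two coupled time-derivative terms into $\int_\Omega \Phi\,\mathcal{E}(p_l,p_g)\big|_{t=T} - \int_\Omega \Phi\,\mathcal{E}(p_l^0,p_g^0)$. The lower bound $\mathcal{E}\geq -M_{p_c}$ of Lemma~\ref{lema-E-est} together with the hypothesis $\mathcal{E}(p_l^0,p_g^0)\in L^1(\Omega)$ bounds this contribution from below by a constant, so it moves harmlessly to the right-hand side; this is precisely what motivates the energy functional $\mathcal{E}$.

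I would then treat the flux terms. Since $\nabla N(p_g)=(u/\rho_g)\nabla p_g$ and $\nabla M(p_g)=(1/\rho_g)\nabla p_g$ on $\{p_g>0\}$, the mixed terms $\lambda_l\Kb\nabla p_l\cdot\nabla p_g$ coming from $\varphi$ and from $\psi$ cancel, and the two $u$-diffusion contributions combine, using $\rho_l=\rho_l^{std}+u$, into the single term $S_l\Phi D\rho_g^{-1}\nabla p_g\cdot\nabla u$. The same cancellation occurs for the gravity terms: after adding, the $\lambda_l$-weighted $\nabla p_g$ parts cancel and only $\int_{Q_T}\rho_l\lambda_l\Kb\gb\cdot\nabla p_l + \rho_g\lambda_g\Kb\gb\cdot\nabla p_g$ remains, with mobility weights matching the gradients. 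Hence the left-hand side collapses to $\int_{Q_T}[\lambda_l\Kb\nabla p_l\cdot\nabla p_l + (\lambda_g\Kb\nabla p_g\cdot\nabla p_g + S_l\Phi D\rho_g^{-1}\nabla p_g\cdot\nabla u) - \Phi S_l\rho_l^{-1}D\nabla u\cdot\nabla p_l]$.

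The heart of the argument is coercivity of this form. I would apply Lemma~\ref{lemma:diss}: by (\ref{est:diss-1}) the bracketed gas block is $\geq c_D|\nabla u|^2$, while $\nabla u=\hat u'(p_g)\nabla p_g$ with $\hat u'>0$ makes its cross term nonnegative, so the block also dominates $\lambda_g\Kb\nabla p_g\cdot\nabla p_g$; estimate (\ref{est:diss-1a}) absorbs the liquid cross term into $\tfrac12\lambda_l\Kb\nabla p_l\cdot\nabla p_l + qc_D|\nabla u|^2$ with $q<1$, which is exactly where the low-solubility bound (\ref{low_dissolution}) enters. Splitting the gas block with a factor $\delta\in(0,1-q)$ then yields a lower bound $c_0\int_{Q_T}(\lambda_l|\nabla p_l|^2+\lambda_g|\nabla p_g|^2+|\nabla u|^2)$ with $c_0>0$. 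The source and gravity terms are controlled by Cauchy--Schwarz and Young: the coefficients $u,\rho_g,\rho_l,\lambda_\sigma$ are uniformly bounded by \Hb{3}, \Hb{5}, \Hb{6}; by Lemma~\ref{lema-test-fun} one has $|N|,|M|\leq C(p_g^+{+}1)$, and by Lemma~\ref{lema-GP-bounds} $p_g^+\leq|p|+C$ and $p_l=p-\overline P(S_l)$ with $\overline P$ bounded, so after Poincar\'e (using $|\Gamma_D|>0$) $\|\varphi\|_{L^2},\|\psi\|_{L^2}\leq C\|\nabla p\|_{L^2(Q_T)}+C$. A small multiple of $\|\nabla p\|^2$ and of the weighted gradient norms is then absorbed into the coercive left-hand side, establishing (\ref{bae1}). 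Estimate (\ref{bae2}) follows at once from (\ref{bae1}) and the fundamental equality (\ref{fund-gl1}) of Lemma~\ref{lema-fund-gl1}, using that the total mobility is bounded below by \Hb{3} and $\Kb$ is elliptic by \Hb{2}; finally (\ref{bae3}) comes from testing (\ref{VF-1})--(\ref{VF-2}) with arbitrary $\varphi,\psi\in L^2(0,T;V)$, bounding each spatial integral by the gradient bounds just obtained and the $L^\infty$-bounds on the coefficients, and taking the supremum over the unit ball of $L^2(0,T;V)$.

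The main obstacle is the coercivity step: without low solubility the saturation-weighted diffusion $S_l\Phi D$ degenerates and the liquid cross term $\Phi S_l\rho_l^{-1}D\nabla u\cdot\nabla p_l$ cannot be absorbed, so Lemma~\ref{lemma:diss} with $q<1$ is what rescues the estimate, while the two cancellations built into the choice of $N$ and $M$ are what prevent badly-weighted gradients from appearing on the right. I would also emphasize that this derivation is only \emph{formal}, since $\varphi=p_l-N(p_g)$ and $\psi=M(p_g)$ need not lie in $V$; it is made rigorous on the regularized, time-discretized problems of Sections~4--5, where these test functions are admissible, and the uniform bounds are what survive the passage to the limit.
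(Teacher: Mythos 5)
Your overall route is exactly the paper's: the same test functions $p_l-N(p_g)$ and $M(p_g)$, the identity \eqref{TSTF-1} combined with the lower bound of Lemma~\ref{lema-E-est} and the hypothesis ${\cal E}(p_l^0,p_g^0)\in L^1(\Omega)$ for the accumulation terms, the cancellation of the mixed pressure terms and of the gravity terms (your computation that the two diffusive contributions combine, via $\rho_l=\rho_l^{std}+u$, into $S_l\Phi D\rho_g^{-1}\nabla p_g\cdot\nabla u$ is correct and is what appears in the paper's regularized identity), coercivity through both estimates of Lemma~\ref{lemma:diss} with the $q<1$ absorption, \eqref{bae2} from the fundamental equality \eqref{fund-gl1}, and \eqref{bae3} by duality. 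You are also right that the argument is only formal at this level and that the paper makes it rigorous on the time-discretized, regularized problem (Lemma~\ref{lema-uni-est}) and then passes to the limit; the paper says precisely this after the statement.

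There is, however, one step that fails as written: the claim that $p_l=p-\overline{P}(S_l)$ with ``$\overline{P}$ bounded'', from which you deduce $\|\varphi\|_{L^2}\leq C\|\nabla p\|_{L^2}+C$ for $\varphi=p_l-N(p_g)$. Assumption \Hb{4} only requires $\int_0^1 p_c(s)\,ds<\infty$, so $p_c(0^+)=+\infty$ is allowed; since $\lambda_g(S)/\lambda(S)\to 1$ as $S\to 0$ (because $kr_l(0)=0$ and $kr_g(0)\geq kr_m>0$), the function $\overline{P}(S)=-\int_S^1(\lambda_g/\lambda)p_c'\,ds$ then diverges as $S\to 0$ (e.g.\ $p_c'(S)\sim -1/(S\log(e/S))$, which is also compatible with \Hb{8}). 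Thus there is no two-sided control of $p_l$ by $|p|$, which is exactly why Lemma~\ref{lema-GP-bounds} provides only the one-sided bound $p_l\leq p$ and the \emph{weighted} bounds $|S_lp_l|\leq|p|+C$, $|S_gp_g|\leq|p|+C$. The repair is the paper's sign-structure argument: for the injection term use $F_I\geq 0$, $N(p_g)\geq 0$ and $p_l\leq p$ to get $\int_{Q_T}F_I(p_l-N(p_g))\leq\int_{Q_T}F_Ip$; in the production terms, $-\int S_lF_Pp_l$ carries the saturation factor so that $|S_lp_l|\leq|p|+C$ applies, $\int S_lF_P(N(p_g)-\hat u(p_g)M(p_g))\leq 0$ by monotonicity of $\hat u$, and $-\int(uS_l+\rho_gS_g)F_PM(p_g)\leq 0$ since $p_g\geq 0$ a.e.\ (Lemma~\ref{pozitivity}) and $M,F_P\geq 0$. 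Your bound for $\psi=M(p_g)$ itself is fine, since $|M(p_g)|\leq\hat C_g(p_g^++1)$ and $p_g^+\leq|p|+C$ is two-sided. With this replacement the rest of your argument (absorbing the gravity terms into the weighted gradient norms and estimating $\|p\|_{L^2}$ via Poincar\'e and \eqref{fund-gl1}) closes the proof exactly as in Lemma~\ref{lema-uni-est}.
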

We shall not give a direct proof of Lemma~\ref{lemma-AE1} since it will be proved for regularized 
problem and then inferred by passing to the limit in a regularization parameter.

\section{Regularized $\eta$-problem and time discretization}

The system of equations (\ref{eq-1B}), (\ref{eq-2B}) contains several degeneracies and, 
as a consequence, the 
phase pressures do not belong to $L^2(0,T;H^1(\Omega))$ space; the same is true for the
capillary pressure and the saturation. 
In the first regularisation step we will add some terms into governing equations that will make 
the capillary pressure  $L^2(0,T;H^1(\Omega))$ function. Then, using (\ref{glrelation1-grad}), we may conclude that the
regularized phase 
pressures are also $L^2(0,T;H^1(\Omega))$ functions. The regularized system
is as follows:
\begin{equation}
    \Phi\dert{S_l^{\eta}} + \dv \mathbf{Q}^{w,\eta} +S_l^{\eta} F_p =  F_I, 
 \label{eq-1Breg} 
\end{equation}
\begin{equation}
\Phi\dert{}\left(   u^{\eta} S_l^{\eta} +   \rho_g^{\eta} S_g^{\eta}\right)
+ \dv \mathbf{Q}^{h,\eta}
 +(u^{\eta}S_l^{\eta}+\rho_g^{\eta}S_g^{\eta})F_P= 0,
 \label{eq-2Breg}
\end{equation}
where the fluxes are given by:
\begin{equation}
\label{eq-3Breg} 
\mathbf{Q}^{w,\eta}= -\lambda_l(S_l^{\eta})\Kb(\nabla p_l^{\eta} -\rho_l^{\eta}\gb)+\Phi S_l^{\eta} \frac{1}{\rho_l^{\eta}}
                   D  \nabla u^{\eta}+\eta\nabla (p_g^{\eta}-p_l^{\eta}),
\end{equation}
\begin{equation}
\label{eq-4Breg} 
\begin{split}
\mathbf{Q}^{h,\eta}= & - u^{\eta} \lambda_l(S_l^{\eta})\Kb \left(\nabla p_l^{\eta} - \rho_l^{\eta} \gb\right) 
          - \rho_g^{\eta} \lambda_g(S_l^{\eta})\Kb\left(\nabla p_g^{\eta} -\rho_g^{\eta} \gb\right) \\
          & - \Phi S_l^{\eta} \frac{\rho_l^{std}}{\rho_l^{\eta}} D  \nabla u^{\eta}
		   - \eta (\rho_g^{\eta}-u^{\eta})\nabla (p_g^{\eta}-p_l^{\eta}).
\end{split}
\end{equation}
The system is  completed with the initial  and the boundary conditions:
\begin{equation}
	\label{eq-5Breg}
  \begin{split}
  p_l^{\eta}(x,0)=p_l^{0}(x),\quad p_g^{\eta}(x,0)= p_g^{0}(x)& \qquad   \mathrm{in}\ \Omega\\ 
p_l^{\eta}(x,t)=0,\quad p_g^{\eta}(x,t)=0 & \qquad \mathrm{on}\ (0,T)\times \Gamma_D\\
\mathbf{Q}^{h,\eta}\cdot {\bf n}=0,\quad   
\mathbf{Q}^{w,\eta}\cdot {\bf n}=0 &\qquad    \mathrm{on}\ (0,T)\times \Gamma_N
\end{split}
\end{equation}
The secondary variables used in (\ref{eq-1Breg})--(\ref{eq-4Breg}) are defined as:
\begin{align*}
u^{\eta}=\hat u(p_g^{\eta}),\; \rho_l^{\eta}=\rho_l^{std}+u^{\eta},\; \rho_g^{\eta}=\hat\rho_g(p_g^{\eta}),\;
S_l^{\eta}=p_c^{-1}(p_g^{\eta}-p_l^{\eta}),\; S_g^{\eta}=1-S_l^{\eta}.
\end{align*}

We shall first prove the following theorem which states the existence of a weak solution to the problem 
(\ref{eq-1Breg})--(\ref{eq-5Breg}) and then, by passing to the limit as small parameter $\eta$ tends to zero, 
the existence of a weak solution to degenerated system (\ref{eq-1B})--(\ref{initial}). 

\begin{theorem}\label{TM2}
    Let  \Hb{1}-\Hb{8} hold
    and assume $(p_{l}^0, p_g^{0})\in H^1(\Omega)\times H^1(\Omega)$, $p_g^0\geq 0$.
Then  for all $\eta>0$ there exists $(p_l^{\eta},p_g^{\eta})$ with $p_g^{\eta}\geq 0$ a.e. in $Q_T$, satisfying  
\begin{align*}
p_l^{\eta},  p_g^{\eta}, u^\eta  \in L^2(0,T;V),\; \\
  \Phi  \partial_t (u^{\eta} S_l^{\eta}+\rho_g^{\eta}S_g^{\eta}),
   \Phi  \partial_t(S_l^{\eta} )\in  L^2(0,T; V'),\\
	u^{\eta} S_l^{\eta}+\rho_g^{\eta}S_g^{\eta} \in C^{0}([0,T];L^{2}(\Omega)),  S_l^{\eta}\in C^{0}([0,T]; L^{2}(\Omega)).
   \end{align*}
For all $\varphi \in L^2(0,T; V)$,
\begin{equation}\label{TM2-ve1}
\begin{split}
     \int_0^T \langle \Phi\dert{S_l^{\eta}},\varphi\rangle dt 
     +\int_{Q_T} [\lambda_l(S_l^{\eta})\Kb\nabla p_l^{\eta} - \Phi S_l^\eta \frac{1}{\rho_l^{\eta}}D\nabla u^{\eta}-\eta\nabla(p_g^{\eta}-p_l^{\eta})]  \cdot \nabla \varphi dx dt\\
    +\int_{Q_T}S_l^{\eta}F_p\varphi\, dx dt =\int_{Q_T}F_I \varphi\, dx dt   +
     \int_{Q_T}\rho_l^\eta\lambda_l(S_l^\eta)\Kb \gb \cdot \nabla \varphi dx dt. 
\end{split}
\end{equation}
For  all $\psi \in  L^2(0,T; V)$,   
\begin{equation}\label{TM2-ve2}
\begin{split}
   & \int_0^T \langle \Phi\dert{}\left(   u^{\eta} S_l^{\eta} +   \rho_g^{\eta} S_g^{\eta}\right),\psi\rangle dt\\
 +  & \int_{Q_T}[ u^{\eta} \lambda_l(S_l^{\eta})\Kb  \nabla p_l^{\eta} +  \rho_g^{\eta}  \lambda_g(S_l^{\eta}) \Kb \nabla p_g^{\eta}  
 + \Phi S_l^{\eta}\frac{\rho_l^{std}}{\rho_l^{\eta}}D\nabla u^{\eta}] \cdot \nabla \psi dx dt\\
 +  &\eta  \int_{Q_T}(\rho_g^{\eta}-u^{\eta})\nabla (p_g^{\eta}-p_l^{\eta}) \cdot \nabla \psi dx dt\\
 + & \int_{Q_T} (u^\eta S_l^\eta +\rho_g^\eta S_g^{\eta})F_P\psi\, dx dt=\int_{Q_T}\left(  \rho_l^{\eta} u^{\eta} \lambda_l(S_l^{\eta})+  (\rho_g^{\eta})^2  \lambda_g(S_l^{\eta}) \right) \Kb\gb  \cdot \nabla \psi dx dt.
\end{split}
\end{equation}
Furthermore, $u^{\eta}S_l^{\eta}+\rho_g^{\eta}(1-S_l^{\eta})=\hat u(p_g^{0}) S_l^0+\hat\rho_g(p_g^{0})(1-S_l^0)$  a.e in $\Omega$ for $t=0,$ and \linebreak $S_l^\eta(x,0)=S_l^0$ a.e in $\Omega$.
\end{theorem}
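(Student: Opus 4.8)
The plan is to construct the solution of the $\eta$-problem by semidiscretization in time, a fixed-point argument at each time level, and a passage to the limit in the time step. First I would partition $[0,T]$ into $N$ subintervals of length $\tau=T/N$ and replace the time derivatives in (\ref{eq-1Breg})--(\ref{eq-2Breg}) by backward differences, thereby replacing the parabolic system by a finite sequence of nonlinear elliptic problems: given $(p_l^{n-1},p_g^{n-1})$, find $(p_l^{n},p_g^{n})\in V\times V$ solving the discrete analogues of (\ref{TM2-ve1})--(\ref{TM2-ve2}), with $\Phi\,\partial_t S_l^\eta$ replaced by $\Phi(S_l^{n}-S_l^{n-1})/\tau$ and similarly for the second accumulation term. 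The datum $(p_l^0,p_g^0)\in H^1\times H^1$ seeds the recursion. Existence at a fixed level would be obtained from the Schauder fixed-point theorem: the natural map freezes the secondary coefficients $\lambda_\sigma,\rho_g,u,\rho_l$ at a given iterate, solves the resulting linear elliptic system, and returns the new pair; coercivity is supplied by the $\eta$-term $\eta\nabla(p_g-p_l)$ together with the nondegenerate diffusion $D/\rho_l$, while continuity and compactness follow from boundedness of the secondary variables (\Hb{4}--\Hb{6}) and the Rellich embedding. If the bare operator fails to be coercive on all of $V\times V$, I would insert an auxiliary regularization (a small $\varepsilon(-\Delta)$ term or a truncation) and remove it afterwards, as announced for Section~5.

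The heart of the matter is deriving energy estimates uniform in $\tau$ by testing the two discrete equations with $\varphi=p_l^{n}-N(p_g^{n})$ and $\psi=M(p_g^{n})$, where $M,N$ are given in (\ref{test-fun-def}). The identity (\ref{TSTF-1}) shows that at the continuous level these test functions collapse the two accumulation terms into $\partial_t\mathcal{E}(p_l,p_g)$; at the discrete level I would instead establish the telescoping inequality in which the discrete accumulation tested against $(\varphi,\psi)$ dominates $(\mathcal{E}^{n}-\mathcal{E}^{n-1})/\tau$, so that summation over $n$ telescopes and is bounded by $-M_{p_c}\le\mathcal{E}\le C(|p_g|+1)$ from Lemma~\ref{lema-E-est}. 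On the flux side I would apply the global-pressure equality of Lemma~\ref{lema-fund-gl1} to rewrite the phase-pressure quadratic form as $\lambda\,\Kb\nabla p\cdot\nabla p+\Kb\nabla\beta(S_l)\cdot\nabla\beta(S_l)$, and crucially invoke Lemma~\ref{lemma:diss}: the low-solubility inequality (\ref{low_dissolution}) is exactly what lets the diffusion coupling $\int S_l\Phi D\rho_g^{-1}\nabla p_g\cdot\nabla u$ control $c_D\int|\nabla u|^2$, while the cross term $\int\rho_l^{-1}\Phi S_l D\,\nabla u\cdot\nabla p_l$ is absorbed into $\tfrac12\lambda_l\Kb\nabla p_l\cdot\nabla p_l+q\,c_D|\nabla u|^2$ with $q<1$, leaving the strictly positive remainder $(1-q)c_D\int|\nabla u|^2$. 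Source and gravity terms are treated by Young's inequality with \Hb{7} and the bounds of Lemma~\ref{lema-GP-bounds}. The outcome is the discrete form of (\ref{bae1})--(\ref{bae2}), plus an $\eta$-dependent bound $\eta\int|\nabla(p_g-p_l)|^2\le C$ that keeps $p_c(S_l^\eta)=p_g^\eta-p_l^\eta$ bounded in $L^2(0,T;H^1)$.

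Finally, I would build piecewise-constant and piecewise-linear-in-time interpolants of the discrete iterates and pass to the limit $\tau\to0$. The energy bounds yield weak limits for $p_l^\eta,p_g^\eta,u^\eta,p^\eta,\beta(S_l^\eta)$ in $L^2(0,T;V)$; the bound on the accumulation differences gives (\ref{bae3}) for $\partial_t(\Phi[uS_l+\rho_gS_g])$ and $\partial_t(\Phi S_l)$ in $L^2(0,T;V')$; and an Aubin--Lions--Simon argument, combined with the $H^1$ control of $p_c(S_l^\eta)$ and hence of $S_l^\eta$ through the strict monotonicity of $p_c$ in \Hb{4}, provides strong $L^2(Q_T)$ convergence of $S_l^\eta$ and of $u^\eta S_l^\eta+\rho_g^\eta S_g^\eta$. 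This strong convergence identifies the nonlinear coefficients in the limiting equalities (\ref{TM2-ve1})--(\ref{TM2-ve2}), and time-continuity of the accumulation terms delivers the initial conditions in the stated sense. Positivity $p_g^\eta\ge0$ follows from a discrete maximum-principle argument using the test functions $(\min(\hat u(p_g),0))^2/2$ and $\min(\hat u(p_g),0)$, as for Lemma~\ref{pozitivity}.

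The main obstacle I anticipate is making the energy estimate genuinely uniform in $\tau$: the discrete telescoping with $\mathcal{E}$ must be compatible with the absorption furnished by Lemma~\ref{lemma:diss}, so that the degeneracy of the $S_l$-weighted diffusion is compensated while spending only the fraction $q<1$, leaving enough coercivity to close the estimate. The secondary difficulty is securing the strong convergence of the saturation required to pass to the limit in the degenerate products; it is precisely here, at the level of Theorem~\ref{TM2}, that the $\eta$-regularization is indispensable, whereas its removal will later (Sections~6--7) have to rely on the Hölder hypotheses \Hb{8}--\Hb{9}.
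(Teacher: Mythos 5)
Your plan coincides with the paper's proof in all essentials: backward-Euler time discretization reducing to elliptic problems, a Leray--Schauder fixed point at each level with auxiliary regularization (the paper's concrete version adds $\varepsilon$ to $\lambda_l$ and $\hat\rho_g$, an $\varepsilon$-weighted $\nabla p_g$ coercivity term, and a spectral projection ${\EuScript P}_N$ --- exactly the kind of device you anticipate), the entropy test functions $p_l-N(p_g)$ and $M(p_g)$ with the discrete telescoping inequality for $\mathcal{E}$, absorption of the degenerate diffusion via Lemma~\ref{lemma:diss} and the global-pressure identity, compactness from the $\eta$-induced $H^1$ control of $p_c(S_l)$ plus an Aubin--Lions argument, and the same nonnegativity proof via $\min(\hat u(p_g),0)$. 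The one step you assert without mechanism --- that strong convergence of $S$ and of $uS_l+\rho_g S_g$ ``identifies the nonlinear coefficients'' --- is carried out in the paper by a Minty monotonicity argument identifying the limit of $r_g^{\delta t}$ and then strict monotonicity of $\sigma\mapsto\hat u(\sigma)S+\hat\rho_g(\sigma)(1-S)$ to upgrade to a.e.\ convergence of $p_g^{\delta t}$, which is what your identification implicitly requires.
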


In the proof of   Theorem~\ref{TM2} we will first discretize the time derivative, reducing 
the problem to a sequence of elliptic problems, which will be solved by an application of the Schauder 
fixed point theorem. 
In order to simplify the notation we will omit in writing the 
dependence  on the small parameter $\eta$ until the passage to the limit as $\eta \to 0$.

The time derivative is discretized in the following way:
 For each positive integer {$M$} we divide $[0,T]$ into {$M$} subintervals of equal length 
 $\delta t= T/M$. We set  $t_n = n \delta t$ and $J_n = (t_{n-1},t_n]$ for {$1\leq n\leq M$}, and
we  denote the time difference operator by
 \[ \partial^{\delta t} v(t) = \frac{v(t+\delta t)-v(t)}{\delta t}.\]
For any Hilbert space ${\cal H}$ we denote
 \[ l_{\delta t}({\cal H}) =\{ v\in L^\infty(0,T;{\cal H})\colon v\text{ is constant in time on each subinterval } 
 J_n \subset [0,T]\}.
 \]
 For $v^{\delta t} \in  l_{\delta t}({\cal H})$ we set $v^n = v^{\delta t}|_{J_n}$ and, therefore, we can write
 \begin{align*}
     v^{\delta t} = {\sum_{n=1}^{M}} v^{n} \chi_{(t_{n-1},t_n]}(t),\quad v^{\delta t}(0) = v^0.
 \end{align*}
 To function $v^{\delta t} \in  l_{\delta t}({\cal H})$ one can assign a piecewise linear in time function 
 \begin{align}
     \tilde{v}^{\delta t} = {\sum_{n=1}^{M}} \left(  \frac{t_n-t}{\delta t} v^{n-1} +  \frac{t-t_{n-1}}{\delta t} v^{n}\right) \chi_{(t_{n-1},t_n]}(t),
     \quad \tilde{v}^{\delta t}(0) = v^0. \label{tilda-operator}
 \end{align}
Then we have
    $\partial_t \tilde{v}^{\delta t}(t) = \partial^{-\delta t} v^{\delta t}(t)$, for $t\neq n\delta t, n=0,1,\ldots, N$.
 Finally, for any function
 $f\in L^1(0,T; {\cal H})$ we  define $f^{\delta t} \in  l_{\delta t}({\cal H})$ by,
 \[ f^{\delta t}(t) = \frac{1}{\delta t} \int_{J_n} f(\tau) d\tau, \quad t\in J_n.\]
The discrete secondary variables are denoted as before by:
\begin{align*}
u^{\delta t}=\hat u(p_g^{\delta t}),\;
\rho_l^{\delta t}=\rho_l^{std} + u^{\delta t},\;
\rho_g^{\delta t}=\hat \rho_g(p_g^{\delta t}),\;
S_l^{\delta t}=p_c^{-1}(p_g^{\delta t}-p_l^{\delta t}).
\end{align*}
 with  the boundary conditions (\ref{eq-5Breg}) and the initial values of the phase pressures which are given by
The weak formulation of the discrete in time system is as follows: 
For given $p_l^0$ and  $p_g^{0}$ find $p_l^{\delta t} \in l_{\delta t}(V)$ and $p_g^{\delta t}\in l_{\delta t}(V)$ satisfying
 \begin{align}\nonumber
 \int_{Q_T} \Phi \partial^{-\delta t} (S_l^{\delta t})\varphi \, dx\, dt 
  &+\int_{Q_T} [\lambda_l(S_l^{\delta t})\Kb\nabla p_l^{\delta t} - \Phi S_l^{\delta t} \frac{1}{\rho_l^{\delta t}}D\nabla u^{\delta t}]\cdot \nabla \varphi \, dx\, dt\\ \label{discrete-1dt}
      &-\eta\int_{Q_T}[\nabla( p_g^{\delta t}-p_l^{\delta t})]  \cdot \nabla \varphi\,  dx\,  dt + \int_{Q_T}S_l^{\delta t}F_P^{\delta t}\varphi \, dx\, dt\\ \nonumber 
      &= \int_{Q_T} F_I^{\delta t}\varphi \, dx\,  dt   +
      \int_{Q_T}\rho_l^{\delta t}\lambda_l(S_l^{\delta t})\Kb \gb \cdot \nabla \varphi\,  dx\,  dt
\end{align}
for all $\varphi\in l_{\delta t}(V)$;
\begin{align}\nonumber
  \int_{Q_T} &\Phi\partial^{-\delta t}\left(  u^{\delta t} S_l^{\delta t} +   \rho_g^{\delta t}(1-S_l^{\delta t})\right)\, \psi\, dx\,  dt\\ \nonumber
    & +\int_{Q_T}\left(u^{\delta t} \lambda_l(S_l^{\delta t})\Kb  \nabla p_l^{\delta t} +  \rho_g^{\delta t}  \lambda_g(S_l^{\delta t}) \Kb \nabla p_g^{\delta t}\right)\cdot \nabla \psi \, dx\, dt\\  \label{discrete-2dt}
     &+\int_{Q_T}\left( \Phi S_l^{\delta t}\frac{\rho_l^{std}}{\rho_l^{\delta t}}D\nabla u^{\delta t}\right)\cdot \nabla \psi \, dx\, dt + \eta\int_{Q_T}\left( (\rho_g^{\delta t}-u^{\delta t})\nabla ( p_g^{\delta t}-p_l^{\delta t})\right)  \cdot \nabla \psi\, dx\,  dt\\ \nonumber
     & + \int_{Q_T} (u^{\delta} S_l^{\delta t}+\rho_g^{\delta t} S_g^{\delta t})F_P^{\delta t}\psi \, dx\, dt\\ \nonumber
   & =\int_{Q_T}\left(  \rho_l^{\delta t}u^{\delta t} \lambda_l(S_l^{\delta t})+  (\rho_g^{\delta t})^2  \lambda_g(S_l^{\delta t}) \right) \Kb\gb  \cdot \nabla \psi dx dt
\end{align}
for all $\psi\in l_{\delta t}(V)$. For $t\leq 0$ we set  $p_l^{\delta t}=p_l^0$, $p_g^{\delta t} = p_g^{0}$.

We will prove the following Theorem~\ref{TM_3} and then, by passing to the limit as 
$\delta t\to 0$, we will  establish Theorem~\ref{TM2}.
\begin{theorem}\label{TM_3}
    Assume  \Hb{1}--\Hb{8}, 
   { $p_g^{0}, p_l^0\in L^2(\Omega)$} and $p_g^0 \geq 0$. Then for all $\delta t$  
   there exist  functions {$p_l^{\delta t}, p_g^{\delta t} \in l_{\delta t}(V)$}, $p_g^{\delta t} \geq 0\text{ a.e. in } Q_T$,
    satisfying  (\ref{discrete-1dt}), (\ref{discrete-2dt}).
\end{theorem}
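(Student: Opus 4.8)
The plan is to exploit the piecewise-constant-in-time structure of $l_{\delta t}(V)$ to reduce (\ref{discrete-1dt})--(\ref{discrete-2dt}) to a finite recursion of elliptic problems, and to solve each elliptic step by the Schauder fixed point theorem. Choosing in (\ref{discrete-1dt})--(\ref{discrete-2dt}) test functions supported on a single subinterval $J_n$ shows that the space-time system is equivalent to $M$ successive stationary problems: for each $n$, given $(p_l^{n-1},p_g^{n-1})\in L^2(\Omega)^2$ with $p_g^{n-1}\geq 0$, find $(p_l^n,p_g^n)\in V\times V$ such that for all $\varphi,\psi\in V$
\begin{align*}
\frac1{\delta t}\int_\Omega \Phi (S_l^n-S_l^{n-1})\varphi\,dx
 +\int_\Omega\big[\lambda_l(S_l^n)\Kb\nabla p_l^n-\Phi S_l^n\tfrac{1}{\rho_l^n}D\nabla u^n-\eta\nabla(p_g^n-p_l^n)\big]\cdot\nabla\varphi\,dx
 +\int_\Omega S_l^n F_P^n\varphi\,dx = \cdots,
\end{align*}
and the analogous equation drawn from (\ref{discrete-2dt}). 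It then suffices to solve one such elliptic step and to induct on $n$, the output $(p_l^n,p_g^n)$ serving as the datum at level $n+1$ and assembling into $p_l^{\delta t},p_g^{\delta t}\in l_{\delta t}(V)$.

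For a fixed step I would set up the fixed point map by freezing the nonlinear coefficients. Given $(\tilde p_l,\tilde p_g)\in L^2(\Omega)^2$ I compute $\tilde S_l=p_c^{-1}(\tilde p_g-\tilde p_l)$, $\tilde u=\hat u(\tilde p_g)$, $\tilde\rho_l$, $\tilde\rho_g$ via (\ref{second}), and then solve the \emph{linear} elliptic system obtained by evaluating all mobilities and densities, as well as the factor $\hat u'$ appearing in $\nabla u=\hat u'(p_g)\nabla p_g$, at the tilde variables, and by freezing the saturation in the accumulation term. Uniform ellipticity is restored by an auxiliary $\varepsilon$-regularization compensating the vanishing of $\lambda_l,\lambda_g$; since the frozen coefficients are globally bounded by \Hb{3}--\Hb{6} independently of $(\tilde p_l,\tilde p_g)$, the Lax--Milgram lemma yields a unique $(p_l,p_g)\in V\times V$, the condition $|\Gamma_D|>0$ supplying Poincar\'e's inequality. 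Setting $\mathcal T(\tilde p_l,\tilde p_g)=(p_l,p_g)$, a solution of the elliptic step is exactly a fixed point of $\mathcal T$.

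To apply Schauder I verify three properties. Continuity of $\mathcal T$ follows from the continuity and boundedness of the nonlinearities in \Hb{3}--\Hb{6} together with dominated convergence, so that the frozen coefficients depend continuously on $(\tilde p_l,\tilde p_g)$ in $L^2$. Compactness is automatic since $\mathcal T$ takes values in $V\times V$, which embeds compactly into $L^2(\Omega)^2$ by Rellich's theorem. The self-map property on a sufficiently large closed ball of $L^2(\Omega)^2$ is the substance of the argument and rests on a discrete energy estimate: testing with the special functions $\varphi=p_l-N(p_g)$ and $\psi=M(p_g)$ of (\ref{test-fun-def}), using the pointwise identity (\ref{TSTF-1}) and the bound (\ref{E-bound}) for $\mathcal E$, produces — after summation over the time levels, i.e. the discrete analogue of Lemma~\ref{lemma-AE1} — a bound in which the degenerate diffusive cross terms are absorbed by the low-solubility estimates (\ref{est:diss-1})--(\ref{est:diss-1a}) of Lemma~\ref{lemma:diss}, while the $\eta$-term together with the fundamental equality (\ref{fund-gl1}) controls $\nabla p$ and $\nabla\beta(S_l^n)$. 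This yields an \emph{a priori} $V$-bound independent of the fixed-point iterate, hence the self-map property. Finally the sign condition $p_g^n\geq 0$ follows from the discrete maximum-principle argument (the analogue of Lemma~\ref{pozitivity}) with test functions $\varphi=\tfrac12(\min(\hat u(p_g^n),0))^2$ and $\psi=\min(\hat u(p_g^n),0)$, and the auxiliary $\varepsilon$-regularization is removed by passing to the limit via the $\varepsilon$-uniform energy bound.

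The main obstacle I anticipate is establishing the $V$-bound that secures the self-map property for a system whose principal part is degenerate: the mobility-weighted form $\lambda_l(S_l)\Kb\nabla p_l\cdot\nabla p_l+\lambda_g(S_l)\Kb\nabla p_g\cdot\nabla p_g$ controls only $\nabla p$ and $\nabla\beta(S_l)$ through (\ref{fund-gl1}), not the full phase-pressure gradients, while the Fick diffusion terms carry the factor $\Phi S_l D/\rho_l$ and may overwhelm the available coercivity. It is precisely here that the low-solubility hypothesis (\ref{low_dissolution}) enters through Lemma~\ref{lemma:diss}: the smallness of $M_g$ guarantees that $\nabla u$ is dominated by the gas-phase dissipation and that the cross term $\tfrac{1}{\rho_l}\Phi S_l D\nabla u\cdot\nabla p_l$ can be split between the liquid dissipation $\tfrac12\lambda_l(S_l)\Kb\nabla p_l\cdot\nabla p_l$ and a strict fraction $q<1$ of $c_D|\nabla u|^2$, closing the estimate with a nonnegative remainder. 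Reconciling this structural, mobility-weighted coercivity with the uniform ellipticity required by the Lax--Milgram solve of the frozen linear problem — which forces the auxiliary $\varepsilon$-regularization and then its uniform removal — is the delicate bookkeeping on which the whole proof turns.
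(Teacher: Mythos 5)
Your overall skeleton --- reduction to $M$ successive elliptic steps, a frozen-coefficient map, a Schauder-type fixed point, the entropy test functions, Lemma~\ref{lemma:diss} to absorb the Fick terms, and the positivity argument via $\varphi=\tfrac12(\min(\hat u(p_g^k),0))^2$, $\psi=\min(\hat u(p_g^k),0)$ (this is exactly Lemma~\ref{lemma-pos}) --- matches the paper. But two steps, as you describe them, would fail. First, the fixed-point framework: you invoke plain Schauder and claim the self-map property of a large $L^2$-ball follows from the energy estimate with $\varphi=p_l-N(p_g)$, $\psi=M(p_g)$. That estimate is available only when the frozen variables coincide with the solution: the identity (\ref{TSTF-1}) (which, moreover, is time-continuous --- at the discrete level one only has a one-sided inequality obtained by the monotonicity manipulations on the quantity ${\cal J}$ in the proof of Lemma~\ref{lema-uni-est}) pairs the accumulation term with test functions built from the \emph{same} $(p_l,p_g)$. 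For an arbitrary input $(\tilde p_l,\tilde p_g)$ in the ball, the frozen accumulation term $\frac{1}{\delta t}\int_\Omega\Phi(\tilde S_l-S_l^{*})(p_l-N(p_g))\,dx$ does not telescope into ${\cal E}$-differences and can only be estimated by Cauchy--Schwarz, giving a bound of the form $\|\mathcal{T}(\tilde x)\|\leq a+b\|\tilde x\|$ with no smallness of $b$; no ball is invariant. This is precisely why the paper uses the Leray--Schauder variant: a bound is needed only on the solution set of $(p_l,p_g)=\sigma\mathcal{T}(p_l,p_g)$, where frozen and actual variables agree, and even there the coercivity closing (\ref{sigma-est-3})--(\ref{sigma-est-5}) is the crude $\varepsilon$-ellipticity, not the degenerate dissipation; the refined entropy estimate (Lemma~\ref{lema-uni-est}) enters only afterwards, to remove $\varepsilon$, and with the $\varepsilon$-shifted test functions (\ref{N-M-delta}), which are defined on all of $\mathbb{R}$ --- your $M,N$ of (\ref{test-fun-def}) vanish for $p_g<0$, while the sign of $p_g$ is not yet known at this stage of the construction.

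Second, well-posedness of the frozen solve. You freeze only the factor $\hat u'$ and keep $\nabla p_g$ as an unknown in the Fick term, so your ``linear'' system remains coupled: the cross terms, e.g.\ $\tilde u\,\lambda_l^{\varepsilon}(\tilde S_l)\Kb\nabla p_l\cdot\nabla\psi$ in the gas equation and $\Phi\tilde S_l\rho_l^{-1}D\,\hat u'(\tilde p_g)\nabla p_g\cdot\nabla\varphi$ in the liquid equation, are neither sign-definite nor small (there is no smallness of $u_{max}$), and adding $\varepsilon$ to the diagonal does not make the coupled form coercive on $V\times V$; the Lax--Milgram step as you invoke it is unjustified. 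The paper avoids this by decoupling completely: (\ref{tmp_p1-1b}) is solved for $p_l$ alone, and then in (\ref{tmp_p1-2b}) the only unknown-gradient term is $\varepsilon\tilde{\overline{\rho}}_g^{\varepsilon}\nabla p_g$, every other occurrence of the gas pressure being frozen. But freezing terms such as $\nabla\hat u(\overline p_g)$ and $\eta\nabla(\overline p_g-\overline p_l)$ for inputs merely in $L^2(\Omega)$ is meaningless, which is why the paper interposes the spectral projector ${\EuScript P}_N$ with properties \Pb{1}--\Pb{3} (so that $\|\nabla{\EuScript P}_N[\overline p_g]\|_{L^2(\Omega)}\leq C_N\|\overline p_g\|_{L^2(\Omega)}$ and the symmetry needed for the test functions survives via \Pb{2}) and then performs a third limit passage $N\to\infty$ before $\varepsilon\to 0$. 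This layer is entirely absent from your plan, and the alternative of taking the fixed-point space to be $V\times V$ would destroy the compactness of $\mathcal{T}$ that you obtain from Rellich's theorem.
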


 The solution of the problem   (\ref{discrete-1dt}), (\ref{discrete-2dt}) is build from a
  sequence of  elliptic problems that we write here explicitly for reader convenience. 
 Let us fix {$1\leq k\leq M$}. 
 We need to establish the existence of functions $p_l^k, p_g^k \in  V$ that satisfy
 \begin{align}\nonumber
  \frac{1}{\delta t}\int_{\Omega}\Phi  (S_l^{k}-S_l^{k-1})\varphi \, dx
       &+\int_{\Omega} [ \lambda_l(S_l^{k})\Kb\nabla p_l^{k} - \Phi S_l^{k} \frac{1}{\rho_l^k}D\nabla u^{k}]\cdot \nabla \varphi \, dx\\ 
	   \label{tmp_p1-1}
       &-\eta\int_{\Omega}[ \nabla p_g^k -  \nabla p_l^{k}]  \cdot \nabla \varphi\,  dx + \int_{\Omega} S_l^k F_P^k \varphi\, dx \\ \nonumber 
       &= \int_{\Omega} F_I^k \varphi \, dx  +
       \int_{\Omega}\rho_l^k\lambda_l(S_l^{k})\Kb \gb \cdot \nabla \varphi\,  dx
 \end{align}
for all $\varphi\in V$ and
\begin{align}\nonumber
  \frac{1}{\delta t}\int_{\Omega} &\Phi \left(\left(u^{k} S_l^{k} +   \rho_g^k(1-S_l^{k})\right)- \left(   u^{k-1} S_l^{k-1} +   \rho_g^{k-1}(1-S_l^{k-1})\right)\right) \, \psi\, dx \\ \nonumber
    & +\int_{\Omega}\left(  u^{k} \lambda_l(S_l^{k})\Kb  \nabla p_l^{k} +  \rho_g^k  \lambda_g(S_l^{k}) \Kb \nabla p_g^k\right)\cdot \nabla \psi \, dx\\  \label{tmp_p1-2}
     &+\int_{\Omega}\left( \Phi S_l^{k}\frac{\rho_l^{std}}{\rho_l^k}D\nabla u^{k}\right)\cdot \nabla \psi \, dx\, dt\\ \nonumber
     & + \eta\int_{\Omega}\left(\rho_g^k - u^k\right)\left( \nabla (p_g^k-p_l^{k})\right)  \cdot \nabla \psi\, dx + \int_{\Omega} (u^k S_l^k+\rho_g^kS_g^k)F_P^{k}\psi\, dx\\ \nonumber
   & =\int_{\Omega}\left(  \rho_l^k u^{k} \lambda_l(S_l^{k})+  (\rho_g^k)^2  \lambda_g(S_l^{k}) \right) \Kb\gb  \cdot \nabla \psi dx
\end{align}
for all $\psi\in V$. 
Here, as always we use notation:
\begin{align*}
u^k=\hat u(p_g^k),\;
\rho_l^k= \rho_l^{std} + \hat u(p_g^k),\;
\rho_g^k=\hat \rho_g(p_g^k),\;
S_l^k=p_c^{-1}(p_g^k-p_l^k).
\end{align*}

\section{Application of the Schauder fixed point theorem}
\label{ss-5.3}

In this section we prove the Theorem~\ref{TM_3}  by proving the 
existence of at least one solution to the problem (\ref{tmp_p1-1}), (\ref{tmp_p1-2}). 
The existence of the solution $(p_l^{k}, p_g^{k})$
for the system  (\ref{tmp_p1-1})--(\ref{tmp_p1-2})   will be proved by
Leray-Schauder's fixed point theorem. This technique is common and is used in \cite{AJPP}, \cite{Khalil-Saad} and similar papers. 
We cite the Leray-Schauder's theorem formulation from \cite{AJPP}.
\begin{theorem}
	Let ${\cal T}$ be a continuous and compact map of a Banach ${\cal B}$ space into itself. 
	Suppose that a set of $x\in {\cal B}$ such that $x=\sigma{\cal T} x$ is bounded for some $\sigma \in [0,1]$. 
	Then the map ${\cal T}$ has a fixed point.
\end{theorem}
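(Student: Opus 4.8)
The plan is to reduce the Leray--Schauder statement to the Schauder fixed point theorem, which I take as the foundational tool: a continuous self-map of a nonempty closed convex subset $C$ of a Banach space whose image is relatively compact possesses a fixed point. By hypothesis the set $\{x\in{\cal B}\colon x=\sigma{\cal T}x \text{ for some }\sigma\in[0,1]\}$ is bounded, so there is a constant $M>0$ with $\|x\|<M$ for every such $x$. I would fix any radius $R>M$, write $\overline{B}_R=\{x\in{\cal B}\colon \|x\|\le R\}$, introduce the radial retraction $r_R\colon{\cal B}\to\overline{B}_R$ defined by $r_R(y)=y$ when $\|y\|\le R$ and $r_R(y)=Ry/\|y\|$ when $\|y\|>R$, and set ${\cal T}_R=r_R\circ{\cal T}$.

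The first block of routine verifications establishes that ${\cal T}_R$ satisfies the hypotheses of Schauder's theorem on the closed bounded convex set $\overline{B}_R$: the map $r_R$ is continuous, so ${\cal T}_R$ is continuous as a composition; ${\cal T}_R$ maps $\overline{B}_R$ into itself by construction; and ${\cal T}_R$ is compact, since ${\cal T}$ sends the bounded set $\overline{B}_R$ into a relatively compact set and $r_R$ is continuous. If one prefers the variant of Schauder's theorem that demands a compact convex domain, I would instead restrict ${\cal T}_R$ to the closed convex hull of ${\cal T}_R(\overline{B}_R)$, which is compact by Mazur's theorem and mapped into itself. Either way, Schauder's theorem yields a fixed point $x^\ast\in\overline{B}_R$ with $x^\ast={\cal T}_R(x^\ast)=r_R({\cal T}x^\ast)$.

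The crux is to upgrade this into a genuine fixed point of ${\cal T}$ by invoking the a priori bound. I would argue by contradiction: suppose $\|{\cal T}x^\ast\|>R$. Then the retraction acts nontrivially, $x^\ast=R\,{\cal T}x^\ast/\|{\cal T}x^\ast\|$, so $\|x^\ast\|=R$, and setting $\sigma=R/\|{\cal T}x^\ast\|\in(0,1)$ gives $x^\ast=\sigma{\cal T}x^\ast$. Hence $x^\ast$ lies in the set controlled by the hypothesis, forcing $\|x^\ast\|<M<R$, which contradicts $\|x^\ast\|=R$. Therefore $\|{\cal T}x^\ast\|\le R$, the retraction is the identity at ${\cal T}x^\ast$, and $x^\ast={\cal T}x^\ast$, which is the desired fixed point.

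I expect the main obstacle to be not any single calculation but the careful bookkeeping around the retraction: verifying the continuity (and, if quantitative control is wanted, the Lipschitz bound) of $r_R$ in a general Banach space, and confirming that the compactness and convexity hypotheses of Schauder's theorem are genuinely met, possibly after passing to the closed convex hull. The contradiction step is short, yet it is the conceptual heart of the argument, since it is precisely where the a priori bound on solutions of $x=\sigma{\cal T}x$ is converted into the exclusion of the boundary sphere $\|x\|=R$ and hence into a true fixed point of ${\cal T}$.
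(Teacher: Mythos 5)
Your proposal is a correct and complete proof, and it is the standard textbook argument for this result (usually attributed to Schaefer, as a corollary of the Leray--Schauder degree theory). Note, however, that the paper itself offers no proof to compare against: the theorem is quoted verbatim from the reference \cite{AJPP} and used as a black box, with all of the paper's actual work going into verifying its hypotheses for the map ${\cal T}$ defined by (\ref{tmp_p1-1b})--(\ref{tmp_p1-2b}) --- continuity and compactness via the compact embedding $H^1(\Omega)\hookrightarrow L^2(\Omega)$, and the $\sigma$-uniform a priori bound via (\ref{sigma-est-4}) and (\ref{sigma-est-5}). Your reduction to Schauder through the radial retraction $r_R$ is sound in every detail: $r_R$ is continuous (indeed $2$-Lipschitz) on any normed space, ${\cal T}_R=r_R\circ{\cal T}$ maps $\overline{B}_R$ into itself with relatively compact image, the passage to $\overline{\operatorname{co}}\,{\cal T}_R(\overline{B}_R)$ via Mazur's theorem correctly handles the compact-convex-domain form of Schauder (this hull lies in $\overline{B}_R$ and is invariant), and the contradiction step converting the a priori bound into $\|{\cal T}x^\ast\|\le R$ is exactly right. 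One small reading issue you silently fixed: the paper's phrasing ``bounded for some $\sigma\in[0,1]$'' is imprecise, and your interpretation --- boundedness of the full solution set $\{x\in{\cal B}\colon x=\sigma{\cal T}x\ \text{for some}\ \sigma\in[0,1]\}$ --- is the correct (and needed) one, since your contradiction argument must apply to the a priori unknown value $\sigma=R/\|{\cal T}x^\ast\|$.
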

In the construction of the
fixed point  map ${\cal T}$ we use  several regularisations.
First, we introduce a small parameter $\varepsilon>0$ and replace $\lambda_l(S_l)$ and $\hat{\rho}_g(p_g)$ 
by 
$$\lambda_l^{\varepsilon}(S_l)=\lambda_l(S_l)+\varepsilon, \quad
\hat{\rho}_g^\varepsilon(p_g)  = \hat{\rho}_g(p_g) + \varepsilon.
$$ 
Function $\lambda_g(S_l)$ is implicitly regularized with the parameter $\varepsilon$ by addition of a new term 
in the equation for the gas phase (see (\ref{tmp_p1-2b})).

Finally, we use  operator
${\EuScript P}_N$ defined as an orthogonal projector in $L^2(\Omega)$
on the first $N$ eigenvectors of the eigenproblem (see \cite{Khalil-Saad}):
\begin{align}
-\Delta p_i = \lambda_i p_i\quad \text{in} \quad \Omega; \nonumber\\
p_i = 0 \quad \text{on } \quad \Gamma_D;
\label{eigen}\\
\nabla p_i\cdot {\bf n} = 0  \quad \text{on} \quad \Gamma_N, \nonumber
\end{align}
and replace several functions by its projections. 
  
It is easy to verify that the operator ${\EuScript P}_N$ satisfy the following properties:
\begin{itemize} 
 	\item[\Pb{1}]
	There exists a constant $C_N$ such that for all $p\in L^2(\Omega)$ and  $q\in V$ it holds
	\begin{equation*}
	\Vert \nabla {\EuScript P}_N \left[ p\right] \Vert_{L^2(\Omega)}
	\leq C_N \, \Vert  p \Vert_{L^2(\Omega)},  \quad \Vert \nabla {\EuScript P}_N \left[ q\right] \Vert_{L^2(\Omega)} \leq \, \Vert  \nabla q \Vert_{L^2(\Omega)}.
	\end{equation*}
	\item[\Pb{2}]  For all $ p \in V $ we have 
	\begin{equation*}
	\int_ {\Omega} \nabla {\EuScript P}_N \left[ p\right] \cdot \nabla p \, dx = \int_ {\Omega} \lvert \nabla {\EuScript P}_N \left[ p\right] \lvert ^ 2 \, dx. 
	\end{equation*}
	\item[\Pb{3}] For $ p, \varphi \in L^2(\Omega) $ we have
	\begin{equation*}
		\int_{\Omega} {\EuScript P}_N \left[ p\right] \varphi \, dx  = \int_{\Omega} p {\EuScript P}_N \left[ \varphi \right] \, dx.
	\end{equation*}
\end{itemize}

From now on, in order to simplify the notation we will omit the superscript $k$ in 
 (\ref{tmp_p1-1}), (\ref{tmp_p1-2}),  and  assume  $k$, $\delta t$ and $\eta$ being fixed.
 All quantities on preceding time level will be denoted by a star ($u^{k-1}$ replaced by
 $u^*$ etc.).  
In order to simplify further notation we will denote function $S_l$ by $S$ in the rest of the section. 

Let  $p_l^*$ and $p_g^*$ be given functions from $L^2(\Omega)$. We define secondary 
variables as:
$$
S^*=p_c^{-1}(p_g^*-p_l^*), \; u^* = \hat{u}(p_g^*),\; 
 \rho_l^* = \rho_l^{std} + u^* ,\; \rho_g^{\varepsilon,*} = \hat{\rho}_g^{\varepsilon}(p_g^*).
 $$
We define  mapping ${\cal T}\colon L^2(\Omega)\times L^2(\Omega)\to L^2(\Omega)\times L^2(\Omega)$ by 
${\cal T}(\overline{p}_l, \overline{p}_g) = (p_l,p_g)$, where $(p_l,p_g)$ 
is a unique solution of linear system (\ref{tmp_p1-1b})--(\ref{tmp_p1-2b}) below. In this system we use 
the following notations:
\begin{align*}
  \overline{S}=p_c^{-1}(\overline{p}_g-\overline{p}_l),\quad 
  \overline{u} = \hat{u}(\overline{p}_g), \quad
  \overline{\rho}_g^\varepsilon = \hat{\rho}_g^\varepsilon(\overline{p}_g),  \quad 
  \overline{\rho}_l = \rho_l^{std} + \overline{u}.
\end{align*}
We also set $\tilde{p}_g = {\EuScript P}_N[ p_g ]$ and consequently  $\tilde{\overline{p}}_g = {\EuScript P}_N[ \overline{p}_g ]$
which leads to the following shorthand notation:
\[
\tilde{\overline{u}} = \hat{u}(\tilde{\overline{p}}_g) ,\quad 
\tilde{\overline{\rho}}_g^\varepsilon = \hat{\rho}_g^\varepsilon(\tilde{\overline{p}}_g) ,\quad 
\tilde{\overline{\rho}}_l = \rho_l^{std} + \tilde{\overline{u}}.
\]
With this notation the 
linearised and regularized variational problem that define the mapping  ${\cal T}$ is given by the following set of equations:
\begin{align}\nonumber
  \frac{1}{\delta t}\int_{\Omega}\Phi &  (\overline{S}-S^*)\varphi \, dx 
  +\int_{\Omega} [ \lambda_l^{\varepsilon}(\overline{S})\Kb\nabla p_l
    - \Phi \overline{S} \frac{1}{\tilde{\overline{\rho}}_l}D\nabla \tilde{\overline{u}}] \cdot \nabla \varphi \, dx\\ 
	\label{tmp_p1-1b}
	&-\eta\int_{\Omega}[ \nabla\tilde{\overline{p}}_g -  \nabla   \tilde{\overline{p}}_l]  \cdot \nabla \varphi\,  dx
       + \int_{\Omega} \overline{S} F_P \varphi\, dx \\ \nonumber 
       &= \int_{\Omega} F_I \varphi \, dx  +
       \int_{\Omega}\overline{\rho}_l\lambda_l(\overline{S})\Kb \gb \cdot \nabla \varphi\,  dx
 \end{align}
for all $\varphi\in V$ and
\begin{align}\nonumber
  \frac{1}{\delta t}&\int_{\Omega} \Phi \left(\left( \overline{u}  \overline{S} +   \overline{\rho}_g^\varepsilon(1- \overline{S})\right)
  - \left(   u^* S^{*} +   {\rho}_g^{\varepsilon, *} (1-S^{*})\right)\right) \, \psi\, dx \\
    & +\int_{\Omega}\left(
    \tilde{\overline{u}} \lambda_l^{\varepsilon}(\overline{S})\Kb  \nabla p_l + \tilde{\overline{\rho}}_g^\varepsilon \lambda_g(\overline{S}) \Kb \nabla \tilde{\overline{p}}_g 
    + \varepsilon\tilde{\overline{\rho}}_g^\varepsilon \nabla p_g +\Phi \overline{S}\frac{\rho_l^{std}}{\tilde{\overline{\rho}}_l } D\nabla  \tilde{\overline{u}} 
    \right)
    \cdot \nabla \psi \, dx \label{tmp_p1-2b}\\
     \nonumber
     & + \eta\int_{\Omega}\left(\tilde{\overline{\rho}}_g^\varepsilon -  \tilde{\overline{u}}\right)
 \left( \nabla   \tilde{\overline{p}}_g -\nabla \tilde{\overline{p}}_l\right)  \cdot \nabla \psi\, dx
 + \int_{\Omega} (\overline{u} \overline{S} +\overline{\rho}_g^\varepsilon (1-\overline{S})) F_P\psi\, dx \nonumber\\ \nonumber
      & =\int_{\Omega}\left(  \overline{\rho}_l \tilde{\overline{u}} \lambda_l(\overline{S})
	  +  (\tilde{\overline{\rho}}_g^\varepsilon)^2  \lambda_g(\overline{S}) \right) \Kb\gb  \cdot \nabla \psi dx
 \end{align}
for all $\psi \in V$. We note that the equations  (\ref{tmp_p1-1b}) and (\ref{tmp_p1-2b})
are linear and uncoupled. Different terms in these equations  are carefully linearised 
in order to keep the symmetry present in original equations that allows us to use the test functions given bellow by (\ref{N-M-delta})
and the orthogonality \Pb{2}.

First we will show that mapping ${\cal T}$ is well defined. Note that (\ref{tmp_p1-1b}) is a 
linear elliptic problem for the function $p_l$, which can be written as
$A_1(p_l, \varphi) = f_1(\varphi)$
with
\begin{align*}
  A_1(p_l, \varphi) &=  \int_{\Omega} \lambda_l^{\varepsilon} (\overline{S}_l)\Kb\nabla p_l \cdot \nabla \varphi \, dx,
\end{align*}
where the functional $f_1(\varphi)$ is given by the remaining terms in the equation (\ref{tmp_p1-1b}).
Using  boundedness of the functions $\hat{u}$ and $\hat{\rho}_g^\varepsilon$, 
$\tilde{\overline{\rho}}_l \geq \rho_l^{std} - u_{min} >0$
and estimates $\| \nabla \tilde{\overline{\xi}}\|_{L^2(\Omega)} \leq C_N \|\overline{\xi}\|_{L^2(\Omega)}$
for $\xi = p_l, p_g, u$, one can easily prove  boundedness of linear functional $f_1$:
\begin{align*}
|f_1(\varphi)|\leq C \lVert \varphi\rVert_{V},
\end{align*}
for all $\varphi\in V$. By the Lax-Milgram lemma, the equation (\ref{tmp_p1-1b}) has a unique solution $p_l\in V$.  

Similarly, since $p_l$ is known from (\ref{tmp_p1-1b}), the equation (\ref{tmp_p1-2b}) can be written as $A_2(p_g, \psi) = f_2(\psi)$
with
\begin{align}
A_2(p_g,\psi) = \int_{\Omega} \varepsilon\tilde{\overline{\rho}}_g^{\varepsilon}\nabla p_g \cdot \nabla \psi \, dx,
\label{Lax-Milgram2}
\end{align}
where linear functional $f_2(\psi)$ is given by the remaining terms in equation (\ref{tmp_p1-2b}).
Using the same arguments as in estimate for $f_1$  we get the boundedness of $f_2$ 
and  by the Lax-Milgram lemma existence of a unique solution $p_g\in V$ to (\ref{tmp_p1-2b}).
This ensures that the map ${\cal T}$ is well defined on $L^2(\Omega)\times L^2(\Omega)$.

\textbf{Continuity and compactness.} 
Let $(\overline p_{l,n}, \overline{p}_{g,n})$ be a sequence in $(L^{2}(\Omega))^2$ 
that converges to some $(\overline{p}_l, \overline{p}_g)$ in $(L^{2}(\Omega))^2$. 
Then we can find a subsequence such that $(p_{l,n}, p_{g,n})={\cal T}(\overline{p}_{l,n},\overline p_{g,n})$
converges weakly in $H^{1}(\Omega)^2$ to some functions  $(p_{l}, p_{g})$. Using continuity and boundedness off all
the coefficients in (\ref{tmp_p1-1b}), (\ref{tmp_p1-2b}), and continuity of the operator ${\EuScript P}_N$, one can easily prove that 
$(p_l,p_g) = {\cal T} (\overline{p}_l, \overline{p}_g)$. The uniqueness of the solution to 
(\ref{tmp_p1-1b}), (\ref{tmp_p1-2b}) gives the convergence of the whole sequence. This proves the 
continuity of the map ${\cal T}$;
the compactness  follows from the compact embedding of $H^1(\Omega) $ into $ L^2(\Omega)$.

\textbf{A priori estimate.}
 Assume that for chosen  $\sigma\in (0,1]$ there exists a pair $(p_l,p_g)$ satisfying 
$(p_l,p_g)=\sigma{\cal T}(p_l,p_g)$, which can be written as:
\begin{align}\nonumber
  \frac{1}{\delta t}\int_{\Omega}\Phi &  ({S}-S^*)\varphi \, dx 
  +\int_{\Omega} [ \lambda_l^{\varepsilon}({S})\Kb\nabla \frac{p_l}{\sigma}
    - \Phi {S} \frac{1}{\tilde{{\rho}}_l}D\nabla \tilde{{u}} ]\cdot \nabla \varphi \, dx\\ \label{eqsystem1}
       &-\eta\int_{\Omega}[ \nabla\tilde{p}_g -  \nabla\tilde{p}_l ]  \cdot \nabla \varphi\,  dx
       + \int_{\Omega} {S} F_P \varphi\, dx \\ \nonumber 
       &= \int_{\Omega} F_I \varphi \, dx  +
       \int_{\Omega}{\rho}_l\lambda_l({S})\Kb \gb \cdot \nabla \varphi\,  dx
 \end{align}
for all $\varphi\in V$,  and
\begin{align}\nonumber
  \frac{1}{\delta t}&\int_{\Omega} \Phi \left(\left( {u}  {S} +   {\rho}_g^{\varepsilon} (1- {S})\right) - 
  \left(   u^* S^{*} +   \rho_g^{\varepsilon, *}(1-S^{*})\right)\right) \, \psi\, dx \\
    & +\int_{\Omega}\left(
    \tilde{u} \lambda_l^{\varepsilon}(S)\Kb  \nabla \frac{p_l}{\sigma}
    + \tilde{\rho}_g^\varepsilon \lambda_g({S}) \Kb \nabla \tilde{p}_g 
    + \varepsilon\tilde{\rho}_g^\varepsilon \nabla \frac{p_g}{\sigma} 
    +\Phi {S}\frac{\rho_l^{std}}{\tilde{\rho}_l } D\nabla  \tilde{{u}} 
    \right)
    \cdot \nabla \psi \, dx \label{eqsystem2}\\
     \nonumber
     & + \eta\int_{\Omega}\left(\tilde{\rho}_g^\varepsilon -  \tilde{u}\right)
	 \left( \nabla\tilde{p}_g - \nabla\tilde{p}_l\right)  \cdot \nabla \psi\, dx
      + \int_{\Omega} (u {S} +\rho_g^\varepsilon (1-S)) F_P\psi\, dx \nonumber\\ \nonumber
   & =\int_{\Omega}\left(  {\rho}_l \tilde{u} \lambda_l(S)
   +  (\tilde{\rho}_g^\varepsilon)^2  \lambda_g({S}) \right) \Kb\gb  \cdot \nabla \psi dx
 \end{align}
for all $\psi \in V$.
Note that in system (\ref{eqsystem1})--(\ref{eqsystem2}) we have two kinds of secondary quantities:
\[ u = \hat{u}(p_g),\quad \rho_g^\varepsilon = \hat{\rho}_g^\varepsilon(p_g),\quad \rho_l = \rho_l^{std} + u,\]
and
\[ \tilde{u} = \hat{u}( \tilde{p}_g) ,\quad  
\tilde{\rho}_g^\varepsilon = \hat{\rho}_g^\varepsilon( \tilde{p}_g),\quad  
\tilde{\rho}_l = \rho_l^{std} +  \tilde{u} .\]
We will use the test functions $\varphi = p_l - N^\varepsilon(\tilde{p}_g)$ and $\psi = M^\varepsilon(\tilde{p}_g)$
given by
\begin{equation}
 N^\varepsilon(p_g) = \int_0^{p_g} \frac{\hat{u}(\sigma)}{\hat{\rho}_g^\varepsilon(\sigma)}d\sigma, \quad
  M^\varepsilon(p_g) = \int_0^{p_g} \frac{1}{\hat{\rho}_g^\varepsilon(\sigma)}d\sigma .
	\label{N-M-delta}
\end{equation}
For any $p_g\in\mathbb{R}$ they  satisfy $\varepsilon$ dependent bounds:
\begin{align}\label{M-N-bounds-neg}
| N^{\varepsilon}(p_g) | \leq \frac{u_{max}}{\varepsilon} |p_g|, \quad
| M^{\varepsilon}(p_g) | \leq \frac{1}{\varepsilon}|p_g|. 
\end{align}
We get
\begin{align*}
  \frac{1}{\delta t}\int_{\Omega}\Phi &  ({S}-S^*)  (p_l - N^\varepsilon(\tilde{p}_g)) \, dx 
  +\int_{\Omega} [ \lambda_l^{\varepsilon}({S})\Kb\nabla \frac{p_l}{\sigma}
    - \Phi {S} \frac{1}{\tilde{{\rho}}_l}D\nabla \tilde{{u}} ]\cdot (\nabla p_l -  
    \frac{\tilde{u}}{\tilde{\rho}_g^\varepsilon}\nabla \tilde{p}_g) \, dx\\ 
       &-\eta\int_{\Omega}[ \nabla \tilde{p}_g -  \nabla\tilde{p}_l]
       \cdot (\nabla p_l -  \frac{\tilde{u}}{\tilde{\rho}_g^\varepsilon}\nabla \tilde{p}_g)\,  dx
       + \int_{\Omega} {S} F_P (p_l - N^\varepsilon(\tilde{p}_g))\, dx \\ 
       &= \int_{\Omega} F_I  (p_l - N^\varepsilon(\tilde{p}_g)) \, dx  +
       \int_{\Omega}{\rho}_l\lambda_l({S})\Kb \gb \cdot (\nabla p_l - 
       \frac{\tilde{u}}{\tilde{\rho}_g^\varepsilon}\nabla \tilde{p}_g)\,  dx,
 \end{align*}
and
\begin{align*}
  \frac{1}{\delta t}&\int_{\Omega} \Phi \left(\left( {u}  {S} +   {\rho}_g^{\varepsilon}  (1- {S})\right) -
  \left(   u^* S^{*} +   \rho_g^{\varepsilon, *} (1-S^{*})\right)\right) \,  M^\varepsilon(\tilde{p}_g)\, dx \\
    & +\int_{\Omega}\left(    \tilde{{u}} \lambda_l^{\varepsilon}({S})\Kb  \nabla \frac{p_l}{\sigma}
    + \tilde{\rho}_g^\varepsilon \lambda_g({S}) \Kb \nabla \tilde{{p}}_g 
    + \varepsilon{\tilde{\rho}}_g^\varepsilon \nabla \frac{p_g}{\sigma} 
    +\Phi {S}\frac{\rho_l^{std}}{\tilde{{\rho}}_l } D\nabla  \tilde{{u}}     \right)   
    \cdot \frac{1}{\tilde{\rho}_g^\varepsilon}\nabla \tilde{p}_g \, dx \\
     & + \eta\int_{\Omega}\left(\tilde{{\rho}}_g^\varepsilon -  \tilde{{u}}\right)
     \left( \nabla \tilde{p}_g -\nabla \tilde{p}_l \right)  \cdot \frac{1}{\tilde{\rho}_g^\varepsilon}\nabla \tilde{p}_g\, dx
      + \int_{\Omega} ({u} {S} +\rho_g^\varepsilon (1-{S})) F_P M^\varepsilon(\tilde{p}_g)\, dx \\ 
   & =\int_{\Omega}\left(  {\rho}_l \tilde{u} \lambda_l({S})+  (\tilde{\rho}_g^\varepsilon)^2  \lambda_g({S}) \right) \Kb\gb  \cdot \frac{1}{\tilde{\rho}_g^\varepsilon}\nabla \tilde{p}_g\, dx.
 \end{align*}
After summation we get (cancellation of four terms and summation of two terms):
\begin{align*}
  &\int_{\Omega} [  \frac{1}{\sigma}\lambda_l^{\varepsilon}({S})\Kb\nabla p_l\cdot \nabla p_l
    + \frac{\varepsilon}{\sigma} \nabla p_g\cdot \nabla \tilde{p}_g
]\, dx\\ 
 &\int_{\Omega} [  
    - \Phi {S} \frac{1}{\tilde{{\rho}}_l}D\nabla \tilde{{u}}\cdot \nabla p_l 
    +  \Phi {S} \frac{1}{\tilde{\rho}_g^\varepsilon} D\nabla \tilde{{u}} \cdot  \nabla \tilde{p}_g 
    + \lambda_g({S}) \Kb \nabla \tilde{{p}}_g \cdot \nabla \tilde{p}_g
]\, dx\\ 
       &+\eta\int_{\Omega}[ \nabla\tilde{p}_g -  \nabla \tilde{p}_l]  
       \cdot ( \nabla \tilde{p}_g - \nabla p_l )\,  dx
       = - \int_{\Omega} {S} F_P (p_l - N^\varepsilon(\tilde{p}_g))\, dx \\ 
       &+ \int_{\Omega} F_I  (p_l - N^\varepsilon(\tilde{p}_g)) \, dx  +
       \int_{\Omega}{\rho}_l\lambda_l({S})\Kb \gb \cdot \nabla p_l \,  dx
       -   \frac{1}{\delta t}\int_{\Omega}\Phi  ({S}-S^*)  (p_l - N^\varepsilon(\tilde{p}_g)) \, dx\\
   &  -   \frac{1}{\delta t}\int_{\Omega} \Phi \left(\left( {u}  {S} +   {\rho}_g^{\varepsilon}(1- {S})\right) - \left(   u^* S^{*} +   \rho_g^{\varepsilon,*}(1-S^{*})\right)\right) \,  M^\varepsilon(\tilde{p}_g)\, dx \\
   & +\int_{\Omega}  \tilde{\rho}_g^\varepsilon  \lambda_g({S}) \Kb\gb  \cdot\nabla \tilde{p}_g\, dx
     -\int_{\Omega} ({u} {S} +\rho_g^\varepsilon (1-{S})) F_P M^\varepsilon(\tilde{p}_g)\, dx.
 \end{align*}
 By Lemma~\ref{lemma:diss} we have for sufficiently small $\varepsilon$: 
 \begin{align*}
    \Phi {S} \frac{1}{\tilde{\rho}_g^\varepsilon} D\nabla \tilde{{u}} \cdot  \nabla \tilde{p}_g 
    + \lambda_g({S}) \Kb \nabla \tilde{{p}}_g \cdot \nabla \tilde{p}_g
    \geq c_D |\nabla \tilde{u}|^2\\
| \Phi {S} \frac{1}{\tilde{{\rho}}_l}D\nabla \tilde{{u}}\cdot \nabla p_l  | \leq
   \frac{1}{2}\lambda_l^{\varepsilon}({S})\Kb\nabla p_l\cdot \nabla p_l + q c_D |\nabla \tilde{u}|^2,
 \end{align*}
with $0<q<1$, which leads to:
\begin{align*}
  &\int_{\Omega} [  (\frac{1}{\sigma}- \frac{1}{2}) \lambda_l^{\varepsilon}({S})\Kb\nabla p_l\cdot \nabla p_l
  +   (1-q) c_D |\nabla \tilde{u}|^2  + \frac{\varepsilon}{\sigma} \nabla p_g\cdot \nabla \tilde{p}_g ]\, dx\\ 
       &+\eta\int_{\Omega}[ \nabla \tilde{p}_g -  \nabla\tilde{p}_l]  
       \cdot ( \nabla \tilde{p}_g - \nabla p_l )\,  dx
       \leq RHS,
 \end{align*}
where
\begin{align*}
   RHS =& - \int_{\Omega} {S} F_P (p_l - N^\varepsilon(\tilde{p}_g))\, dx + \int_{\Omega} F_I  (p_l - N^\varepsilon(\tilde{p}_g)) \, dx \\
   &-\int_{\Omega} ({u} {S} +\rho_g^\varepsilon (1-{S})) F_P M^\varepsilon(\tilde{p}_g)\, dx 
        -   \frac{1}{\delta t}\int_{\Omega}\Phi  ({S}-S^*)  (p_l - N^\varepsilon(\tilde{p}_g)) \, dx\\
   &  -   \frac{1}{\delta t}\int_{\Omega} \Phi \left(\left( {u}{S} +  \rho_g^{\varepsilon}(1- {S})\right) - \left(   u^* S^{*} + \rho_g^{\varepsilon,*} (1-S^{*})\right)\right) \,  M^\varepsilon(\tilde{p}_g)\, dx \\
   & +\int_{\Omega}{\rho}_l\lambda_l({S})\Kb \gb \cdot \nabla p_l\,  dx+\int_{\Omega}  \tilde{\rho}_g^\varepsilon  \lambda_g({S})  \Kb\gb  \cdot \nabla \tilde{p}_g\, dx.
    \end{align*}
    Using orthogonality of the spectral functions \Pb{2}, multiplying by $\sigma$ and using $\sigma\leq 1$,   we get 
\begin{align*}
  \int_{\Omega} [  \frac{1}{2}  \varepsilon k_{m} |\nabla p_l|^2
  +   \sigma (1-q) c_D |\nabla \tilde{u}|^2  + \varepsilon |\nabla \tilde{p}_g|^2 ]\, dx
       +\eta\sigma \int_{\Omega} |\nabla \tilde{p}_g -  \nabla\tilde{p}_l|^2 \,  dx
       \leq |RHS|. 
 \end{align*}
Since we need an estimate independent of $\sigma$ it is enough to consider 
\begin{align}
\frac{1}{2}  \varepsilon \int_{\Omega} [    k_{m} |\nabla p_l|^2 +  |\nabla \tilde{p}_g|^2 ]\, dx \leq |RHS|.\label{sigma-est-3}
 \end{align}

 In the estimates of the RHS we use  boundedness of the coefficients and  bounds for function $M^{\varepsilon}$ and $N^{\varepsilon}$ 
 given in (\ref{M-N-bounds-neg}).  
For example, we can estimate:
\begin{align*}
  | \int_{\Omega} {S} F_P (p_l - N^\varepsilon(\tilde{p}_g))\, dx | &\leq \int_{\Omega} \left(F_P |p_l|
  + F_P \frac{u_{max}}{\varepsilon} |\tilde{p}_g| \right)\, dx \\
  &\leq \tilde{\varepsilon} \|\nabla p_l\|_{L^2(\Omega)}^2 +\tilde{\varepsilon} \|\nabla \tilde{p}_g\|_{L^2(\Omega)}^2  + C  \|F_P\|_{L^2(\Omega)}^2
\end{align*}
with $\tilde{\varepsilon}$ small enough, depending on $\varepsilon$,
and $C=C(u_{max},C_\Omega,\varepsilon)$, where $C_\Omega$ is the constant from the Poincar\'e inequality.
Note that $C$ is independent of $N$ and $\eta$. 
All the other integrals can be treated in similar way, obtaining 
\begin{align*}
  | RHS | \leq  \tilde{\varepsilon} \|\nabla p_l\|_{L^2(\Omega)}^2 +\tilde{\varepsilon} \|\nabla \tilde{p}_g\|_{L^2(\Omega)}^2  +C,
\end{align*}
where the constant $C$ depends on $\varepsilon$, but it is independent of $\sigma$, $N$ and $\eta$.
As a consequence we get from (\ref{sigma-est-3}) (for $ \tilde{\varepsilon}$ sufficiently small)
\begin{align}
\frac{1}{2}  \varepsilon    \int_{\Omega} [    k_{m} |\nabla p_l|^2  + |\nabla \tilde{p}_g|^2 ]\, dx \leq C,
      \label{sigma-est-4}
 \end{align}
with $C$  independent of $\sigma$, $N$ and $\eta$.

By setting $\psi = p_g$ in  (\ref{eqsystem2}) we get
\begin{align*}
    &\int_{\Omega}
     \varepsilon \tilde{{\rho}}_g^{\varepsilon} |\nabla p_g|^2 \, dx \\
     & =
     -\int_{\Omega}\left(
    \tilde{{u}} \lambda_l^{\varepsilon}({S})\Kb  \nabla p_l
    + \sigma \tilde{{\rho}}_g^{\varepsilon} \lambda_g({S}) \Kb \nabla \tilde{{p}}_g 
    +\sigma \Phi {S}\frac{\rho_l^{std}}{\tilde{{\rho}}_l } D\nabla  \tilde{{u}} 
    \right)
    \cdot  \nabla p_g \, dx \\
      & -\frac{\sigma}{\delta t}\int_{\Omega} \Phi \left(\left( {u}  {S} +   {\rho}_g^{\varepsilon}(1- {S})\right) 
      - \left(   u^* S^{*} +   \rho_g^{\varepsilon,*}(1-S^{*})\right)\right) \, p_g\, dx \\
     &- \sigma\eta\int_{\Omega}\left(\tilde{{\rho}}_g^{\varepsilon} -  \tilde{{u}}\right)
     \left( \nabla \tilde{p}_g -\nabla \tilde{p}_l\right)  \cdot  \nabla p_g\, dx
      - \sigma \int_{\Omega} ({u} {S} +\rho_g^\varepsilon (1-{S})) F_P p_g\, dx \\ 
      & + \sigma \int_{\Omega}\left(  {\rho}_l \tilde{u} \lambda_l({S})+  (\tilde{\rho}_g^\varepsilon)^2  \lambda_g({S}) \right) \Kb\gb  \cdot   \nabla p_g\, dx.
 \end{align*}
Using H\"older and Poincar\'e inequalities we get for any $\tilde{\varepsilon} >0$,
\begin{align*}
  &\varepsilon^2      \int_{\Omega} |\nabla p_g|^2 \, dx 
  \leq  \tilde{\varepsilon} \int_{\Omega} |\nabla p_g|^2 \, dx + C\left( \int_{\Omega}  \left( |\nabla p_l|^2 +  |\nabla \tilde{{u}}|^2
  +  |\nabla \tilde{{p}}_g|^2+  |\nabla \tilde{{p}}_l|^2  \right) \, dx  + 1\right)
  \end{align*}
Using $\lVert \nabla \tilde{u} \lVert _{L^2(\Omega)} \leq M_g \lVert \nabla \tilde{p}_g \lVert _{L^2(\Omega)} $,
$\lVert \nabla \tilde{p}_l \lVert _{L^2(\Omega)} \leq \lVert \nabla {p}_l \lVert _{L^2(\Omega)} $  and (\ref{sigma-est-4}) we obtain
\begin{align}
  \int_{\Omega} |\nabla p_g|^2 \, dx   \leq  C, \label{sigma-est-5}
\end{align}
where $C$ depends on $\varepsilon$ but it is independent of $\sigma$, $N$ and $\eta$.
From (\ref{sigma-est-4}) and (\ref{sigma-est-5}) we conclude that all assumptions of the Schauder fixed point theorem 
are satisfied which proves   the following proposition:
\begin{proposition}
	\label{Prop1}
For given $(p_l^*, p_g^*)\in L^2(\Omega)\times L^{2}(\Omega)$ there exists 
$(p_l, p_g)\in V \times V$ that solve (\ref{eqsystem1-nl}),  (\ref{eqsystem2-nl}):

\begin{align}\nonumber
  \frac{1}{\delta t}\int_{\Omega}\Phi &  ({S}-S^*)\varphi \, dx 
  +\int_{\Omega} [ \lambda_l^{\varepsilon}({S})\Kb\nabla p_l
    - \Phi {S} \frac{1}{\rho_l^{std} + \hat{u}({\EuScript P}_N[ p_g]) }D\nabla \hat{u}({\EuScript P}_N[ p_g]) ]\cdot \nabla \varphi \, dx\\ \label{eqsystem1-nl}
       &-\eta\int_{\Omega}[ \nabla {\EuScript P}_N[ {p}_g] -  \nabla {\EuScript P}_N[ {p}_l]]  \cdot \nabla \varphi\,  dx
       + \int_{\Omega} {S} F_P \varphi\, dx \\ \nonumber 
       &= \int_{\Omega} F_I \varphi \, dx  +
       \int_{\Omega}{\rho}_l\lambda_l({S})\Kb \gb \cdot \nabla \varphi\,  dx
 \end{align}
for all $\varphi\in V$ and
\begin{align}\nonumber
  \frac{1}{\delta t}\int_{\Omega} \Phi (\big( {u}  {S} &+   {\rho}_g^\varepsilon (1- {S})) 
  - (   u^* S^{*} + \rho_g^{\varepsilon, *}(1-S^{*}))\big) \, \psi\, dx 
      +\int_{\Omega}   \hat{u}({\EuScript P}_N[ p_g]) \lambda_l^{\varepsilon}({S})\Kb  \nabla p_l     \cdot \nabla \psi \, dx                                        \\
    & +\int_{\Omega}\left(
    \hat{\rho}_g^\varepsilon ({\EuScript P}_N[ p_g]) \lambda_g({S}) \Kb \nabla {\EuScript P}_N[ p_g] 
    + \varepsilon \hat{\rho}_g^\varepsilon ({\EuScript P}_N[ p_g])  \nabla p_g
    \right)
    \cdot \nabla \psi \, dx \nonumber\\
     & +\int_{\Omega}
    \Phi {S}\frac{\rho_l^{std}}{\rho_l^{std} + \hat{u}({\EuScript P}_N[ p_g]) } D\nabla  \hat{u}({\EuScript P}_N[ p_g])
    \cdot \nabla \psi \, dx \label{eqsystem2-nl}\\
     \nonumber
     & + \eta\int_{\Omega}\left(\hat{\rho}_g^\varepsilon ({\EuScript P}_N[ p_g])  -  \hat{u}({\EuScript P}_N[ p_g])\right)
     \left( \nabla{\EuScript P}_N[ p_g]-\nabla {\EuScript P}_N[ p_l]\right)  \cdot \nabla \psi\, dx\\
    &  + \int_{\Omega} ({u} {S} +\rho_g^\varepsilon  (1-{S})) F_P\psi\, dx \nonumber\\ \nonumber
   & =\int_{\Omega}\left(  {\rho}_l   \hat{u}({\EuScript P}_N[ p_g]) \lambda_l({S})+  (\hat{\rho}_g^\varepsilon ({\EuScript P}_N[ p_g]))^2  \lambda_g({S}) \right) 
   \Kb\gb  \cdot \nabla \psi dx
 \end{align}
for all $\psi \in V$. The secondary variables in equations (\ref{eqsystem1-nl}),  (\ref{eqsystem2-nl}) are given by:
\[ 
   S=p_c^{-1}(p_g-p_l), \quad u = \hat{u}(p_g),\quad \rho_g^\varepsilon = \hat{\rho}_g^\varepsilon(p_g),
   \quad \rho_l = \rho_l^{std} + \hat{u}(p_g).
   \]
\end{proposition}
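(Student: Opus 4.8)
The plan is to obtain a solution of the nonlinear system (\ref{eqsystem1-nl})--(\ref{eqsystem2-nl}) as a genuine fixed point of the map $\mathcal{T}$ constructed above, via the Leray--Schauder fixed point theorem applied on the Banach space $\mathcal{B}=L^2(\Omega)\times L^2(\Omega)$. The first observation is that a fixed point $(p_l,p_g)=\mathcal{T}(p_l,p_g)$ is precisely a solution of the nonlinear problem: at such a point the linearization data coincide with the unknowns, $\overline{p}_l=p_l$ and $\overline{p}_g=p_g$, so that $\overline{S}=S$, $\overline{u}=u$, $\overline{\rho}_g^{\varepsilon}=\rho_g^{\varepsilon}$ and $\tilde{\overline{p}}_g=\tilde{p}_g={\EuScript P}_N[p_g]$, and the defining relations (\ref{tmp_p1-1b})--(\ref{tmp_p1-2b}) reduce term by term to (\ref{eqsystem1-nl})--(\ref{eqsystem2-nl}). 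It therefore suffices to verify the three hypotheses of the fixed point theorem for $\mathcal{T}$.

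Two of these are already in place: $\mathcal{T}$ is well defined, since each of the two decoupled linear elliptic problems is uniquely solvable by Lax--Milgram (using $\tilde{\overline{\rho}}_l\geq\rho_l^{std}-u_{min}>0$ and the projector bound \Pb{1}), and $\mathcal{T}$ is continuous and compact (continuity from the stability of the linear problems under $L^2$-convergence of the data together with continuity of ${\EuScript P}_N$, compactness from the compact embedding $H^1(\Omega)\hookrightarrow L^2(\Omega)$). The remaining and decisive hypothesis is the boundedness in $\mathcal{B}$ of the set of all $(p_l,p_g)$ satisfying $(p_l,p_g)=\sigma\mathcal{T}(p_l,p_g)$ for some $\sigma\in[0,1]$. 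For $\sigma=0$ this set reduces to the origin; for $\sigma\in(0,1]$ it is controlled by the energy estimate.

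The core of that estimate, and the step I expect to be the main obstacle, is the uniform-in-$\sigma$ a priori bound. The idea is to test (\ref{eqsystem1}) with $\varphi=p_l-N^{\varepsilon}(\tilde{p}_g)$ and (\ref{eqsystem2}) with $\psi=M^{\varepsilon}(\tilde{p}_g)$, the functions in (\ref{N-M-delta}) being designed so that, after addition, the accumulation and source contributions built from $M^{\varepsilon},N^{\varepsilon}$ collapse and the mixed convective terms cancel; the orthogonality \Pb{2} then converts the projected gradients into genuine $L^2$ norms. The delicate point is the degenerate cross-diffusion term $\Phi S\frac{1}{\rho_l}D\nabla u\cdot\nabla p_l$, whose sign is uncontrolled and whose coefficient carries the liquid saturation; this is absorbed precisely by Lemma~\ref{lemma:diss}, i.e.\ by the low-solubility hypothesis (\ref{low_dissolution}), which provides the coercive lower bound $c_D|\nabla\tilde{u}|^2$ for the gas diffusion--mobility combination together with the matching upper estimate for the cross term with a factor $q<1$. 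Multiplying by $\sigma$ and using $\sigma\leq1$ yields (\ref{sigma-est-4}); a separate test with $\psi=p_g$ in (\ref{eqsystem2}), combined with $\|\nabla\tilde{u}\|_{L^2(\Omega)}\leq M_g\|\nabla\tilde{p}_g\|_{L^2(\Omega)}$ and \Pb{1}, upgrades this to the bound (\ref{sigma-est-5}) on $\nabla p_g$.

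Finally, since $p_l,p_g\in V$ vanish on $\Gamma_D$ with $|\Gamma_D|>0$, the Poincar\'e inequality turns the gradient bounds (\ref{sigma-est-4})--(\ref{sigma-est-5}) into a bound for $\|(p_l,p_g)\|_{\mathcal{B}}$ that is independent of $\sigma$ (and of $N$ and $\eta$). Hence the set $\{(p_l,p_g):(p_l,p_g)=\sigma\mathcal{T}(p_l,p_g),\ \sigma\in[0,1]\}$ is bounded, all hypotheses of the Leray--Schauder theorem are satisfied, and $\mathcal{T}$ possesses a fixed point $(p_l,p_g)\in V\times V$. By the identification made in the first paragraph this fixed point solves (\ref{eqsystem1-nl})--(\ref{eqsystem2-nl}), which proves the proposition.
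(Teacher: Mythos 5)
Your proposal is correct and follows essentially the same route as the paper: the Leray--Schauder theorem on $L^2(\Omega)\times L^2(\Omega)$ with the same map $\mathcal{T}$, the test functions $\varphi=p_l-N^\varepsilon(\tilde p_g)$, $\psi=M^\varepsilon(\tilde p_g)$, absorption of the degenerate cross-diffusion term via Lemma~\ref{lemma:diss}, the orthogonality \Pb{2}, and the supplementary test $\psi=p_g$ yielding (\ref{sigma-est-4})--(\ref{sigma-est-5}). The only slight imprecision is your remark that the accumulation terms ``collapse'': at this stage of the argument they do not reduce to an entropy functional (that happens only later, in the $\varepsilon\to 0$ step) but are simply moved to the right-hand side and bounded using the $\varepsilon$-dependent estimates (\ref{M-N-bounds-neg}) on $M^\varepsilon$ and $N^\varepsilon$, which suffices here since the constant is allowed to depend on $\varepsilon$ as long as it is independent of $\sigma$, $N$ and $\eta$.
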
 
Note that $p_l$ and $p_g$ depend on $\eta$, $\varepsilon$ and $N$. However, we omit this dependency in writing for simplicity 
until passing to the limit in some of the parameters, when parameter of interest will be denoted explicitly.

\subsection{Step 2. Limit as $N\to\infty$}

By applying a priori estimates (\ref{sigma-est-4}) and (\ref{sigma-est-5}) 
given in the proof of  Proposition~\ref{Prop1} for $\sigma = 1$ we get the following result:
\begin{corollary}\label{cor-1}
  There is a constant $C>0$ independent of $N$ and $\eta$ (but depending  on $\varepsilon$) 
  such that any solutions $(p_l^N, p_g^N)\in V\times V$ to problem  (\ref{eqsystem1-nl}),  (\ref{eqsystem2-nl})  satisfy 
  \[  \int_{\Omega} |\nabla {\EuScript P}_N[p_g^N] |^2 \, dx,\;\int_{\Omega} |\nabla p_g^N|^2 \, dx,
   \;\int_{\Omega} |\nabla p_l^N|^2 \, dx   \leq  C.\]
\end{corollary}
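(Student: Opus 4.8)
The plan is to observe that the solution $(p_l^N, p_g^N)$ produced by Proposition~\ref{Prop1} is, by construction, a fixed point of the map ${\cal T}$, so it satisfies $(p_l^N, p_g^N) = {\cal T}(p_l^N, p_g^N)$. This is exactly the endpoint case $\sigma = 1$ of the one-parameter family $(p_l, p_g) = \sigma{\cal T}(p_l, p_g)$, $\sigma \in (0,1]$, written out in (\ref{eqsystem1})--(\ref{eqsystem2}): setting $\sigma = 1$ turns $\nabla(p_l/\sigma)$ into $\nabla p_l$ and $\nabla(p_g/\sigma)$ into $\nabla p_g$, so that (\ref{eqsystem1})--(\ref{eqsystem2}) coincides term by term with (\ref{eqsystem1-nl})--(\ref{eqsystem2-nl}). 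In particular the quantity $\tilde{p}_g$ appearing in the a priori estimates is nothing but ${\EuScript P}_N[p_g^N]$.

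First I would recall that the a priori estimates (\ref{sigma-est-4}) and (\ref{sigma-est-5}), obtained using the test functions $\varphi = p_l - N^\varepsilon(\tilde{p}_g)$ and $\psi = M^\varepsilon(\tilde{p}_g)$, the orthogonality property \Pb{2}, and the dissipation inequalities of Lemma~\ref{lemma:diss}, were established with a constant $C$ that depends on $\varepsilon$ but is independent of $\sigma$, $N$ and $\eta$. Since $\sigma = 1$ lies in the admissible range $(0,1]$, these estimates hold verbatim for the pair $(p_l^N, p_g^N)$.

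It then remains only to read off the three bounds. Both integrands in (\ref{sigma-est-4}) being nonnegative, that estimate bounds $\int_\Omega |\nabla p_l^N|^2$ by $2C/(\varepsilon k_m)$ and $\int_\Omega |\nabla {\EuScript P}_N[p_g^N]|^2$ by $2C/\varepsilon$, while (\ref{sigma-est-5}) directly supplies the bound on $\int_\Omega |\nabla p_g^N|^2$; taking the maximum of the three resulting constants gives the single $C$ claimed. Because $\varepsilon$ is held fixed at this stage of the construction, its appearance in the constant is harmless, and the crucial uniformity in $N$ and $\eta$ is inherited directly from the uniformity already built into (\ref{sigma-est-4}) and (\ref{sigma-est-5}).

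There is no genuine obstacle here: the corollary is a direct specialization of estimates proved earlier. The only points demanding any care are confirming that the fixed point corresponds to the endpoint $\sigma = 1$, which is included in the interval $(0,1]$ used for the Leray--Schauder argument, and checking that the $\varepsilon$-dependence of the constant does not interfere with the claimed independence from $N$ and $\eta$ --- both of which are immediate from the way the estimates were derived.
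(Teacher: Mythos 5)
Your proposal is correct and follows essentially the same route as the paper, which likewise obtains Corollary~\ref{cor-1} simply by applying the a priori estimates (\ref{sigma-est-4}) and (\ref{sigma-est-5}) from the proof of Proposition~\ref{Prop1} at the endpoint $\sigma=1$, using that the constants there depend on $\varepsilon$ but not on $\sigma$, $N$ or $\eta$. Your additional checks --- identifying $\tilde{p}_g$ with ${\EuScript P}_N[p_g^N]$ and reading off the three bounds from the nonnegative integrands --- are exactly the (implicit) content of the paper's one-line argument.
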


We consider behaviour of   the solution to (\ref{eqsystem1-nl})--(\ref{eqsystem2-nl}), $p_l^N$ and $p_g^N$,  
as $N\to\infty$, while all other  
regularization parameters, $\varepsilon$, $\eta$ and $\delta t$, are kept constant. 
We also denote the secondary variables as
\[  u^N = \hat{u}(p_g^N),\quad \rho_g^N = \hat{\rho}_g^\varepsilon(p_g^N),\quad \rho_l^N = \rho_l^{std} + \hat{u}(p_g^N), \quad S^N = p_c^{-1}(p_g^N - p_l^N).\]
The uniform bounds (with respect to $N$) from Corollary~\ref{cor-1} imply that there is a subsequence, still denoted by $N$, such that
 as $N\to\infty$,
\begin{align*}
  p_g^N &\to p_g \quad \text{weakly in $V$, strongly in}\; L^2(\Omega)\; \text{ and a.e. in} \; \Omega,\\
  p_l^N &\to p_l \quad \text{weakly in $V$, strongly in}\; L^2(\Omega)\; \text{ and a.e. in} \; \Omega,\\
  {\EuScript P}_N[p_g^N] &\to \xi \quad \text{ weakly in $V$, strongly in}\; L^2(\Omega)\; \text{ and a.e. in} \; \Omega,
\end{align*}
for some $p_l, p_g, \xi \in V$. Using property \Pb{3} of the projection operator 
we find that $\xi = p_g$.  Due to properties \Hb{5} and \Hb{6} we have 
\begin{align*}
  S^N \to S = p_c^{-1}(p_g - p_l)\quad \text{ a.e. in } \; \Omega,\\
  u^N \to u = \hat{u}(p_g) \quad \text{weakly in $V$  and a.e. in } \; \Omega,\\
  \rho_g^N \to \rho_g^\varepsilon = \hat{\rho}_g^\varepsilon(p_g) \quad \text{ a.e. in } \; \Omega,\\
  1/\rho_l^N \to 1/\rho_l = 1/(\rho_l^{std} + \hat{u}(p_g)) \quad \text{ a.e. in } \; \Omega.
\end{align*}
This convergences are sufficient to pass to the limit as $N\to \infty$ in (\ref{eqsystem1-nl})--(\ref{eqsystem2-nl}), and we get
\begin{align}\nonumber
  \frac{1}{\delta t}\int_{\Omega}\Phi &  ({S}-S^*)\varphi \, dx 
  +\int_{\Omega} [ \lambda_l^{\varepsilon}({S})\Kb\nabla p_l
    - \Phi {S} \frac{1}{{{\rho}}_l}D\nabla \hat{u}(p_g) ]\cdot \nabla \varphi \, dx\\ \label{eqsystem1-nl-N0}
       &-\eta\int_{\Omega}[ \nabla {p}_g -  \nabla {p}_l ]  \cdot \nabla \varphi\,  dx
       + \int_{\Omega} {S} F_P \varphi\, dx \\ \nonumber 
       &= \int_{\Omega} F_I \varphi \, dx  +
       \int_{\Omega}{\rho}_l\lambda_l({S})\Kb \gb \cdot \nabla \varphi\,  dx
 \end{align}
for all $\varphi\in V$ and
\begin{align}\nonumber
  \frac{1}{\delta t}&\int_{\Omega} \Phi \left(\left( {u}  {S} +   {\rho}_g^\varepsilon(1- {S})\right)
                                             - \left(   u^* S^{*} +   \rho_g^{\varepsilon,*}(1-S^{*})\right)\right) \, \psi\, dx \\
    & +\int_{\Omega}\left( {u} \lambda_l^{\varepsilon}({S})\Kb  \nabla p_l
    +  {\rho}_g^\varepsilon \lambda_g({S}) \Kb \nabla {{p}}_g +  \varepsilon {\rho}_g^\varepsilon \nabla {{p}}_g 
    +  \Phi {S}\frac{\rho_l^{std}}{\rho_l} D\nabla  {u} \right)
    \cdot \nabla \psi \, dx \label{eqsystem2-nl-N0}\\
     \nonumber
     & + \eta\int_{\Omega}\left({\rho}_g^\varepsilon  -  {u}\right) \left( \nabla p_g-\nabla p_l\right)  \cdot \nabla \psi\, dx
      + \int_{\Omega} ({u} {S} +\rho_g^\varepsilon (1-{S})) F_P\psi\, dx \nonumber\\ \nonumber
   & =\int_{\Omega}\left(  {\rho}_l{u} \lambda_l({S})+  ({\rho}_g^\varepsilon)^2  \lambda_g({S}) \right) \Kb\gb  \cdot \nabla \psi dx
 \end{align}
for all $\psi \in V$, where
\begin{align}\label{eqsystem3-nl-N0}
    u = \hat{u}(p_g),\quad \rho_g^\varepsilon = \hat{\rho}_g^\varepsilon(p_g),\quad \rho_l = \rho_l^{std} + \hat{u}(p_g), \quad S = p_c^{-1}(p_g - p_l).
\end{align}

We have proved the following result. 
\begin{proposition}
For given $(p_l^*, p_g^*)\in L^2(\Omega)\times L^{2}(\Omega)$ there exists 
$(p_l, p_g)\in V \times V$ that solve problem (\ref{eqsystem1-nl-N0}), (\ref{eqsystem2-nl-N0}) and (\ref{eqsystem3-nl-N0}). 
  \label{lema-egz-eps}
\end{proposition}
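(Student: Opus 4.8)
The plan is to realise the solution of the reduced ($N$-free) system as the limit, when $N\to\infty$, of the finite-dimensional solutions furnished by Proposition~\ref{Prop1}, keeping $\varepsilon$, $\eta$ and $\delta t$ fixed. First I would apply Proposition~\ref{Prop1} to obtain, for every $N$, a pair $(p_l^N,p_g^N)\in V\times V$ solving (\ref{eqsystem1-nl}), (\ref{eqsystem2-nl}). Corollary~\ref{cor-1} then supplies bounds on $\|\nabla p_l^N\|_{L^2(\Omega)}$, $\|\nabla p_g^N\|_{L^2(\Omega)}$ and $\|\nabla{\EuScript P}_N[p_g^N]\|_{L^2(\Omega)}$ that are uniform in $N$ (and in $\eta$). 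Since $V$ embeds compactly into $L^2(\Omega)$ (Rellich), these bounds yield a subsequence, still indexed by $N$, along which $p_l^N$, $p_g^N$ and ${\EuScript P}_N[p_g^N]$ converge weakly in $V$, strongly in $L^2(\Omega)$ and a.e.\ in $\Omega$, to limits $p_l,p_g,\xi\in V$.

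Next I would identify the limit of the projected pressures. Writing $\|{\EuScript P}_N[p_g^N]-p_g\|_{L^2(\Omega)}\le\|{\EuScript P}_N[p_g^N-p_g]\|_{L^2(\Omega)}+\|{\EuScript P}_N[p_g]-p_g\|_{L^2(\Omega)}$ and using that ${\EuScript P}_N$ is an $L^2$-orthogonal projector (hence a contraction) together with the fact that ${\EuScript P}_N[p_g]\to p_g$, one obtains ${\EuScript P}_N[p_g^N]\to p_g$ strongly in $L^2(\Omega)$; hence the weak-$V$ limit satisfies $\xi=p_g$ and $\nabla{\EuScript P}_N[p_g^N]\rightharpoonup\nabla p_g$ weakly in $L^2(\Omega)$, which property \Pb{3} can also be used to confirm. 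By continuity and uniform boundedness of $p_c^{-1}$, $\hat u$ and $\hat\rho_g^\varepsilon$ from \Hb{4}--\Hb{6}, the secondary variables converge a.e.\ and boundedly: $S^N\to S$, $u^N\to u$, $\rho_g^N\to\rho_g^\varepsilon$ and $1/\rho_l^N\to 1/\rho_l$, and the same holds for their projected counterparts since ${\EuScript P}_N[p_g^N]\to p_g$. Because $\hat u$ is Lipschitz with constant $M_g$, the function $\hat u({\EuScript P}_N[p_g^N])$ is bounded in $V$ and converges weakly in $V$ to $\hat u(p_g)$, so $\nabla\hat u({\EuScript P}_N[p_g^N])\rightharpoonup\nabla\hat u(p_g)$ weakly in $L^2(\Omega)$.

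Finally I would pass to the limit term by term in (\ref{eqsystem1-nl}), (\ref{eqsystem2-nl}). The decisive structural observation is that every diffusive and convective integrand is a \emph{weak-times-strong} pairing: the only weakly convergent factor is a solution gradient ($\nabla p_l^N$, $\nabla p_g^N$, or $\nabla{\EuScript P}_N[p_g^N]$, equivalently $\nabla\hat u({\EuScript P}_N[p_g^N])$), whereas the accompanying coefficient is a bounded, a.e.\ convergent Nemytskii image of the pressures multiplied by the \emph{fixed} test-function gradient $\nabla\varphi$ or $\nabla\psi$. By dominated convergence each coefficient-times-test-gradient factor converges strongly in $L^2(\Omega)$, so weak-times-strong convergence disposes of every such product; the zero-order terms (the $F_P$, $F_I$ and the $\delta t$-difference terms) converge by dominated convergence using the uniform bounds on $u^N$, $\rho_g^N$ and $S^N$. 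Collecting the limits gives exactly (\ref{eqsystem1-nl-N0}), (\ref{eqsystem2-nl-N0}) with (\ref{eqsystem3-nl-N0}), and since $p_l,p_g\in V$ the proposition follows. I expect the main obstacle to be precisely preserving this weak-times-strong structure through the projection, namely the correct identification $\xi=p_g$ and of the weak $V$-limit of $\nabla\hat u({\EuScript P}_N[p_g^N])$, since no compactness of the gradients themselves is available at this stage; had any term coupled two weakly convergent gradients, a monotonicity/Minty argument would be required, but the careful linearisation of (\ref{tmp_p1-2b}) is exactly what avoids this.
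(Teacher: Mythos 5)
Your proposal is correct and follows essentially the same route as the paper: uniform bounds from Corollary~\ref{cor-1}, extraction of weak-$V$/strong-$L^2$/a.e.\ convergent subsequences, identification of the limit of ${\EuScript P}_N[p_g^N]$ as $p_g$ (the paper invokes \Pb{3}; your contraction-plus-completeness argument is an equivalent way to get the same fact, as you note yourself), and then a weak-times-strong passage to the limit in every term of (\ref{eqsystem1-nl}), (\ref{eqsystem2-nl}). The only detail you leave implicit — that $\nabla {\EuScript P}_N[p_l^N]$ appearing in the $\eta$-terms is also uniformly bounded (by the second estimate in \Pb{1} together with Corollary~\ref{cor-1}) and its weak limit is identified as $\nabla p_l$ by the same argument — is likewise left tacit in the paper, so this is not a genuine gap.
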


\subsection{Step 3. Limit as $\varepsilon\to 0$ }


For passage to the limit as  $\varepsilon\to 0$ we need to refine a priori estimates 
since they are not independent of  $\varepsilon$. This will be achieved by using the test functions
$\varphi = p_l - N^\varepsilon(p_g)$ and $\psi = M^\varepsilon(p_g)$ in (\ref{eqsystem1-nl-N0}) and (\ref{eqsystem2-nl-N0}),
which lead to the following estimate:

\begin{lemma}
 There is a constant $C$ independent of $\delta t$, $\eta$ and $\varepsilon$ such that each solution to the problem 
(\ref{eqsystem1-nl-N0}), (\ref{eqsystem2-nl-N0}) and (\ref{eqsystem3-nl-N0}) satisfies: 
\begin{align}
 & \frac{1}{\delta t}\int_{\Omega}\Phi [  {\cal E}^\varepsilon(p_l, p_g) -  {\cal E}^\varepsilon(p_l^*, p_g^*) ]\, dx \nonumber\\
 &  + \int_{\Omega} [ \lambda_l({S}) \Kb\nabla p_l \cdot \nabla p_l 
  +\lambda_g(S) \Kb \nabla p_g \cdot \nabla p_g + c_D |\nabla u|^2  + \varepsilon |\nabla p_g|^2 ]\, dx \label{uni-est}\\
& + \eta \int_{\Omega} |\nabla p_g - \nabla p_l|^2 \, dx
 \leq  C ,\nonumber
\end{align}
where the function ${\cal E}^\varepsilon$ is given by 
\begin{equation}
 \label{E-final-delta}
\begin{split}
  {\cal E}^\varepsilon(p_l, p_g) &=   S[ \hat{u}(p_g)  M^\varepsilon(p_g) - N^\varepsilon(p_g)] 
  + (1-S)  [\hat{\rho}^\varepsilon_g(p_g) M^\varepsilon(p_g) - p_g ] \\
  &-\int_0^S  p_c(\sigma)\,d\sigma.
\end{split}
\end{equation}
  \label{lema-uni-est}
\end{lemma}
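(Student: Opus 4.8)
The plan is to reproduce, at the level of the nondegenerate system (\ref{eqsystem1-nl-N0})--(\ref{eqsystem2-nl-N0}), the formal energy balance behind identity (\ref{TSTF-1}), and then to bound the resulting source and gravity contributions uniformly in all three parameters. First I insert $\varphi = p_l - N^\varepsilon(p_g)$ into (\ref{eqsystem1-nl-N0}) and $\psi = M^\varepsilon(p_g)$ into (\ref{eqsystem2-nl-N0}), with $M^\varepsilon,N^\varepsilon$ as in (\ref{N-M-delta}), and add. Since $\nabla N^\varepsilon(p_g) = (\hat u/\hat\rho_g^\varepsilon)(p_g)\nabla p_g$ and $\nabla M^\varepsilon(p_g) = (1/\hat\rho_g^\varepsilon)(p_g)\nabla p_g$, the convective cross terms $\pm(\hat u/\hat\rho_g^\varepsilon)\lambda_l^\varepsilon(S)\Kb\nabla p_l\cdot\nabla p_g$ cancel, the gravity cross terms cancel, the $\eta$-terms combine into $\eta\int_\Omega|\nabla(p_g-p_l)|^2\,dx$, and, using $\rho_l = \rho_l^{std}+u$, the two Fickian terms collapse to $-\Phi S\frac{1}{\rho_l}D\nabla u\cdot\nabla p_l + \Phi S\frac{D}{\rho_g^\varepsilon}\nabla p_g\cdot\nabla u$. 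On the gradient side there then survive $\lambda_l^\varepsilon(S)\Kb\nabla p_l\cdot\nabla p_l + \lambda_g(S)\Kb\nabla p_g\cdot\nabla p_g + \varepsilon|\nabla p_g|^2 + \eta|\nabla(p_g-p_l)|^2$ together with those two Fickian terms.

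Next I turn the Fickian terms into the dissipation $c_D|\nabla u|^2$ via Lemma~\ref{lemma:diss}, which stays valid with $\rho_g^\varepsilon$ in place of $\rho_g$ for $\varepsilon$ small. Estimate (\ref{est:diss-1a}) absorbs $-\Phi S\frac{1}{\rho_l}D\nabla u\cdot\nabla p_l$ into $\frac{1}{2}\lambda_l(S)\Kb\nabla p_l\cdot\nabla p_l$ (dominated by the available $\lambda_l^\varepsilon = \lambda_l+\varepsilon$) at the cost of $qc_D|\nabla u|^2$, while (\ref{est:diss-1}) combines the gas Darcy term with $\Phi S\frac{D}{\rho_g^\varepsilon}\nabla p_g\cdot\nabla u$ to yield $c_D|\nabla u|^2$. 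Because (\ref{low_dissolution}) is a strict inequality, I apply (\ref{est:diss-1}) only to a fixed fraction of $\lambda_g(S)\Kb\nabla p_g\cdot\nabla p_g$, retaining a positive multiple of it; this surplus is indispensable both for (\ref{uni-est}) and for absorbing the gas gravity term below. The gradient part is thus bounded below by a positive combination of $\lambda_l(S)\Kb\nabla p_l\cdot\nabla p_l$, $\lambda_g(S)\Kb\nabla p_g\cdot\nabla p_g$, $c_D|\nabla u|^2$, $\varepsilon|\nabla p_g|^2$ and $\eta|\nabla(p_g-p_l)|^2$.

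For the time terms $\frac{1}{\delta t}\int_\Omega\Phi(S-S^*)(p_l-N^\varepsilon(p_g))\,dx + \frac{1}{\delta t}\int_\Omega\Phi(m-m^*)M^\varepsilon(p_g)\,dx$, with $m = uS + \rho_g^\varepsilon(1-S)$, I want the lower bound $\frac{1}{\delta t}\int_\Omega\Phi[{\cal E}^\varepsilon(p_l,p_g) - {\cal E}^\varepsilon(p_l^*,p_g^*)]\,dx$ with ${\cal E}^\varepsilon$ as in (\ref{E-final-delta}). This is the discrete, $\varepsilon$-regularized counterpart of (\ref{TSTF-1}): viewing ${\cal E}^\varepsilon$ as a function of the conserved quantities $(S,m)$ whose gradient is $(p_l-N^\varepsilon(p_g),M^\varepsilon(p_g))$, the desired bound is exactly the tangent-plane inequality for a convex function. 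I would establish it by the same pointwise estimates as in Lemma~\ref{lema-E-est}, where the monotonicity of $\hat u$ and $\hat\rho_g^\varepsilon$ and the convexity of $-\int_0^S p_c$ (from $p_c'<0$) replace a Hessian computation.

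The remaining, and I expect most delicate, task is to bound the source and gravity terms by a constant independent of $\delta t$, $\eta$ and, crucially, $\varepsilon$ — which is what distinguishes (\ref{uni-est}) from the $\varepsilon$-dependent bound used earlier for existence. The surviving gravity terms $\int_\Omega\rho_l\lambda_l(S)\Kb\gb\cdot\nabla p_l\,dx$ and $\int_\Omega\rho_g^\varepsilon\lambda_g(S)\Kb\gb\cdot\nabla p_g\,dx$ are absorbed by Young's inequality into the retained $\lambda_l(S)\Kb\nabla p_l\cdot\nabla p_l$ and $\lambda_g(S)\Kb\nabla p_g\cdot\nabla p_g$. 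On $\{p_g\ge 0\}$ the bounds (\ref{M-N-bounds}) hold $\varepsilon$-uniformly (since $\hat\rho_g^\varepsilon\ge\hat\rho_g$ gives $|M^\varepsilon(p_g)|,|N^\varepsilon(p_g)|\le C(p_g^+ +1)$), and, combined with Lemma~\ref{lema-GP-bounds} ($p_g^+\le|p|+C$) and the fundamental equality (\ref{fund-gl1}) — which converts the retained Darcy dissipations into control of $\|\nabla p\|_{L^2}^2$ because $\lambda$ is bounded below by \Hb{3} — the corresponding source integrals are absorbed through Poincar\'e's inequality. The genuinely delicate point is the set $\{p_g<0\}$, on which $M^\varepsilon,N^\varepsilon$ are $\varepsilon$-singular (of size $p_g^2/\varepsilon$); there the singular source contributions must be balanced against the correspondingly singular part of the energy increment ${\cal E}^\varepsilon(p_l,p_g)-{\cal E}^\varepsilon(p_l^*,p_g^*)$, using that both grow like $p_g^2/\varepsilon$ and that the strict monotonicity of $\hat u$ with $\hat u(0)=0$ (\Hb{5}) fixes their relative sign. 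Making this balance quantitative, uniformly in $\varepsilon$ and against $F_I,F_P\in L^2$, is the heart of the estimate; once it is done, choosing the Young parameters small enough yields (\ref{uni-est}).
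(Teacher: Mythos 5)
Your route coincides with the paper's proof up to and including the dissipation and accumulation estimates: same test pair $\varphi=p_l-N^\varepsilon(p_g)$, $\psi=M^\varepsilon(p_g)$, the same cancellations (the summed identity is the paper's (\ref{eqsystem2-sum}), including the collapse of the Fickian terms via $\rho_l=\rho_l^{std}+u$ and the $\eta$-terms combining into $\eta\int_\Omega|\nabla(p_g-p_l)|^2dx$), the same use of Lemma~\ref{lemma:diss}, and the same lower bound of the time terms by $\Phi[{\cal E}^\varepsilon(p_l,p_g)-{\cal E}^\varepsilon(p_l^*,p_g^*)]$ through the three monotonicity inequalities (for $\hat u$, for $\hat\rho_g^\varepsilon$, and $(S^*-S)(p_g-p_l)\geq\int_S^{S^*}p_c(\sigma)d\sigma$), which is precisely your ``tangent-plane'' inequality. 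One small remark on the retained fraction of $\lambda_g(S)\Kb\nabla p_g\cdot\nabla p_g$: you do not need strictness of (\ref{low_dissolution}) for this; since $\nabla u=\hat u'(p_g)\nabla p_g$ with $\hat u'>0$, the cross term $\Phi S D(\rho_g^\varepsilon)^{-1}\nabla u\cdot\nabla p_g$ is pointwise nonnegative, so one splits the nonnegative sum $\lambda_g\Kb\nabla p_g\cdot\nabla p_g+\mathrm{cross}$ into two parts and applies (\ref{est:diss-1}) to one of them, which is how the paper reaches (\ref{main-est-1}). The gravity terms and the sources on $\{p_g\geq0\}$ you handle essentially as the paper does.

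The genuine gap is exactly where you place ``the heart of the estimate'': the source terms on $\{p_g<0\}$. Your proposed balancing of $p_g^2/\varepsilon$-sized source contributions against the singular part of the energy increment is neither carried out nor viable as stated: the energy difference carries the factor $1/\delta t$ and must telescope intact over time levels in Section~\ref{sec:un-est}, so it cannot absorb terms driven by $F_I,F_P\in L^2(Q_T)$ within a single step uniformly in $\delta t$ (and on $\{p_g<0\}$ the gas part of the energy actually vanishes, since there $\hat\rho_g^\varepsilon(p_g)M^\varepsilon(p_g)-p_g=0$, so the singular pieces do not even pair up as you suggest). The correct mechanism is much simpler and purely algebraic. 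On $\{p_g\le 0\}$ one has $\hat\rho_g\equiv0$ by \Hb{6}, hence $\hat\rho_g^\varepsilon\equiv\varepsilon$ and $M^\varepsilon(p_g)=p_g/\varepsilon$, so $\rho_g^\varepsilon M^\varepsilon(p_g)=p_g$ \emph{exactly}: the term $-\int\rho_g^\varepsilon(1-S)F_PM^\varepsilon dx$ becomes $\int F_P|S_gp_g|dx\leq\int F_P(|p|+C)dx$ by Lemma~\ref{lema-GP-bounds}, with no $\varepsilon$ left. The remaining singular candidates die by sign: $N^\varepsilon\geq0$ on all of $\Rb$ (the extension of $\hat u$ is nondecreasing with $\hat u(0)=0$) gives $-\int F_IN^\varepsilon dx\leq0$, while the genuinely singular positive piece $\int SF_PN^\varepsilon dx$ must be grouped with $-\int\hat u(p_g)SF_PM^\varepsilon dx$, since $N^\varepsilon(p_g)-\hat u(p_g)M^\varepsilon(p_g)=\int_0^{p_g}\bigl(\hat u(\sigma)-\hat u(p_g)\bigr)\hat\rho_g^\varepsilon(\sigma)^{-1}d\sigma\leq0$ by monotonicity of $\hat u$. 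Without this grouping and the identity $\rho_g^\varepsilon M^\varepsilon=p_g$ your argument does not close; with them, the region $\{p_g<0\}$ requires no balancing at all.
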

\begin{proof}

After introducing test functions $\varphi = p_l - N^\varepsilon(p_g)$ and $\psi = M^\varepsilon(p_g)$ in (\ref{eqsystem1-nl-N0}) and (\ref{eqsystem2-nl-N0})
and summation of the two equations, we get the following equation: 
\begin{align}\nonumber
\frac{1}{\delta t}\int_{\Omega}\Phi  ({S} & -S^*) (p_l - N^\varepsilon(p_g))  \, dx \\ 
\nonumber
& +\frac{1}{\delta t}\int_{\Omega}  \Phi \left[\left( {u}  {S} +   \hat{\rho}^\varepsilon_g(p_g)(1- {S})\right)
	- \left(   u^* S^{*} +   \rho_g^{\varepsilon,*}(1-S^{*})\right)\right] \, M^\varepsilon(p_g)\, dx \\ 
	\label{eqsystem2-sum}
& +\int_{\Omega} [ \lambda_l^{\varepsilon}({S}) \Kb\nabla p_l \cdot \nabla p_l - \Phi \frac{S}{{{\rho}}_l}D\nabla u \cdot\nabla p_l 
  + \lambda_g({S}) \Kb \nabla {{p}}_g \cdot \nabla p_g + \varepsilon |\nabla p_g|^2   ] \, dx\\ 
	  \nonumber
& +\int_{\Omega} \frac{\Phi S D}{\rho_g^\varepsilon}\nabla u \cdot \nabla p_g \, dx
 + \eta \int_{\Omega} |\nabla p_g - \nabla p_l|^2 \, dx  
	 = RHS ,
\end{align}
where $RHS = I_1 + I_2 + I_3$ with
\begin{align}
I_1 & = \int_{\Omega} F_I (p_l - N^\varepsilon(p_g))\, dx \nonumber \\
I_2 &= -  \int_{\Omega} {S} F_P (p_l - N^\varepsilon(p_g))\, dx 
       - \int_{\Omega} ({u} {S} +\rho_g^\varepsilon (1-{S})) F_P M^\varepsilon(p_g)\, dx \label{eqsystem3-sum}\\
I_3  & =  \int_{\Omega}{\rho}_l\lambda_l({S})\Kb \gb \cdot \nabla p_l \,  dx
	+ \int_{\Omega} \rho_g^\varepsilon  \lambda_g({S}) \Kb\gb  \cdot \nabla p_g dx.\nonumber
\end{align}

First we consider the accumulation terms in (\ref{eqsystem2-sum}) in which we will use shorthand notation:
\begin{align*}
  {\cal J} =   ({S}-S^*) (p_l &- N^\varepsilon(p_g)) + \left[\left( {u}  {S} +   \hat{\rho}^\varepsilon_g(p_g)(1- {S})\right)
	- \left(   u^* S^{*} +   \rho_g^{\varepsilon,*}(1-S^{*})\right)\right] \, M^\varepsilon(p_g)
\end{align*}
Then by simple manipulations we get:
\begin{align*}
 {\cal J}= S p_l &+ S( u  M^\varepsilon(p_g) - N^\varepsilon(p_g)) + (1-S)  \hat{\rho}^\varepsilon_g(p_g) M^\varepsilon(p_g)\\
	&-\left[ S^* p_l^* + S^*( u^*  M^\varepsilon(p_g^*) - N^\varepsilon(p_g^*)) + (1-S^*)  {\rho}^{\varepsilon,*}_g M^\varepsilon(p_g^*) \right] \\
&+ S^*(p_l^* - p_l) + S^* \left(  [ u^* M^\varepsilon(p_g^*)- N^\varepsilon(p_g^*) ] - [u^* M^\varepsilon(p_g)- N^\varepsilon(p_g) ] \right)\\
&+ (1-S^*){\rho}^{\varepsilon,*}_g [ M^\varepsilon(p_g^*) -  M^\varepsilon(p_g) ].
\end{align*}
Note that from \Hb{5} we get
\begin{align*}
  [ u^* M^\varepsilon(p_g^*)- N^\varepsilon(p_g^*) ] - [u^* M^\varepsilon(p_g)- N^\varepsilon(p_g) ] = 
  \hat{u}(p_g^*) \int_{p_g}^{p_g^*} \frac{d\sigma}{\hat{\rho}^\varepsilon_g(\sigma)} -  
  \int_{p_g}^{p_g^*} \frac{\hat{u}(\sigma)}{\hat{\rho}^\varepsilon_g(\sigma)}d\sigma\geq 0,
\end{align*}
and from \Hb{6}
\begin{align*}
   (1-S^*){\rho}^*_g [ M^\varepsilon(p_g^*) -  M^\varepsilon(p_g) ] = (1-S^*) \hat{\rho}^\varepsilon_g(p_g^*) 
   \int_{p_g}^{p_g^*} \frac{d\sigma}{\hat{\rho}^\varepsilon_g(\sigma)} \geq (1-S^*) (p_g^* - p_g),
\end{align*}
leading to 
\begin{align*}
{\cal J}\geq   S( u&  M^\varepsilon(p_g) - N^\varepsilon(p_g)) + (1-S)  (\hat{\rho}^\varepsilon_g(p_g) M^\varepsilon(p_g) - p_g ) \\
&-\left[ S^* ( u^*  M^\varepsilon(p_g^*) - N^\varepsilon(p_g^*)) + (1-S^*)  ({\rho}^{\varepsilon,*}_g M^\varepsilon(p_g^*) -p_g^* ) \right]
+ (S^* -S) (p_g - p_l).
\end{align*}
Using \Hb{4} one can estimate
\begin{align*}
  (S^* -S) (p_g - p_l) \geq  (S^* -S) p_c(S) \geq \int_S^{S^*} p_c(\sigma)\,d\sigma,
\end{align*}
and therefore we can estimate the accumulation term as follows:
\begin{equation}
  \label{E-delta-est}
  \frac{1}{\delta t}\int_{\Omega}\Phi  {\cal J}\, dx  \\
	\geq  \frac{1}{\delta t}\int_{\Omega}\Phi [  {\cal E}^\varepsilon(p_l, p_g) -  {\cal E}^\varepsilon(p_l^*, p_g^*) ]\, dx.
\end{equation}
where the function ${\cal E}^\varepsilon$ is given by (\ref{E-final-delta}).

We consider now the third and the fourth integrals in (\ref{eqsystem2-sum}).  Applying Lemma~\ref{lemma:diss} we get
\begin{align*}
  & \frac{\Phi S D}{\rho_g^\varepsilon} \nabla u \cdot  \nabla p_g + \lambda_g(S) \Kb \nabla p_g \cdot \nabla p_g \geq c_D  |\nabla u|^2 \\
&|\Phi S D \frac{1}{\rho_l}\nabla u \cdot \nabla p_l|\leq \frac{1}{2}\lambda_l^\varepsilon(S) \Kb\nabla p_l\cdot \nabla p_l + q c_D  |\nabla u|^2.
\end{align*}
If we denote the sum of the third and the fourth integral in (\ref{eqsystem2-sum}) by ${\cal I}$, then we easily get:
\begin{equation}
 \label{main-est-1}
\begin{split}
  {\cal I}  &\geq  \int_{\Omega} [\frac{1}{2} \lambda_l^{\varepsilon}({S}) \Kb\nabla p_l \cdot \nabla p_l 
  + \frac{1-q}{2}\lambda_g(S) \Kb \nabla p_g \cdot \nabla p_g ]\, dx \\
 &  +\int_{\Omega} [\frac{1-q}{2} c_D |\nabla u|^2  + \varepsilon |\nabla p_g|^2 ]\, dx. 
\end{split}
\end{equation}
Finally, let us estimate the right hand side  in (\ref{eqsystem2-sum}). 
From $F_I\geq 0$, $p_l \leq p$ and since $N^\varepsilon(p_g)\geq 0$ for $p_g\in \Rb$ we can estimate
\begin{align}
	I_1 = \int_{Q_T} F_I (p_l -N^\varepsilon(p_g))  dx dt \leq \int_{Q_T} F_I p\, dx dt
\leq  C_1 + \frac{\tilde{\varepsilon}}{2} \| p\|_{L^2(Q_T)}^2,
\label{pard-est-rhs-2a}
\end{align}
for an arbitrary $\tilde{\varepsilon}$, and  $C_1= C_1( \| F_I\|_{L^2(Q_T)},\tilde{\varepsilon})$.
 
The term $I_2$ can be rearranged as follows:
\begin{align*}
I_2 =  -  \int_{\Omega} {S} F_P p_l\, dx 
         + \int_{\Omega} {S} F_P  (N^\varepsilon(p_g) - \hat{u}(p_g) M^\varepsilon(p_g))\, dx 
       - \int_{\Omega}  \rho_g^\varepsilon (1-{S}) F_P M^\varepsilon(p_g)\, dx.
\end{align*} 
Since the function $\hat{u}$ is nondecreasing on $\Rb$ we have $N^\varepsilon(p_g) - \hat{u}(p_g)M^\varepsilon(p_g)\leq 0$
 and $F_P \geq 0$ gives
\begin{align}
\int_{Q_T} S F_P \left( N^\varepsilon(p_g) - \hat{u}(p_g)M^\varepsilon(p_g) \right)  dx dt  \leq 0.
\end{align} 
 
From Lemma~\ref{lema-GP-bounds}  we can estimate the terms with the liquid pressure by
the global pressure as follows: 
\begin{align*}
 -\int_{Q_T} S F_P p_l dx dt   \leq \int_{Q_T} F_P (|p|+C) dx dt 
 \leq C_2 + \frac{\tilde{\varepsilon}}{4} \| p\|_{L^2(Q_T)}^2 ,
\end{align*}
for some $\tilde{\varepsilon} >0$ and $C_2= C_2( \| F_P\|_{L^2(Q_T)},\tilde{\varepsilon})$.

The last term in $I_2$ is non positive for $p_g\geq 0$, and in the region where $p_g < 0$ by Lemma~\ref{lema-GP-bounds}  it holds:
\begin{align*}
-\int_{Q_T} \rho_g^\varepsilon(p_g) S_g F_P M^\varepsilon(p_g) dx dt & =  \int_{Q_T} F_P |S_g p_g| dx dt  \\
& \leq \int_{Q_T} F_P (|p| + C) dx dt \leq C_3  + \frac{\tilde{\varepsilon}}{4} \| p\|_{L^2(Q_T)}^2,
\end{align*}
for arbitrary $\tilde{\varepsilon} >0$ and $C_3 = C_3( \| F_P\|_{L^2(Q_T)},\tilde{\varepsilon})$. 
Therefore, we conclude that for arbitrary $\tilde{\varepsilon} >0$ we have the estimate:
\begin{equation}
\label{pard-est-rhs-1c}
I_2 \leq C_4 +  \frac{\tilde{\varepsilon}}{2} \| p\|_{L^2(Q_T)}^2,
\end{equation}
where $C_4 = C_4( \| F_P\|_{L^2(Q_T)},\tilde{\varepsilon})$.

A straightforward estimate, based on boundedness of the gas and the liquid densities  gives:
\begin{align}
\label{pard-est-rhs-4a1}
I_3 \leq C_5 + \hat{\varepsilon}\int_{Q_T}\lambda_g(S)\Kb\nabla p_g\cdot \nabla p_g\, dx dt
      + \hat{\varepsilon} \int_{Q_T}\lambda_l(S)\Kb\nabla p_l\cdot \nabla p_l\, dx dt,
\end{align}
for an arbitrary $\hat{\varepsilon}$. 

The global pressure norm can be estimated by the Poincar\'e inequality and Lemma~\ref{lema-fund-gl1} as follows:
\begin{align}
   \| p\|_{L^2(Q_T)}^2 &\leq C\int_{\Omega}  (\lambda_l({S}) \Kb\nabla p_l \cdot \nabla p_l  +\lambda_g(S) \Kb \nabla p_g \cdot \nabla p_g)\, dx.
   \label{glop-est}
\end{align}

From  estimates (\ref{E-delta-est}),  (\ref{main-est-1}), (\ref{pard-est-rhs-2a}),  (\ref{pard-est-rhs-1c}), (\ref{pard-est-rhs-4a1}) and
(\ref{glop-est}), taking sufficiently small $\tilde{\varepsilon}$ and $\hat{\varepsilon}$  we obtain the estimate (\ref{uni-est}). 
Lemma~\ref{lema-uni-est} is proved.
\end{proof}

\begin{remark}\label{remark-bound}
Note that by using Lemma~\ref{fund-gl1} we can write estimate (\ref{uni-est}) also as follows:
\begin{align*}
 & \frac{1}{\delta t}\int_{\Omega}\Phi [  {\cal E}^\varepsilon(p_l, p_g) -  {\cal E}^\varepsilon(p_l^*, p_g^*) ]\, dx \\
 &  + \int_{\Omega} [ \lambda({S}) \Kb\nabla p \cdot \nabla p 
  +\Kb \nabla \beta(S) \cdot \nabla \beta(S) + c_D |\nabla u|^2  + \varepsilon |\nabla p_g|^2 ]\, dx \\
& + \eta \int_{\Omega} |\nabla p_g - \nabla p_l|^2 \, dx
 \leq  C .
\end{align*}
\end{remark}

Due to the monotonicity of function $\hat{u}$ and definition of function $\hat{\rho}_g^{\varepsilon} $  
we can carry out the same steps as in the proof of Lemma~\ref{lema-E-est} to show
\begin{align}
\label{E-est-lower}
{\cal E}^\varepsilon(p_l, p_g) \geq -M_{p_c},
\end{align}
for $p_l, p_g  \in \mathbb{R}$. Also, we have the  upper bound 
\begin{align}
\label{E-est-upper}
{\cal E}^\varepsilon(p_l^*, p_g^*) \leq C( p_g^* + 1)
\end{align}
since $p_g^*$ satisfies $p_g^* \geq 0$. 
We can apply previous estimates (\ref{E-est-lower}) and (\ref{E-est-upper}) to the estimate (\ref{uni-est})
and obtain that each solution to the problem (\ref{eqsystem1-nl-N0}), (\ref{eqsystem2-nl-N0}) and (\ref{eqsystem3-nl-N0})
with $p_g^* \geq 0$  satisfy the following bound:

\begin{equation}
 \label{uni-est-gl}
\begin{split}
	 &  \int_{\Omega} [ |\nabla p|^2 + |\nabla \beta(S)|^2 + |\nabla u|^2  + \varepsilon |\nabla p_g|^2 ]\, dx
	 + \eta \int_{\Omega} |\nabla p_g - \nabla p_l|^2 \, dx 	 \leq  C ,
\end{split}
\end{equation}
where the constant $C$ is independent of $\varepsilon$ and $\eta$. 

We shall now denote the solution  to the problem 
(\ref{eqsystem1-nl-N0}), (\ref{eqsystem2-nl-N0}) and (\ref{eqsystem3-nl-N0})
by $p_l^\varepsilon$ and $p_g^\varepsilon$. All secondary variables will also be denoted by $\varepsilon$:
\begin{align}\label{secondary-eps}
    u^\varepsilon = \hat{u}(p_g^\varepsilon),\quad 
    \rho_g^\varepsilon = \hat{\rho}_g^\varepsilon(p_g^\varepsilon),\quad \rho_l^\varepsilon = \rho_l^{std} + \hat{u}(p_g^\varepsilon), 
    \quad S^\varepsilon = p_c^{-1}(p_g^\varepsilon - p_l^\varepsilon),
\end{align}
and the global pressure defined by (\ref{glrelation}) is denoted $p^\varepsilon$.

The bounds (\ref{uni-est}) and  (\ref{uni-est-gl}) give the following  bounds uniform with respect to $\varepsilon$:
\begin{align}
(u^{\varepsilon})_\varepsilon \text{ is uniformly bounded in } V, \label{uni_est_eps1}\\
(p^{\varepsilon})_\varepsilon \text{ is uniformly bounded in } V, \label{uni_est_eps2}\\
(\beta(S^{\varepsilon}))_\varepsilon \text{ is uniformly bounded in } H^1(\Omega), \label{uni_est_eps3}\\
(\sqrt{\varepsilon}\nabla p_l^{\varepsilon})_\varepsilon \text{ is uniformly bounded in } L^2(\Omega), \label{uni_est_eps5}\\
(\sqrt{\varepsilon}\nabla p_g^{\varepsilon})_\varepsilon \text{ is uniformly bounded in } L^2(\Omega), \label{uni_est_eps6}\\
(\nabla p_c(S^{\varepsilon}))_\varepsilon \text{ is uniformly bounded in } L^2(\Omega). \label{uni_est_eps7}
\end{align}

\begin{lemma}
\label{conv_eps}
Let $p_l^\varepsilon$ and $p_g^\varepsilon$ be the solution to
 (\ref{eqsystem1-nl-N0}), (\ref{eqsystem2-nl-N0}) and (\ref{eqsystem3-nl-N0})
and let corresponding secondary variables be denoted as in  (\ref{secondary-eps}). Then
there exist functions $p_l, p_g \in L^2(\Omega)$,  $S = p_c^{-1}(p_g - p_l)$ and $p = p_l + \overline{P}(S)\in V$
such that on a subsequence it holds:
\begin{align}
p_l^{\varepsilon}\longrightarrow p_l\quad \text{ a.e in } \Omega, \label{conv_eps7}\\
p_g^{\varepsilon} \longrightarrow p_g\quad \text{ a.e in } \Omega, \label{conv_eps7a}\\
S^{\varepsilon}\longrightarrow S \quad\text{ a.e. in } \Omega, \label{conv_eps5}\\
\rho_g^{\varepsilon} = \hat{\rho}_g^{\varepsilon} (p_g^{\varepsilon}) \longrightarrow \rho_g = \hat{\rho}_g(p_g) \quad\text{ a.e. in } \Omega, \label{conv_eps9}\\
\rho_l^{\varepsilon} = \rho_l^{std} + \hat{u}(p_g^{\varepsilon}) \longrightarrow \rho_l = \rho_l^{std} + \hat{u}(p_g) \quad\text{ a.e. in } \Omega, \label{conv_eps10}\\
 u^\varepsilon \longrightarrow u= \hat{u}(p_g)\quad \text{ weakly in } V \text{ and a.e. in } \Omega, \label{conv_eps1}\\
p^\varepsilon \longrightarrow p\quad \text{ weakly in } V \text{ and a.e. in } \Omega ,\label{conv_eps2}\\
\beta(S^{\varepsilon})\longrightarrow  \beta(S)\quad \text{ weakly in } H^1(\Omega) \text{ and a.e. in } \Omega, \label{conv_eps3}\\
 p_c(S^{\varepsilon}) \longrightarrow p_c(S)\quad \text{ weakly in } H^1(\Omega). \label{conv_eps4}
\end{align}
\end{lemma}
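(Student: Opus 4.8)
The plan is to extract convergent subsequences from the uniform bounds (\ref{uni_est_eps1})--(\ref{uni_est_eps7}) and then to identify the limits, recovering the phase pressures at the very end. First I would use the reflexivity of $V$ and $H^1(\Omega)$ together with (\ref{uni_est_eps1})--(\ref{uni_est_eps3}) to pass to a subsequence along which $u^\varepsilon$ and $p^\varepsilon$ converge weakly in $V$ and $\beta(S^\varepsilon)$ converges weakly in $H^1(\Omega)$. The compact embedding $H^1(\Omega)\hookrightarrow L^2(\Omega)$ then upgrades each of these to strong convergence in $L^2(\Omega)$ and, after a further subsequence, to convergence almost everywhere in $\Omega$. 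The same reasoning applied to the gradient bound (\ref{uni_est_eps7}), combined with an $L^2(\Omega)$ bound on $p_c(S^\varepsilon)$ coming from the global pressure bounds of Lemma~\ref{lema-GP-bounds}, produces a weak $H^1(\Omega)$ limit for $p_c(S^\varepsilon)$. The bounds (\ref{uni_est_eps5}), (\ref{uni_est_eps6}) will only be used later, to control the vanishing regularizing terms $\sqrt{\varepsilon}\,\nabla p_l^\varepsilon$ and $\sqrt{\varepsilon}\,\nabla p_g^\varepsilon$ when passing to the limit in the equations.

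The heart of the argument is the identification of these limits. The key step is to pass from $\beta(S^\varepsilon)$ to $S^\varepsilon$: since $\beta(S^\varepsilon)$ converges a.e.\ and, by \Hb{8}, the inverse $\beta^{-1}$ is H\"older continuous, I would conclude $S^\varepsilon\to S:=\beta^{-1}(\lim\beta(S^\varepsilon))$ a.e.\ in $\Omega$, which is (\ref{conv_eps5}) and identifies the weak $H^1$ limit as $\beta(S)$, giving (\ref{conv_eps3}). Continuity of $p_c$ (and of its extension) then forces $p_c(S^\varepsilon)\to p_c(S)$ a.e., so the weak $H^1$ limit of $p_c(S^\varepsilon)$ must be $p_c(S)$, proving (\ref{conv_eps4}). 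Independently, from $u^\varepsilon\to u$ a.e.\ and the strict monotonicity and invertibility of $\hat u$ in \Hb{5} (choosing the negative extension strictly increasing), I would set $p_g:=\hat u^{-1}(u)$ and obtain $p_g^\varepsilon=\hat u^{-1}(u^\varepsilon)\to p_g$ a.e., which is (\ref{conv_eps7a}) and simultaneously yields $u=\hat u(p_g)$, i.e.\ (\ref{conv_eps1}). Continuity of $\hat\rho_g$ in \Hb{6}, together with $\hat\rho_g^\varepsilon=\hat\rho_g+\varepsilon$ and $\varepsilon\to0$, then gives (\ref{conv_eps9}), and $\rho_l^\varepsilon=\rho_l^{std}+\hat u(p_g^\varepsilon)\to\rho_l$ gives (\ref{conv_eps10}). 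Finally, the liquid pressure is recovered from the global pressure relation (\ref{glrelation}): writing $p_l^\varepsilon=p^\varepsilon-\overline P(S^\varepsilon)$ and using $p^\varepsilon\to p$ a.e., $S^\varepsilon\to S$ a.e.\ and continuity of $\overline P$, I would obtain $p_l^\varepsilon\to p_l:=p-\overline P(S)$ a.e., which is (\ref{conv_eps7}) and (\ref{conv_eps2}); the relation $S=p_c^{-1}(p_g-p_l)$ then follows by taking the a.e.\ limit in $S^\varepsilon=p_c^{-1}(p_g^\varepsilon-p_l^\varepsilon)$ and using continuity of the extended $p_c^{-1}$.

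The hard part will be the degeneracy of the capillary and global pressure functions as $S\to0$: both $p_c$ and $\overline P$ may become unbounded there, so the a.e.\ convergences $p_c(S^\varepsilon)\to p_c(S)$ and $\overline P(S^\varepsilon)\to\overline P(S)$ are not automatic from $S^\varepsilon\to S$ alone. This is exactly where \Hb{8} is essential: it makes the change of unknown $S\mapsto\beta(S)$ invertible with a continuous inverse, so that the genuinely compact quantity is $\beta(S^\varepsilon)$ rather than $S^\varepsilon$ itself. Moreover, the $L^2$ boundedness of $p_c(S^\varepsilon)$ forces its a.e.\ limit $p_c(S)$ to be finite, hence $S>0$ a.e., which legitimizes evaluating $\overline P$ and $p_c$ at the limiting saturation. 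The delicate bookkeeping is to check that the three independent routes to the saturation and pressures --- through $\beta$, through $\hat u$, and through the global pressure --- are mutually consistent; once $S>0$ a.e.\ is secured, the continuity of $\overline P$, $p_c$, $p_c^{-1}$, $\hat u^{-1}$ and $\hat\rho_g$ closes the argument and all the asserted convergences follow on the common subsequence.
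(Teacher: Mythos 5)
Your proposal is correct in outline and shares the paper's overall architecture (compactness from the uniform bounds, inversion of a monotone function to recover the saturation, then the global pressure relation to recover $p_l$), but the two identification steps are done differently. For the saturation, you invert $\beta$ using the H\"older continuity of $\beta^{-1}$ from \Hb{8}, whereas the paper works with the capillary pressure: from (\ref{uni_est_eps7}), the vanishing of $p_c(S^{\varepsilon})=(p_g^{\varepsilon}-p_l^{\varepsilon})^+$ on $\Gamma_D$ and Poincar\'e, it extracts $p_c(S^{\varepsilon})\rightharpoonup\xi$ weakly in $H^1(\Omega)$ with a.e.\ convergence, sets $S=p_c^{-1}(\xi)$, and obtains (\ref{conv_eps5}) and (\ref{conv_eps4}) in one stroke, deducing (\ref{conv_eps3}) afterwards from (\ref{uni_est_eps3}) and (\ref{conv_eps5}); your order is reversed ($\beta$ first, then $p_c$), which is equally legitimate and is in fact the mechanism the paper itself uses in the later $\delta t\to 0$ and $\eta\to 0$ limits. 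For the gas pseudo-pressure, you invert $\hat u$, setting $p_g:=\hat u^{-1}(u)$, while the paper never inverts $\hat u$: it defines $p_g:=p_l+p_c(S)$ by passing to the a.e.\ limit in $p_g^{\varepsilon}=p_l^{\varepsilon}+p_c(S^{\varepsilon})$, after which (\ref{conv_eps1}), (\ref{conv_eps9}), (\ref{conv_eps10}) follow from continuity of $\hat u$, $\hat\rho_g$ and the uniform convergence $\hat\rho_g^{\varepsilon}\to\hat\rho_g$. The paper's route buys independence from the behaviour of the extension of $\hat u$ on $(-\infty,0]$; yours buys a definition of $p_g$ that does not lean on the identity $p_g^{\varepsilon}-p_l^{\varepsilon}=p_c(S^{\varepsilon})$, which literally holds only where $p_g^{\varepsilon}\geq p_l^{\varepsilon}$.

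Two points in your plan need repair. First, Lemma~\ref{lema-GP-bounds} does not give the $L^2(\Omega)$ bound on $p_c(S^{\varepsilon})$ you invoke: it bounds $p_g^+$, $S_lp_l$ and $S_gp_g$, but provides no lower bound on $p_l$ where $S_l$ is small, which is exactly where $p_c(S_l)$ is large. The bound you need comes instead from (\ref{uni_est_eps7}) together with the zero trace of $p_c(S^{\varepsilon})$ on $\Gamma_D$ and Poincar\'e --- precisely the paper's step. Second, $\hat u^{-1}$ is only usable if (i) the extension of $\hat u$ to negative arguments is strictly increasing, which you may indeed stipulate since Remark~\ref{rem-u} leaves it arbitrary, and (ii) the a.e.\ limit $u(x)$ stays in the \emph{open} range of $\hat u$. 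The upper endpoint is excluded by $(p_g^{\varepsilon})^+\leq |p^{\varepsilon}|+C$ from Lemma~\ref{lema-GP-bounds} and the a.e.\ convergence of $p^{\varepsilon}$; but the lower endpoint --- $p_g^{\varepsilon}\to-\infty$ on a set of positive measure, where $\hat u$, being bounded, carries no information --- is not excluded by anything you cite. It can be ruled out with the estimates at hand: for fixed $\eta>0$, (\ref{uni-est-gl}) bounds $\nabla(p_g^{\varepsilon}-p_l^{\varepsilon})$ in $L^2(\Omega)$ uniformly in $\varepsilon$, so $p_g^{\varepsilon}-p_l^{\varepsilon}$ is bounded in $V$ and converges a.e.\ to a finite limit along a subsequence --- but this step must be stated; alternatively, adopt the paper's definition of $p_g$ and avoid inverting $\hat u$ altogether.
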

\begin{proof}
  Convergence (\ref{conv_eps2}) follows directly from (\ref{uni_est_eps2}). 
From (\ref{uni_est_eps7}) and the Dirichlet boundary condition we conclude that  $\hat p_c(S^{\varepsilon})\longrightarrow \xi$ weakly in $ H^1(\Omega)$
and a.e. in $\Omega$, for some $\xi\in H^1(\Omega)$, $\xi\geq 0$.
Since the function $p_c$ is invertible we can define $S=p_c^{-1}(\xi)$ and now (\ref{conv_eps4}) and (\ref{conv_eps5}) follow.
From  (\ref{uni_est_eps3}) and  (\ref{conv_eps5}) we obtain (\ref{conv_eps3}).

Definition of the global pressure gives 
\begin{align*}
  p_l^\varepsilon = p^\varepsilon +\int_{S^\varepsilon}^1 \frac{\lambda_g(s)}{\lambda_l(s)+\lambda_g(s)}p_c'(s) ds\longrightarrow
  p +\int_{S}^1 \frac{\lambda_g(s)}{\lambda_l(s)+\lambda_g(s)}p_c'(s) ds =: p_l, \; \text{a.e. in } \Omega,
\end{align*}
where we define limiting liquid pressure $p_l$ by its relation to the limiting global pressure. Similarly,
\begin{align*}
    p_g^\varepsilon = p_l^\varepsilon +p_c(S^\varepsilon) \longrightarrow p_l +p_c(S) =: p_g \; \text{a.e. in } \Omega.
\end{align*}
Obviously, we have $S = p_c^{-1}(p_g - p_l)$. This proves (\ref{conv_eps7}), (\ref{conv_eps7a}) and (\ref{conv_eps9}) and (\ref{conv_eps10})
follow from the continuity of the functions $\hat{\rho}_g$ and  $\hat u$, and the uniform convergence
of $\hat{\rho}_g^\varepsilon$  towards $\hat{\rho}_g$. Finally, (\ref{conv_eps1}) is a consequence of (\ref{uni_est_eps1}). 
\end{proof}

\subsection{End of the proof of Theorem~\ref{TM_3}}

In order to prove Theorem~\ref{TM_3} we need to pass to the limit as $\varepsilon \to 0$ 
in the equations (\ref{eqsystem1-nl-N0})-(\ref{eqsystem2-nl-N0}) using convergences established in 
Lemma~\ref{conv_eps}. This passage to the limit is evident in all terms except the terms with gradients of the phase 
pressures. In these terms we use relation (\ref{fund-gl1a}).
For example:

\begin{align*}\nonumber
    \int_{\Omega} {u}^\varepsilon \lambda_l^{\varepsilon}({S}^\varepsilon) & \Kb  \nabla p_l^\varepsilon    \cdot \nabla \psi \, dx =
 \int_{\Omega} {u}^\varepsilon [\lambda_l(S^\varepsilon){\Kb}\nabla p^\varepsilon + \gamma(S^\varepsilon){\Kb}\nabla \beta(S^\varepsilon)  ] 
      \cdot \nabla \psi \, dx \\
      &\to \int_{\Omega} {u} [\lambda_l(S){\Kb}\nabla p + \gamma(S){\Kb}\nabla \beta(S)  ]     \cdot \nabla \psi \, dx  
      = \int_{\Omega} {u} \lambda_l(S){\Kb}\nabla p_l     \cdot \nabla \psi \, dx ,
 \end{align*}
 where limit liquid pressure $p_l$ is defined from the limit global pressure $p$ and the limit saturation $S$ by (\ref{glrelation}). 
 In this way we have proved that for given $p_l^{k-1}, p_g^{k-1} \in  V$, $ p_g^{k-1}\geq 0$, there exists at least one 
 solution  $p_l^k, p_g^k \in  V$ of (\ref{tmp_p1-1}) and (\ref{tmp_p1-2}).
In order to finish the  proof of Theorem~\ref{TM_3} we need to prove non-negativity of pseudo gas pressure $p_g^k$. 

\begin{lemma}\label{lemma-pos}
	Let  $p_l^{k-1}, p_g^{k-1} \in  V$,  $p_g^{k-1} \geq 0$. Then solution to the problem (\ref{tmp_p1-1}),  (\ref{tmp_p1-2}) satisfy
	$p_g^k \geq 0.$ 
\end{lemma}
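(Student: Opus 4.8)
The plan is to adapt the standard positivity argument to the discrete system (\ref{tmp_p1-1})--(\ref{tmp_p1-2}). I set $\zeta=\min(\hat u(p_g^k),0)\le 0$. Since $\hat u$ is increasing with $\hat u(0)=0$ (\Hb{5}), and its extension to negative arguments is chosen to keep this monotonicity, the sign of $\hat u(p_g^k)$ agrees with that of $p_g^k$; in particular $\{\zeta<0\}=\{p_g^k<0\}$, on this set $\nabla\zeta=\nabla u^k$ while $\nabla\zeta=0$ elsewhere, and $u^k\ge 0$ is equivalent to $p_g^k\ge 0$. Because $p_g^k\in V$ vanishes on $\Gamma_D$ and $\hat u(0)=0$, the trace of $\zeta$ on $\Gamma_D$ is zero; as $\zeta$ is bounded, both $\psi=\zeta$ and $\varphi=\zeta^2/2$ belong to $V$ and are admissible test functions. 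The two decisive structural facts are that $\rho_g^k=0$ on $\{p_g^k<0\}$ (\Hb{6}) and that $\rho_l^k=\rho_l^{std}+u^k$.

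I then test (\ref{tmp_p1-1}) with $\varphi=\zeta^2/2$ and (\ref{tmp_p1-2}) with $\psi=\zeta$ and \emph{subtract} the first identity from the second. All integrands are supported on $\{p_g^k<0\}$, where $\zeta=u^k$ and $\nabla\varphi=\zeta\nabla\zeta=u^k\nabla u^k$. The subtraction is designed to trigger a cascade of cancellations in the flux terms: the two liquid-convection contributions $u^k\lambda_l(S_l^k)\Kb\nabla p_l^k\cdot\nabla u^k$ cancel exactly, as do the two $\eta$-regularisation contributions $\eta\,u^k\nabla(p_g^k-p_l^k)\cdot\nabla u^k$; the gas Darcy term $\rho_g^k\lambda_g(S_l^k)\Kb\nabla p_g^k\cdot\nabla\psi$ vanishes since $\rho_g^k=0$; and the two diffusion contributions telescope into
\begin{align*}
\Big(\frac{\rho_l^{std}}{\rho_l^k}+\frac{u^k}{\rho_l^k}\Big)\Phi S_l^k D\,|\nabla u^k|^2=\Phi S_l^k D\,|\nabla u^k|^2,
\end{align*}
using $\rho_l^{std}+u^k=\rho_l^k$. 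Hence the entire flux part reduces to the coercive quantity $\int_\Omega \Phi S_l^k D\,|\nabla\zeta|^2\,dx\ge 0$.

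Next I check the signs of the remaining terms, using the hypothesis $p_g^{k-1}\ge 0$, which gives $u^{k-1}\ge 0$ and $\rho_g^{k-1}\ge 0$. On $\{p_g^k<0\}$ the combined accumulation integrand works out to
\begin{align*}
\tfrac12 S_l^k (u^k)^2+S_l^{k-1}\big(\tfrac12 (u^k)^2-u^{k-1}u^k\big)-\rho_g^{k-1}S_g^{k-1}u^k,
\end{align*}
and each summand is nonnegative because $u^k<0$, $u^{k-1}\ge 0$, $\rho_g^{k-1}\ge 0$ and $0<S_l\le 1$ by (\ref{Sl-minmax}); so the accumulation difference is $\ge 0$. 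The source difference reduces to $\tfrac12\int_{\Omega} S_l^k F_P^k (u^k)^2\ge 0$ since $F_P\ge 0$. On the right-hand side the two gravity integrals $\rho_l^k u^k\lambda_l(S_l^k)\Kb\gb\cdot\nabla u^k$ cancel between the two equations (the $(\rho_g^k)^2$ gravity term of (\ref{tmp_p1-2}) vanishing on $\{p_g^k<0\}$), leaving only the injection term $-\tfrac12\int_\Omega F_I^k\zeta^2\le 0$, as $F_I\ge 0$.

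Collecting everything, the subtracted identity reads (accumulation $\ge 0$) $+$ (flux $\ge 0$) $+$ (source $\ge 0$) $=$ (right-hand side $\le 0$), which forces every term to vanish; in particular $\int_\Omega\Phi S_l^k D\,|\nabla\zeta|^2\,dx=0$. Since $\Phi\ge\phi_m>0$, $D\ge D_0>0$ and $S_l^k>0$ a.e., this yields $\nabla\zeta=0$ a.e.; together with the vanishing trace of $\zeta$ on $\Gamma_D$ and $|\Gamma_D|>0$ this gives $\zeta\equiv 0$, i.e.\ $u^k\ge 0$ and hence $p_g^k\ge 0$. I expect the main obstacle to be the bookkeeping of the subtraction: verifying that the minus-combination produces exactly the claimed cancellations of the $\nabla p_l^k$ and $\eta$ terms and the telescoping of the diffusion coefficient into $\Phi S_l^k D$ — this is precisely the algebraic symmetry that the careful linearisation of the scheme was designed to preserve — while keeping track of which terms survive once one restricts to $\{p_g^k<0\}$.
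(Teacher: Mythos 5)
Your proposal is correct and follows essentially the same route as the paper: the paper also tests (\ref{tmp_p1-1}) with $X^2/2$ and (\ref{tmp_p1-2}) with $X=\min(u^k,0)$, subtracts, and uses $\rho_g^k=0$ on $\{p_g^k\le 0\}$, the telescoping $\rho_l^{std}+u^k=\rho_l^k$, and $p_g^{k-1}\geq 0$ to reduce everything to a sum of nonnegative terms bounded by $-\int_\Omega F_I X^2/2\,dx\le 0$. The only cosmetic difference is the last step: you conclude from the vanishing gradient term together with the zero trace on $\Gamma_D$, while the paper can read $X=0$ off the accumulation term $\tfrac12\Phi X^2(S^k+S^{k-1})$ directly, since $S^k>0$ a.e.
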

\begin{proof}
Let us define $X=\min(u^k, 0)$. We set $\varphi = X^2/2$ in the liquid phase equation  (\ref{tmp_p1-1}) and $\psi = X$ in 
the gas phase equation (\ref{tmp_p1-2}). Note that integration in these equations   is performed only on the part of the domain where 
$p_g^{k}\leq 0$ which cancels the terms multiplied by  $\rho_g^k$, since  $\hat{\rho}_g(p_g)=0$ for $p_g\leq 0$.
By subtracting the  liquid phase equation  from the gas phase equation we get,
\begin{align*}
	\frac{1}{\delta t}\int_{\Omega} &\Phi \left( X^2 S^{k} - \left(   u^{k-1} S^{k-1} +   \rho_g^{k-1}(1-S^{k-1})\right) X - (S^k - S^{k-1}) 	\frac{X^2}{2}\right)  \, dx  \\
	& + \int_{\Omega}\Phi S^k D\lvert \nabla X\lvert ^2 \, dx + \int_{\Omega} S^k F_P^k \frac{X^2}{2} \, dx = -\int_{\Omega} F_I\frac{X^2}{2} \, dx.
\end{align*}
Due to the fact $p_g^{k-1}\geq 0$ and $X\leq 0$ we have 
\begin{align*}
	- \left(   u^{k-1} S^{k-1} +   \rho_g^{k-1}(1-S^{k-1})\right) X \geq 0
\end{align*}
 which leads to
 
 \begin{align*}
 \frac{1}{\delta t}\int_{\Omega} &\Phi\frac{X^2}{2} \left( S^{k} + S^{k-1}\right)  \, dx + \int_{\Omega}\Phi S^k D\lvert \nabla X\lvert ^2 \, dx + \int_{\Omega} S^k F_P^k \frac{X^2}{2} \, dx \leq -\int_{\Omega} F_I\frac{X^2}{2} \, dx \leq 0
 \end{align*}
 From here we conclude that $X=0$ and Lemma~\ref{lemma-pos} is proved.
 \end{proof}
This completes the proof of Theorem~\ref{TM_3}.

\section{Uniform estimates with respect to $\delta t$}
\label{sec:un-est}

From Lemma~\ref{lema-uni-est} and Remark~\ref{remark-bound} it follows that there exists  constant $C$,  
independent of $\delta t$, $\eta$ and $\varepsilon$  such that each solution to the problem 
(\ref{eqsystem1-nl-N0}), (\ref{eqsystem2-nl-N0}) and (\ref{eqsystem3-nl-N0}) satisfies: 
\begin{align*}
  \frac{1}{\delta t}\int_{\Omega}\Phi [  {\cal E}^\varepsilon(p_l^\varepsilon, p_g^\varepsilon) -  {\cal E}^\varepsilon(p_l^*, p_g^*) ]\, dx 
   + \int_{\Omega} [ |\nabla p^\varepsilon|^2
 & +    |\nabla \beta(S^\varepsilon)|^2 + |\nabla u^\varepsilon|^2  ]\, dx \\ 
& + \eta \int_{\Omega} |\nabla p_g^\varepsilon - \nabla p_l^\varepsilon|^2 \, dx
 \leq  C.\nonumber
\end{align*}
In this inequality $p_g^\varepsilon$ is not necessarily positive, but due to monotonicity of the function $\hat{u}$ we have
${\cal E}^\varepsilon(p_l^\varepsilon, p_g^\varepsilon) \geq {\cal E}^\varepsilon(p_l^\varepsilon, (p_g^\varepsilon)^+)$. 
Then, it is easy to see that 
\begin{align*}
  \int_{\Omega}\Phi   {\cal E}^\varepsilon(p_l^\varepsilon, (p_g^\varepsilon)^+)\, dx \longrightarrow   \int_{\Omega}\Phi   {\cal E}(p_l, p_g)\, dx\\
 \int_{\Omega}\Phi {\cal E}^\varepsilon(p_l^*, p_g^*) \, dx \longrightarrow   \int_{\Omega}\Phi   {\cal E}(p_l^*, p_g^*)\, dx
\end{align*}
as $\varepsilon\to 0$, where $p_l$ and $p_g$ are the limits from Lemma~\ref{conv_eps} .
 Then, using weak lower semicontinuity of norms, at the limit we get
\begin{align*}
  \frac{1}{\delta t}\int_{\Omega}\Phi [  {\cal E}(p_l, p_g) -  {\cal E}(p_l^*, p_g^*) ]\, dx + 
 \int_{\Omega} [ |\nabla p|^2   &+    |\nabla \beta(S)|^2 + |\nabla u|^2  ]\, dx \\ 
& + \eta \int_{\Omega} |\nabla p_g - \nabla p_l|^2 \, dx \leq  C ,\nonumber
\end{align*}
where constant $C$ do not change and stay independent of  $\delta t$ and $\eta$.
This bound can be applied to all time levels $k$, leading to
\begin{align*}
   \frac{1}{\delta t}\int_{\Omega}\Phi [  {\cal E}(p_l^k, p_g^k) -  {\cal E}(p_l^{k-1}, p_g^{k-1}) ]\, dx + 
   \int_{\Omega} [ |\nabla p^k|^2
  &+    |\nabla \beta(S^k)|^2 + |\nabla u^k|^2  ]\, dx \\ 
& + \eta \int_{\Omega} |\nabla p_g^k - \nabla p_l^k|^2 \, dx \leq  C .\nonumber
\end{align*}
 Multiplying this inequality by $\delta t$ and summing from $1$ to $M$ we obtain
\begin{align*}
  \int_{\Omega}\Phi   {\cal E}(p_l^M, p_g^M)\, dx 
  &+  \int_{Q_T} ( |\nabla p^{\delta t}|^2
  +    |\nabla \beta(S^{\delta t})|^2 + |\nabla u^{\delta t}|^2  )\, dx \\ 
& + \eta \int_{Q_T} |\nabla p_g^{\delta t} - \nabla p_l^{\delta t}|^2 \, dx \leq  C
+\int_{\Omega}\Phi {\cal E}(p_l^{0}, p_g^{0}) \, dx.\nonumber
\end{align*}
From Lemma~\ref{lema-E-est} and $p_g^0\in L^2(\Omega)$, $p_g^0\geq 0$  we get the following bound:
\begin{lemma}\label{lema-est-dt}
Let $p_l^{\delta t}$ and $p_g^{\delta t}$ be the solution to (\ref{discrete-1dt}), (\ref{discrete-2dt}) and 
let the secondary variables be denoted by $S^{\delta t}$, $u^{\delta t}$ and $p^{\delta t}$.
Then there exists a constant $C>0$, independent of  $\delta t$ and $\eta$, such that 
\begin{align}
   \int_{Q_T} ( |\nabla p^{\delta t}|^2
  +    |\nabla \beta(S^{\delta t})|^2 + |\nabla u^{\delta t}|^2  )\, dx \, dt
 + \eta \int_{Q_T} |\nabla p_g^{\delta t} - \nabla p_l^{\delta t}|^2 \, dx \, dt \leq  C.
 \label{delta-t-est}
\end{align}
\end{lemma}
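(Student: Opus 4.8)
The plan is to obtain \eqref{delta-t-est} by summing the single--time--step energy inequality already available for the elliptic problems and then absorbing the two endpoint contributions of the functional $\mathcal{E}$ by means of Lemma~\ref{lema-E-est}. First I would start from the $\varepsilon$--uniform estimate of Lemma~\ref{lema-uni-est}, written in global--pressure form as in Remark~\ref{remark-bound}, which holds at each time level $k$ with a constant $C$ independent of $\delta t$, $\eta$ and $\varepsilon$. Passing to the limit $\varepsilon\to 0$ along the subsequence of Lemma~\ref{conv_eps}, using weak lower semicontinuity of the Dirichlet norms, and first replacing $\mathcal{E}^\varepsilon(p_l^\varepsilon,p_g^\varepsilon)$ by the smaller $\mathcal{E}^\varepsilon(p_l^\varepsilon,(p_g^\varepsilon)^+)$ (monotonicity of $\hat u$) whose integral converges to $\int_\Omega\Phi\,\mathcal{E}(p_l,p_g)\,dx$, this yields for each $1\leq k\leq M$ an inequality whose accumulation part is the exact time difference $\tfrac{1}{\delta t}\int_\Omega\Phi[\mathcal{E}(p_l^k,p_g^k)-\mathcal{E}(p_l^{k-1},p_g^{k-1})]\,dx$, the remaining terms being the three gradient integrals and the $\eta$--integral, all bounded by $C$.

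The crucial structural feature is that the accumulation term is a genuine difference of one and the same functional $\mathcal{E}$, so that multiplying the per--step inequality by $\delta t$ and summing over $k=1,\dots,M$ makes the energy part telescope. Since the discrete fields are piecewise constant in time, each time--summed interior integral equals the corresponding integral over $Q_T$, and the right--hand side collapses to $C\,M\,\delta t=CT$. This gives
\begin{align*}
\int_{\Omega}\Phi\,\mathcal{E}(p_l^M,p_g^M)\,dx
+\int_{Q_T}\big(|\nabla p^{\delta t}|^2+|\nabla\beta(S^{\delta t})|^2+|\nabla u^{\delta t}|^2\big)\,dx\,dt
+\eta\int_{Q_T}|\nabla p_g^{\delta t}-\nabla p_l^{\delta t}|^2\,dx\,dt
\leq C+\int_{\Omega}\Phi\,\mathcal{E}(p_l^0,p_g^0)\,dx,
\end{align*}
with $C$ still free of $\delta t$ and $\eta$.

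It then remains to dispose of the two $\mathcal{E}$--terms. For the final--time term I would use the lower bound $\mathcal{E}(p_l^M,p_g^M)\geq -M_{p_c}$ from Lemma~\ref{lema-E-est}, so that $\int_\Omega\Phi\,\mathcal{E}(p_l^M,p_g^M)\,dx\geq -M_{p_c}\phi_M|\Omega|$; transferring this constant to the right leaves exactly the three nonnegative gradient integrals and the $\eta$--integral on the left. For the initial term I would invoke the upper bound $\mathcal{E}(p_l^0,p_g^0)\leq C(|p_g^0|+1)$, which applies precisely because $p_g^0\geq 0$; combined with $\Phi\leq\phi_M$ from \Hb{1} and the Cauchy--Schwarz inequality this gives $\int_\Omega\Phi\,\mathcal{E}(p_l^0,p_g^0)\,dx\leq C'(\|p_g^0\|_{L^2(\Omega)}+1)$, finite by hypothesis. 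Collecting the constants proves \eqref{delta-t-est}.

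The only delicate point, and the one I would verify carefully, is the claimed uniformity of the constant in both $\delta t$ and $\eta$. This is not automatic in a time--stepping argument, but here it holds because the per--step constant of Lemma~\ref{lema-uni-est} is itself independent of $\delta t$ and $\eta$, the telescoping prevents any accumulation of error over the $M=T/\delta t$ steps, and the two surviving endpoint contributions are controlled purely by the data $M_{p_c}$, $\phi_M$, $|\Omega|$ and $\|p_g^0\|_{L^2(\Omega)}$. Note also that the $\eta$--term carries a nonnegative sign on the left throughout, so it is merely bounded above and no lower bound on $\eta$ is ever required.
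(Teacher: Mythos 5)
Your proposal is correct and follows essentially the same route as the paper's own proof: the per-step estimate of Lemma~\ref{lema-uni-est} in the global-pressure form of Remark~\ref{remark-bound}, the passage $\varepsilon\to 0$ using the monotonicity replacement ${\cal E}^\varepsilon(p_l^\varepsilon,p_g^\varepsilon)\geq {\cal E}^\varepsilon(p_l^\varepsilon,(p_g^\varepsilon)^+)$ together with weak lower semicontinuity, telescoping of the $\mathcal{E}$-differences over the time levels, and the two endpoint bounds from Lemma~\ref{lema-E-est} with $p_g^0\geq 0$, $p_g^0\in L^2(\Omega)$. The only cosmetic difference is that you spell out the Cauchy--Schwarz step for the initial term and the sign discussion of the $\eta$-term, which the paper leaves implicit.
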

Let us introduce the function 
\begin{align*}
r_g^k = \hat u(p_g^{k})S^k+\hat \rho_g(p_g^{k})(1-S^k),
\end{align*}
and corresponding piecewise constant and  piecewise linear time dependent functions which will be denoted 
by  ${r}_g^{\delta t}$ and $\tilde{r}_g^{\delta t}$, respectively.

\begin{lemma}\label{Prop2} 
Let $p_l^{\delta t}$ and $p_g^{\delta t}$ be the solution to (\ref{discrete-1dt}), (\ref{discrete-2dt}) from Theorem~\ref{TM_3}.
Then the following bounds uniform with respect to $\delta t$ hold:
     \begin{align}
	(p^{\delta t})_{\delta t} \text{ is uniformly bounded in } L^2(0,T; V),\label{Pb-1:1}\\
    (\beta(S^{\delta t}))_{\delta t} \text{ is uniformly bounded in } L^2(0,T; H^1(\Omega )),\label{Pb-1:2}\\
    (u^{\delta t})_{\delta t} \text{ is uniformly bounded in } L^2(0,T; V),\label{Pb-1:3}\\
	(p_c(S^{\delta t}))_{\delta t} \text{ is uniformly bounded in } L^2(0,T; V),\label{Pb-1:4}\\
	(S^{\delta t})_{\delta t} \text{ is uniformly bounded in } L^2(0,T; H^1(\Omega)),\label{Pb-1:5}\\
(\tilde S^{\delta t})_{\delta t} \text{ is uniformly bounded in } L^2(0,T; H^1(\Omega)),\label{Pb-1:12}\\
(p_l^{\delta t})_{\delta t} \text{ is uniformly bounded in } L^2(0,T; V),\label{Pb-1:6}\\
  (p_g^{\delta t})_{\delta t} \text{ is uniformly bounded in }  L^2(0,T; V),\label{Pb-1:7}\\
  (r_g^{\delta t})_{\delta t} \text{ is uniformly bounded in } L^2(0,T; H^1(\Omega)),\label{Pb-1:8}
    \end{align}
     \begin{align}
	(\tilde r_g^{\delta t})_{\delta t} \text{ is uniformly bounded in } L^2(0,T; H^1(\Omega)),\label{Pb-1:11}\\
	(\Phi\partial_t\tilde{S}^{\delta t})_{\delta t} \text{ is uniformly bounded in } L^2(0,T; H^{-1}(\Omega)),\label{Pb-1:9}\\
	(\Phi\partial_t\tilde{r}_{g}^{\delta t})_{\delta t} \text{ is uniformly bounded in } L^2(0,T; H^{-1}(\Omega)).\label{Pb-1:10}
    \end{align}
\end{lemma}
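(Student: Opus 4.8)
The plan is to derive all twelve bounds from the single energy estimate of Lemma~\ref{lema-est-dt}, combined with the algebraic relations between the primary variables, the secondary variables and the global pressure; throughout, constants are allowed to depend on the fixed parameter $\eta$ but must not depend on $\delta t$. Lemma~\ref{lema-est-dt} already supplies, uniformly in $\delta t$,
\[
\|\nabla p^{\delta t}\|_{L^2(Q_T)},\ \|\nabla\beta(S^{\delta t})\|_{L^2(Q_T)},\ \|\nabla u^{\delta t}\|_{L^2(Q_T)}\le C,\qquad \eta\,\|\nabla(p_g^{\delta t}-p_l^{\delta t})\|_{L^2(Q_T)}^2\le C.
\]
First I would upgrade these gradient bounds to full $L^2(0,T;V)$ bounds. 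Since $p^{\delta t}$, $u^{\delta t}$, $p_l^{\delta t}$, $p_g^{\delta t}$ all vanish on $\Gamma_D$ (the Dirichlet condition together with $\hat u(0)=0$ from \Hb{5}, and $\overline P(1)=0$ for the global pressure), the Poincar\'e inequality turns a gradient bound into a $V$-bound: this gives \eqref{Pb-1:1} at once, and \eqref{Pb-1:3} once one recalls that $u=\hat u(p_g)$ is bounded by $u_{max}$. Estimate \eqref{Pb-1:2} follows because $\beta(S_l)=\int_0^{S_l}\alpha$ is bounded on $[0,1]$ by \Hb{8}, so its $L^2(Q_T)$ norm is controlled and the gradient bound supplies the remainder.

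Next I would treat the capillary-pressure and saturation bounds. Because $p_c^{-1}$ is extended by the constant $1$ for non-positive arguments (\Hb{4}), one has the pointwise identity $p_c(S^{\delta t})=(p_g^{\delta t}-p_l^{\delta t})^+$, hence $|\nabla p_c(S^{\delta t})|\le|\nabla(p_g^{\delta t}-p_l^{\delta t})|$ a.e.; the $\eta$-weighted bound then gives $\|\nabla p_c(S^{\delta t})\|_{L^2(Q_T)}^2\le C/\eta$ uniformly in $\delta t$, and since $p_c(S^{\delta t})=0$ on $\Gamma_D$, Poincar\'e yields \eqref{Pb-1:4}. From $|\nabla S_l|=|\nabla p_c(S_l)|/|p_c'(S_l)|\le |\nabla p_c(S_l)|/M_0$ (using $|p_c'|\ge M_0$ from \Hb{4}) and $0\le S_l\le1$ we get \eqref{Pb-1:5}. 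For the phase pressures I would invoke the global-pressure gradient relations \eqref{glrelation1-grad}--\eqref{glrelation2-grad}: as $\lambda_g/\lambda,\lambda_l/\lambda\in[0,1]$, the gradients of $p_l$ and $p_g$ are dominated by those of $p$ and $p_c(S)$, so \eqref{Pb-1:6}--\eqref{Pb-1:7} follow with Poincar\'e. For $r_g^{\delta t}=u^{\delta t}S^{\delta t}+\rho_g^{\delta t}(1-S^{\delta t})$ I expand $\nabla r_g=S\nabla u+(1-S)\hat\rho_g'(p_g)\nabla p_g+(u-\rho_g)\nabla S$: every coefficient is bounded (by \Hb{5}, \Hb{6}), the three gradients lie in $L^2(Q_T)$, and $r_g$ itself is bounded, which gives \eqref{Pb-1:8}. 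The piecewise-linear interpolants \eqref{Pb-1:11}, \eqref{Pb-1:12} then follow from the corresponding piecewise-constant bounds via the standard interpolation inequality $\|\tilde v^{\delta t}\|_{L^2(0,T;X)}^2\le 2\|v^{\delta t}\|_{L^2(0,T;X)}^2+\delta t\,\|v^0\|_X^2$, using that $S^0,r_g^0\in H^1(\Omega)$ (since $p_l^0,p_g^0\in H^1(\Omega)$ and $p_c^{-1}$ is Lipschitz).

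The time-derivative bounds \eqref{Pb-1:9}, \eqref{Pb-1:10} are the heart of the argument and the step I expect to be most delicate. Since $\partial_t\tilde S^{\delta t}=\partial^{-\delta t}S^{\delta t}$ is piecewise constant in time, I would estimate its $L^2(0,T;V')$ norm by taking an arbitrary $\varphi\in L^2(0,T;V)$ and replacing it with its time-average $\varphi^{\delta t}\in l_{\delta t}(V)$: the pairing $\int_{Q_T}\Phi\,\partial^{-\delta t}S^{\delta t}\,\varphi$ is unchanged because the first factor is piecewise constant, $\varphi^{\delta t}$ is an admissible test function in \eqref{discrete-1dt}, and the averaging operator is an $L^2(0,T;V)$-contraction, so $\|\varphi^{\delta t}\|_{L^2(0,T;V)}\le\|\varphi\|_{L^2(0,T;V)}$. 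Inserting $\varphi^{\delta t}$ into \eqref{discrete-1dt} re-expresses the pairing through the flux, source and gravity terms, each of which is bounded by $C\|\varphi\|_{L^2(0,T;V)}$ using the bounds already obtained (boundedness of $\lambda_l,\lambda_g,\Kb,\Phi,S,1/\rho_l,D,\rho_l,\gb$, together with $\nabla p_l^{\delta t},\nabla u^{\delta t}\in L^2(Q_T)$ and $F_I^{\delta t},F_P^{\delta t}$ bounded in $L^2(Q_T)$). The same argument applied to \eqref{discrete-2dt} with $\psi^{\delta t}$ gives \eqref{Pb-1:10}. The one point needing care is the $\eta$-regularisation flux $\eta\nabla(p_g^{\delta t}-p_l^{\delta t})$: rather than a plain $L^2$ estimate I would write $\|\eta\nabla(p_g^{\delta t}-p_l^{\delta t})\|_{L^2(Q_T)}^2=\eta\bigl(\eta\|\nabla(p_g^{\delta t}-p_l^{\delta t})\|_{L^2(Q_T)}^2\bigr)\le\eta C$, so this term stays bounded uniformly in $\delta t$ (though not in $\eta$), which is precisely what the statement permits. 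The main obstacle is thus handling simultaneously the time-discrete structure, the admissibility of piecewise-constant test functions, and the correct bookkeeping of the $\eta$-dependence.
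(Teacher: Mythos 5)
Your proposal reproduces the paper's argument almost step for step: the energy estimate (\ref{delta-t-est}) plus Poincar\'e for \eqref{Pb-1:1}--\eqref{Pb-1:3}, the identity $p_c(S^{\delta t})=(p_g^{\delta t}-p_l^{\delta t})^+$ together with the $\eta$-weighted term for \eqref{Pb-1:4}, the lower bound $|p_c'|\geq M_0$ for \eqref{Pb-1:5}, relation (\ref{glrelation1-grad}) for \eqref{Pb-1:6}, the same expansion of $\nabla r_g^{\delta t}$ for \eqref{Pb-1:8}, interpolation with the $H^1$ initial data for \eqref{Pb-1:12} and \eqref{Pb-1:11}, and duality in the discrete equations for \eqref{Pb-1:9}--\eqref{Pb-1:10}. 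Your time-averaging argument ($\varphi\mapsto\varphi^{\delta t}$, unchanged pairing with the piecewise-constant difference quotient, contraction of the averaging operator) is a correct and useful fleshing-out of a step the paper states in one sentence, and your bookkeeping $\|\eta\nabla(p_g^{\delta t}-p_l^{\delta t})\|_{L^2(Q_T)}^2\leq\eta C$ is exactly right.

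There is, however, one step that fails as written: the derivation of \eqref{Pb-1:7} from \eqref{glrelation2-grad}. That identity holds only on the set $\{p_g^{\delta t}\geq p_l^{\delta t}\}$; the paper says explicitly, right after (\ref{glrelation-g}), that the representation $p=p_g+\hat P(S_l)$ is not valid where $p_g<p_l$. On the one-phase region $\{p_g^{\delta t}<p_l^{\delta t}\}$ one has $S^{\delta t}=1$ and $\nabla p_c(S^{\delta t})=0$ a.e., so your claim that $\nabla p_g^{\delta t}$ is dominated by $\nabla p^{\delta t}$ and $\nabla p_c(S^{\delta t})$ would force $\nabla p_g^{\delta t}=\nabla p^{\delta t}=\nabla p_l^{\delta t}$ there, which is false in general: in that region $p_g^{\delta t}$ is a pseudo-pressure decoupled from $p_l^{\delta t}$ and from the global pressure, and the disappearance of the gas phase is precisely the situation this model is built to handle. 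The paper instead obtains \eqref{Pb-1:7} from the boundedness of the regularizing term in (\ref{delta-t-est}): write $\nabla p_g^{\delta t}=\nabla p_l^{\delta t}+\nabla(p_g^{\delta t}-p_l^{\delta t})$ and use $\|\nabla(p_g^{\delta t}-p_l^{\delta t})\|_{L^2(Q_T)}^2\leq C/\eta$, which holds on all of $Q_T$ regardless of the sign of $p_g^{\delta t}-p_l^{\delta t}$, together with \eqref{Pb-1:6}. Since you invoke this very bound for \eqref{Pb-1:4}, the repair is one line; but as stated your argument for \eqref{Pb-1:7} is incorrect on the one-phase region. Everything else in the proposal is sound.
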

\begin{proof} 
Estimates (\ref{Pb-1:1}), (\ref{Pb-1:2}), (\ref{Pb-1:3}), (\ref{Pb-1:4}) are consequences of (\ref{delta-t-est}). 
Using \Hb{4} we get
\begin{align*}
\eta \int_{Q_T}|\nabla(p_g^{\delta t} - p_l^{\delta t} )|^2\, dx \, dt =
	\eta \int_{Q_T}|\nabla(p_c^{\delta t})|^2\, dx \, dt
	 \geq M_0^2 \eta \int_{Q_T} |\nabla S^{\delta t}|^2\, dx \, dt, 
\end{align*}
and estimate (\ref{Pb-1:5}) follows from  (\ref{delta-t-est}).
 Estimate (\ref{Pb-1:6}) is a consequence of  (\ref{glrelation1-grad}) and estimates (\ref{Pb-1:1}) and (\ref{Pb-1:4}).
 Estimate (\ref{Pb-1:7}) for  $p_g^{\delta t}$ follow from the boundedness of the regularizing term in  (\ref{delta-t-est}).

  From definition of function $r_g^{\delta t}$ we have
 \begin{align*}
 	\nabla r_g^{\delta t}  &= \sum_{k=1}^{M}\left(  S^k \nabla u^k + ({u}^k -\hat{\rho}_g(p_g^k))\nabla S^k + \hat{\rho}_g'(p_g^k)(1 - S^k)\nabla p_g^k \right)
	\chi_{(t_{k-1},t_k]}(t) 
 \end{align*}
Due to the fact  that $\hat{\rho}_g$, $\hat{u}$  and $\hat{\rho}_g^\prime$ are bounded functions we conclude
\begin{align*}
	&\lVert \nabla r_g^{\delta t} \lVert^2_{L^2(Q_T)}
	 \leq C(\lVert \nabla u^{\delta t} \lVert^2_{L^2(Q_T)} + \lVert \nabla p_g^{\delta t} \lVert^2_{L^2(Q_T)} 
	 + \lVert \nabla S^{\delta t} \lVert^2_{L^2(Q_T)}), 
\end{align*}
where constant $C$ does not depend on $\delta t$. Applying  (\ref{Pb-1:3}), (\ref{Pb-1:5}) and (\ref{Pb-1:7}) we get estimate (\ref{Pb-1:8}). 
From definitions of functions $\tilde S^{\delta t}$ and $\tilde{r}_g^{\delta t}$, and the fact that $p_g^0,p_l ^0\in H^1(\Omega)$,
we have
\begin{align*}
	&\lVert \nabla \tilde S^{\delta t} \lVert^2_{L^2(Q_T)} \leq C( \lVert \nabla S^{\delta t} \lVert^2_{L^2(Q_T)} + \lVert \nabla S^{0} \lVert^2_{L^2(\Omega)}), \\
	&\lVert \nabla \tilde r_g^{\delta t} \lVert^2_{L^2(Q_T)} \leq C(\lVert \nabla p_g^{\delta t} \lVert ^2_{L^2(Q_T)} + \lVert \nabla S^{\delta t} \lVert^2_{L^2(Q_T)} + \lVert \nabla p_g^{0} \lVert^2_{L^2(\Omega)} + \lVert \nabla S^{0} \lVert^2_{L^2(\Omega)}),
\end{align*}
and therefore we obtain estimates (\ref{Pb-1:12}) and (\ref{Pb-1:11}).
The estimates (\ref{Pb-1:9}) and (\ref{Pb-1:10}) follow from (\ref{Pb-1:3}) - (\ref{Pb-1:8}) and variational equations 
(\ref{discrete-1dt}) and (\ref{discrete-2dt}).
 \end{proof}

\subsection{End of the proof of Theorem~\ref{TM2}}
\label{sec-TM2}

In this section we pass to the limit as $\delta t\to 0$.
 
\begin{proposition}\label{Prop4}
 Let  \Hb{1}-\Hb{8} hold     and assume $(p_{l}^0, p_g^{0})\in H^1(\Omega)\times H^1(\Omega)$, $p_g^0\geq 0$.  
Then there is subsequence, still denoted $(\delta t)$, such that the following convergences hold when $\delta t$ goes to zero:
\begin{align}
	&S^{\delta t} \to S \text{ strongly in } L^2(Q_T) \text{ and a.e. in } Q_T,\label{P2-c3}\\
	&\beta(S^{\delta t}) \rightharpoonup \beta(S) \text{ weakly in }  L^2(0,T; H^1(\Omega)) \text{ and a.e. in } Q_T,\label{P2-c4}\\
	&p^{\delta t} \rightharpoonup p \text{ weakly in }  L^2(0,T; V) ,\label{P2-c5}\\
	&p_l^{\delta t} \rightharpoonup p_l \text{ weakly in }  L^2(0,T; V), \label{P2-c7}\\
	&p_g^{\delta t} \rightharpoonup p_g \text{ weakly in }   L^2(0,T; V)  \text{ and a.e. in } Q_T, \label{P2-c8}\\
	&u^{\delta t} \rightharpoonup u =\hat{u}(p_g) \text{ weakly in }  L^2(0,T; V),\label{P2-c9}\\
	&{r}_{g}^{\delta t} \to \hat{u}(p_g)S + \hat{\rho}_g(p_g)(1-S) \text{ strongly in }  L^2(Q_T) \text{ and a.e. in } Q_T.\label{P2-c6}
\end{align}
Furthermore, $0\leq S\leq 1 $, 
   and
 \begin{align}
	\Phi \partial_t \tilde{S}^{\delta t} \rightharpoonup
	\Phi \partial_t S\label{P2-c5-1}
	\quad \text{ weakly in }  L^2(0,T; H^{-1}(\Omega)),\\
	\Phi \partial_t \tilde{r}_{g}^{\delta t} \rightharpoonup
	\Phi \partial_t(\hat\rho_g(p_g)(1-S)+ \hat{u}(p_g) S)\label{P2-c5-2}
	\quad \text{ weakly in }  L^2(0,T; H^{-1}(\Omega)).
    \end{align}
\end{proposition}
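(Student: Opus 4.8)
The plan is to pass to a subsequence using the $\delta t$-uniform bounds collected in Lemma~\ref{Prop2}, separating the variables for which only weak compactness is available (the pressures, the global pressure and $u$) from the two accumulation quantities $S^{\delta t}$ and $r_g^{\delta t}$, for which strong $L^2(Q_T)$ compactness can be extracted. First I would use reflexivity: by (\ref{Pb-1:1}), (\ref{Pb-1:3}), (\ref{Pb-1:6}), (\ref{Pb-1:7}) the sequences $p^{\delta t}$, $u^{\delta t}$, $p_l^{\delta t}$, $p_g^{\delta t}$ are bounded in $L^2(0,T;V)$, and by (\ref{Pb-1:2}), (\ref{Pb-1:4}) the sequences $\beta(S^{\delta t})$, $p_c(S^{\delta t})$ are bounded in $L^2(0,T;H^1(\Omega))$; extracting a common subsequence gives the weak limits in (\ref{P2-c5}), (\ref{P2-c7}) and the weak parts of (\ref{P2-c4}), (\ref{P2-c8}), (\ref{P2-c9}).

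The heart of the argument is the strong convergence of $S^{\delta t}$ and $r_g^{\delta t}$. The interpolants $\tilde S^{\delta t}$, $\tilde r_g^{\delta t}$ are bounded in $L^2(0,T;H^1(\Omega))$ by (\ref{Pb-1:12}), (\ref{Pb-1:11}), while $\Phi\partial_t\tilde S^{\delta t}$, $\Phi\partial_t\tilde r_g^{\delta t}$ are bounded in $L^2(0,T;H^{-1}(\Omega))$ by (\ref{Pb-1:9}), (\ref{Pb-1:10}). I would apply a weighted Aubin--Lions--Simon compactness argument in which the porosity weight (time independent, with $\phi_m\le\Phi\le\phi_M$) is kept inside the duality: the uniform time-translation estimate follows from writing $\int_\Omega\Phi|\tilde v(t+\tau)-\tilde v(t)|^2\,dx$ as the pairing of $\int_t^{t+\tau}\Phi\partial_s\tilde v\,ds\in H^{-1}(\Omega)$ with $\tilde v(t+\tau)-\tilde v(t)\in H^1(\Omega)$, so that no multiplication of $H^{-1}$ elements by $1/\Phi$ is ever needed. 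This gives $\tilde S^{\delta t}\to S$ and $\tilde r_g^{\delta t}\to r_g$ strongly in $L^2(Q_T)$. To replace the linear interpolants by the piecewise-constant functions I would use $\Phi(v^{\delta t}-\tilde v^{\delta t})|_{J_n}=(t_n-t)(\Phi\partial_t\tilde v^{\delta t})|_{J_n}$, which yields $\|\Phi(v^{\delta t}-\tilde v^{\delta t})\|_{L^2(0,T;H^{-1}(\Omega))}\le C\delta t\to 0$, and then bound, again through the weighted pairing, $\phi_m\|v^{\delta t}-\tilde v^{\delta t}\|_{L^2(Q_T)}^2\le\|\Phi(v^{\delta t}-\tilde v^{\delta t})\|_{L^2(0,T;H^{-1}(\Omega))}\|v^{\delta t}-\tilde v^{\delta t}\|_{L^2(0,T;H^1(\Omega))}$ with the $L^2(0,T;H^1)$ bounds. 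Hence $S^{\delta t}\to S$ and $r_g^{\delta t}\to r_g$ strongly in $L^2(Q_T)$ and, along a further subsequence, a.e.\ in $Q_T$, which proves (\ref{P2-c3}); the bounds $0<S^{\delta t}\le 1$ from (\ref{Sl-minmax}) pass to the limit to give $0\le S\le 1$.

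With $S^{\delta t}\to S$ a.e.\ in hand, identification of the remaining limits is routine except for $p_g^{\delta t}$: continuity of $\beta$ and $p_c$ identifies the weak limits of $\beta(S^{\delta t})$ and $p_c(S^{\delta t})$ as $\beta(S)$ and $p_c(S)$, giving (\ref{P2-c4}). The delicate point, and the step I expect to be the main obstacle, is the a.e.\ convergence of $p_g^{\delta t}$ in (\ref{P2-c8}): since the global pressure converges only weakly, one cannot recover $p_g$ a.e.\ from $p_g=p_l+p_c(S)$. Instead I would exploit the monotone structure of the accumulation variable. Writing $r_g^{\delta t}=G(p_g^{\delta t},S^{\delta t})$ with $G(p_g,S)=\hat u(p_g)S+\hat\rho_g(p_g)(1-S)$, the positivity $p_g^{\delta t}\ge 0$ (Theorem~\ref{TM_3}) and the bounds $\hat u'>0$, $\hat\rho_g'\ge 0$ from \Hb{5}, \Hb{6} make $p_g\mapsto G(p_g,S)$ strictly increasing on $[0,\infty)$ for every $S\in[0,1]$, so $G$ is invertible in its first argument with inverse continuous on its range; consequently $p_g^{\delta t}=G^{-1}(r_g^{\delta t},S^{\delta t})\to G^{-1}(r_g,S)=:p_g$ a.e.\ This fixes $p_g$, whence $u^{\delta t}=\hat u(p_g^{\delta t})\to\hat u(p_g)$ a.e., which together with the weak $L^2(0,T;V)$ bound gives $u=\hat u(p_g)$ and (\ref{P2-c9}); passing to the limit in $p_l^{\delta t}=p^{\delta t}-\overline P(S^{\delta t})$, with $\overline P(S^{\delta t})\to\overline P(S)$ strongly by continuity and boundedness of $\overline P$, identifies $p_l=p-\overline P(S)$ and yields (\ref{P2-c7}); finally the a.e.\ identity $r_g=\hat u(p_g)S+\hat\rho_g(p_g)(1-S)$ gives (\ref{P2-c6}).

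For the time derivatives I would argue by distributions: since $\tilde S^{\delta t}\to S$ and $\tilde r_g^{\delta t}\to r_g$ strongly in $L^2(Q_T)$, the distributional derivatives $\Phi\partial_t\tilde S^{\delta t}$ and $\Phi\partial_t\tilde r_g^{\delta t}$ converge to $\Phi\partial_t S$ and $\Phi\partial_t r_g$ in $\mathcal{D}'(Q_T)$; combined with their uniform $L^2(0,T;H^{-1}(\Omega))$ bounds (\ref{Pb-1:9}), (\ref{Pb-1:10}), the weak $L^2(0,T;H^{-1})$ limits are identified as exactly these, proving (\ref{P2-c5-1}) and (\ref{P2-c5-2}) with $r_g=\hat u(p_g)S+\hat\rho_g(p_g)(1-S)$. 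The only genuinely non-routine ingredients are the weighted compactness, handled by keeping $\Phi$ inside the duality pairing, and the monotone inversion that promotes the strong convergence of $r_g^{\delta t}$ into the a.e.\ convergence of $p_g^{\delta t}$.
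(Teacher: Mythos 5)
Your proposal is sound in its architecture and reaches the paper's result, but it deviates from the paper's proof in two places, one of which leaves a small hole. Where the paper passes from the linear interpolants $\tilde S^{\delta t},\tilde r_g^{\delta t}$ to the piecewise-constant functions by invoking Lemma~3.2 of \cite{schweizer1}, you reprove the transfer by hand via the identity $\Phi(v^{\delta t}-\tilde v^{\delta t})|_{J_n}=(t_n-t)\,\Phi\partial_t\tilde v^{\delta t}|_{J_n}$ and the weighted duality pairing; this is self-contained and in fact more careful than the paper's bare assertion of relative compactness, since keeping $\Phi$ inside the bracket correctly avoids the fact that multiplication by $1/\Phi$ (merely $L^\infty$, bounded below) is not bounded on $H^{-1}(\Omega)$. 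The more substantive deviation is the identification step: the paper first runs Minty's trick — the monotone inequality tested with arbitrary $v\in L^2(Q_T)$, passage to the limit using the strong convergence of $r_g^{\delta t}$ and $S^{\delta t}$ and the weak convergence of $p_g^{\delta t}$, then $v=p_g-\sigma v_1$, $\sigma\to 0$ — to obtain $r_g=\hat u(p_g)S+\hat\rho_g(p_g)(1-S)$ with $p_g$ the \emph{finite} weak limit, and only afterwards inverts pointwise to get $p_g^{\delta t}\to p_g$ a.e. You skip Minty and invert $G(\cdot,S)$ directly on the a.e.\ limits $(r_g,S)$. Here is the gap: by \Hb{5}, \Hb{6} the functions $\hat u,\hat\rho_g$ are bounded and strictly increasing, so $G(\cdot,S)$ maps $[0,\infty)$ onto a bounded \emph{half-open} interval whose supremum is not attained; "inverse continuous on its range" does not by itself exclude that $r_g(x,t)$ equals that supremum, i.e.\ that $p_g^{\delta t}(x,t)\to+\infty$, on a set $E$ of positive measure, where your $p_g:=G^{-1}(r_g,S)$ is undefined. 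The repair is one line, and it uses a bound you already cited: on $E$ one would have $p_g^{\delta t}\to+\infty$ pointwise (since $|G(p,S^{\delta t})-G(p,S)|\leq (u_{max}+\rho_M)|S^{\delta t}-S|$ uniformly in $p$, so $G(p_g^{\delta t},S)\to\sup_p G(p,S)$ forces $p_g^{\delta t}\to\infty$), and Fatou's lemma together with the uniform $L^2$ bound (\ref{Pb-1:7}) gives $|E|=0$. With that line added, your direct inversion is a legitimate and shorter substitute for the paper's Minty argument, which sidesteps the issue by construction.

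One incidental claim should be corrected: you justify $\overline P(S^{\delta t})\to\overline P(S)$ strongly by "continuity and boundedness of $\overline P$", but \Hb{4} only requires $\int_0^1 p_c<\infty$ and permits $p_c(0^+)=+\infty$; since $kr_g(0)\geq kr_m>0$ by \Hb{3}, near $S_l=0$ one has $\lambda_g/\lambda\geq c>0$ and hence $\overline P(S_l)\geq c\,(p_c(S_l)-p_c(s_0))$, so $\overline P$ need not be bounded (it is $\hat P$, not $\overline P$, that Lemma~\ref{lema-GP-bounds} shows bounded). Fortunately this step is superfluous for the proposition: (\ref{P2-c7}) asserts only weak convergence of $p_l^{\delta t}$ to some $p_l$, which you already obtained from (\ref{Pb-1:6}), and the paper likewise makes no identification of $p_l$ at this stage. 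The remaining parts of your argument — the extraction of weak limits, $0\leq S\leq 1$, the identification $u=\hat u(p_g)$ from a.e.\ convergence plus the weak $L^2(0,T;V)$ bound, and the distributional identification of $\Phi\partial_t\tilde S^{\delta t}$ and $\Phi\partial_t\tilde r_g^{\delta t}$ — coincide with the paper's proof.
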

\begin{proof}
From estimates (\ref{Pb-1:9}) and (\ref{Pb-1:12}) we conclude that $(\tilde{S}^{\delta t})$ is relatively compact in $L^2(Q_T)$
and one can extract a subsequence  converging strongly in $L^2(Q_T)$ and a.e. in $Q_T$ to some $S\in L^2(Q_T)$. 
Obviously we have $0\leq S\leq 1$. By applying Lemma~3.2 from \cite{schweizer1} 
we find (\ref{P2-c3}).
The weak convergences in (\ref{P2-c4}), (\ref{P2-c5}),  (\ref{P2-c7}), (\ref{P2-c8}) and (\ref{P2-c9}) follow from  Lemma~\ref{Prop2}.

The estimates (\ref{Pb-1:11}) and (\ref{Pb-1:10}) give relative compactness of the sequence $(\tilde{r}_{g}^{\delta t})_{\delta t}$
and, on a subsequence,
$$ \tilde{r}_{g}^{\delta t} \to r_g \text{ strongly in }  L^2(Q_T) \text{ and a.e. in } Q_T. $$
By applying Lemma~3.2 from \cite{schweizer1}  we also have the convergence 
$$ {r}_{g}^{\delta t} \to r_g \text{ strongly in }  L^2(Q_T) \text{ and a.e. in } Q_T. $$ 
It remains to show that $r_g = \hat{u}(p_g)S + \hat{ \rho}_g(p_g)(1 - S)$. 
From assumptions \Hb{5} and \Hb{6} we have for any $v\in L^{2}(Q_T)$
\begin{align*}
	\int_{Q_T} \Big( \hat{u}(p_g^{\delta t} &) S^{\delta t} + \hat{\rho}_g(p_g^{\delta t})  ( 1 - S^{\delta t}) \\ 
	&- [\hat{u}(v) S^{\delta t} + \hat{\rho}_g(v)  ( 1 - S^{\delta t}) ]\Big) ( p_g^{\delta t} - v ) \, dx dt \geq 0 .
\end{align*}
After passing to the limit $\delta t \to 0$ we obtain for all $v\in L^2(Q_T)$,
\begin{align*}
\int_{Q_T} \Big( r_g - [\hat{u}(v) S + \hat{\rho}_g(v)  ( 1 - S) ] \Big) ( p_g - v ) \, dx \, dt \geq 0.
\end{align*}
 By setting $v = p_g - \sigma v_1$ and passing to the limit $\sigma \to 0$ we get for all $v_1 \in L^2(Q_T)$:
 \begin{align*}
   \int_{Q_T} \Big( r_g - [\hat{u}(p_g) S + \hat{\rho}_g(p_g)  ( 1 - S) ] \Big) v_1 \, dx \, dt \geq 0,
 \end{align*}
 which gives $r_g = \hat{u}(p_g)S + \hat{\rho}_g(p_g)(1 - S)$ and (\ref{P2-c6}) is proved.  
 Then obviously we also have $\hat{u}(p_g^{\delta t})S + \hat{\rho}_g(p_g^{\delta t})(1-S) \to \hat{u}(p_g)S + \hat{\rho}_g(p_g)(1-S)$
 a.e. in $Q_T$. 
 Since the functions $\hat{u}$ i $\hat{\rho}_g$ are $C^1$ {increasing} functions we have
 \begin{align*}
      \hat{u}'(p_g^{\delta t})S + \hat{\rho}'_g(p_g^{\delta t})(1 - S) > 0,
 \end{align*}
 which gives $p_g^{\delta t} \to p_g$ a.e. in $Q_T$. Consequently we conclude that $u=\hat{u}(p_h)$.
Convergences (\ref{P2-c5-1}) and (\ref{P2-c5-2}) are consequences of estimates (\ref{Pb-1:9}), (\ref{Pb-1:10})
and (\ref{P2-c6}). 
\end{proof}


Using the convergence results in Proposition~\ref{Prop4} and the boundedness of all nonlinear
coefficients, we can
 now pass to the limit as $\delta t\to 0$ in the variational equations (\ref{discrete-1dt}), (\ref{discrete-2dt})
and find that, for all $\varphi, \psi\in L^2(0,T; V)$ equations (\ref{TM2-ve1}) and (\ref{TM2-ve2}) hold.

Let us denote $r_g = \hat\rho_g(p_g)(1-S)+ \hat{u}(p_g) S$. Then, from $S, r_g \in L^2(0,T; H^1(\Omega))$   and
$\Phi\partial_t S, \Phi\partial_t r_g\in L^2(0,T; H^{-1}(\Omega))$ it follows immediately that
 $S,\ r_g\in  C([0,T]; L^2(\Omega)).$ By standard technique, using integration by parts, we see that the 
initial conditions, $ S(0) = S^{0}$ and  $ r_{g}(0)=r_{g}^{0}$  are satisfied a.e. in $\Omega$ at $t=0$.
Finally, nonnegativity of the gas pseudo-pressure, $p_g \geq 0$, follows from the pointwise convergence.
This concludes the proof of Theorem~\ref{TM2}.

\section{Proof of the Theorem~\ref{TM1}}
\label{sec:proof-1}

Theorem~\ref{TM1} will be proved by passing to the limit as $\eta \to 0$ in the regularized problem 
(\ref{TM2-ve1}), (\ref{TM2-ve2}). We
now denote explicitly the dependence of the regularized solution on the parameter $\eta$. 
In order to apply Theorem~\ref{TM2}, we will regularize the initial conditions 
$p_l^0, p_g^{0}\in L^2(\Omega)$ with the regularization parameter $\eta$ and denote the regularized initial conditions by 
$p_l^{0,\eta}, p_g^{0,\eta}\in H^1(\Omega)$. We assume that $p_l^{0,\eta}\to p_l^0$ and  $p_g^{0,\eta}\to p_g^{0}$
in $L^2(\Omega)$ and a.e. in $\Omega$, when $\eta$ tends to zero.

As before we introduce notation
\begin{align}
r_g^{\eta}=\hat{u}(p_g^{\eta})S^{\eta}+\hat \rho_g(p_g^{\eta})(1-S^{\eta}).
\end{align}
By passing to the limit $\delta t \to 0$ to the estimate (\ref{delta-t-est}) and using weak lower semi-continuity of 
the norms we find 
\begin{align}
	   \int_{Q_T} ( |\nabla p^{\eta}|^2
	   +    |\nabla \beta(S^{\eta})|^2 + |\nabla u^{\eta }|^2  )\, dx \, dt
	   + \eta \int_{Q_T} |\nabla p_g^{\eta} - \nabla p_l^{\eta}|^2 \, dx \, dt \leq  C,
	   \label{eta-est}
\end{align}
where $C>0$ is independent of $\eta$. From this estimate we obtain following bounds with respect to $\eta$:
\begin{align}
&	(p^{\eta})_{\eta} \;\text{ is uniformly bounded in } L^2(0,T; V), \label{L4.1-1}\\
&	(u^{\eta})_{\eta} \;\text{ is uniformly bounded in } L^2(0,T; V), \label{L4.1-2}\\
&	(\beta^\eta(S^\eta))_{\eta} \;\text{ is uniformly bounded in } L^2(0,T; H^1(\Omega)), \label{L4.1-3}\\
&       (\sqrt{\eta} \nabla p_c(S^\eta))_{\eta} \;
\text{ is uniformly bounded in } L^2(Q_T)^d ,\label{L4.1-5}\\ 
&	(\Phi \partial_t(S^\eta))_\eta
\text{ is uniformly bounded in } L^2(0,T; H^{-1}(\Omega)),\label{L4.1-6}\\
&	(\Phi \partial_t(r_g^{\eta} ))_\eta
\text{ is uniformly bounded in } L^2(0,T; H^{-1}(\Omega)).\label{L4.1-7}
\end{align}
 Through the limit process are also conserved the following estimates:
\begin{align}
& 0\leq S^{\eta}\leq 1\; \text{ almost everywhere in } Q_T, \label{L4.1-8}\\
& p_g^{\eta}\geq 0\; \text{ almost everywhere in } Q_T. \label{L4.1-9}
\end{align}
Due to Lemma~\ref{lema-GP-bounds} and (\ref{L4.1-9}) we also have
\begin{align}
&	(p_g^{\eta})_{\eta} \;\text{ is uniformly bounded in } L^2(Q_T). \label{L4.1-new}
\end{align}

For passage to the limit as $\eta\to 0$ we need  compactness in $L^2(Q_T)$ of sequences
$(S^\eta)$ and $(r_g^\eta)$ which will be proved by an application to Lemma~4.2 in \cite{AAPP}. 
Therefore, we need the following estimates: 
\begin{lemma}\label{Prop5}
	Under assumptions \Hb{1} - \Hb{8}, we have the following inequalities 
	\begin{align}
	\int_{Q_T} \lvert S^{\eta}(x+\Delta x, t) - S^{\eta}(x,t) \lvert^2 \, dx \, dt \leq \omega (\lvert \Delta x \lvert)  \label{Lb3-1:1}\\
	\int_{Q_T} \lvert r_g^{\eta}(x+\Delta x, t) - r_g^{\eta}(x,t) \lvert^2\, dx \, dt \leq \tilde\omega (\lvert \Delta x \lvert) \label{Lb3-1:2},
	\end{align}
	for all $\Delta x \in \mathbb{R}^d$, where functions $\omega$ and $\tilde{\omega} $ are continuous and independent of $\eta$  and satisfy $\lim_{|\Delta x |\rightarrow 0} \omega( |\Delta x|) = 0$ and $\lim_{|\Delta x |\rightarrow 0} \tilde\omega( |\Delta x|) = 0$.
\end{lemma}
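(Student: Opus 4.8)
The plan is to prove both inequalities as uniform (in $\eta$) space-translation estimates of Kolmogorov--Riesz--Fréchet type, which then feed into the compactness criterion of Lemma~4.2 in \cite{AAPP}. The essential difficulty is that, as $\eta\to 0$, neither $S^\eta$ nor $p_g^\eta$ is bounded in $L^2(0,T;H^1(\Omega))$ uniformly; only $p^\eta$, $u^\eta$ and $\beta(S^\eta)$ are, by (\ref{L4.1-1})--(\ref{L4.1-3}). Hence the elementary bound $\int_{Q_T}|v(x+\Delta x,t)-v(x,t)|^2\,dx\,dt\le |\Delta x|^2\|v\|_{L^2(0,T;H^1)}^2$ (valid on the interior strip $\{x:[x,x+\Delta x]\subset\Omega\}$, as accounted for by the cited criterion) may be applied directly only to $p^\eta$, $u^\eta$ and $\beta(S^\eta)$, and every remaining quantity must be reduced to these three.

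For (\ref{Lb3-1:1}) I would exploit \Hb{8}. Writing $a=x+\Delta x$, $b=x$, the Hölder continuity of $\beta^{-1}$ gives $|S^\eta(a)-S^\eta(b)|\le C|\beta(S^\eta(a))-\beta(S^\eta(b))|^{\tau}$. Squaring, integrating over $Q_T$ and applying Hölder's inequality with exponents $1/\tau$ and $1/(1-\tau)$ yields
\begin{equation*}
\int_{Q_T}|S^\eta(a)-S^\eta(b)|^2\,dx\,dt \le C|Q_T|^{1-\tau}\left(\int_{Q_T}|\beta(S^\eta(a))-\beta(S^\eta(b))|^2\,dx\,dt\right)^{\tau}.
\end{equation*}
Since $\beta(S^\eta)$ is bounded in $L^2(0,T;H^1(\Omega))$ uniformly in $\eta$, the inner integral is $\le C|\Delta x|^2$, so $\omega(|\Delta x|)=C|\Delta x|^{2\tau}\to 0$.

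For (\ref{Lb3-1:2}) I would split $r_g^\eta=u^\eta S^\eta+\rho_g^\eta S_g^\eta$. For the first product I use $u^\eta(a)S^\eta(a)-u^\eta(b)S^\eta(b)=S^\eta(a)[u^\eta(a)-u^\eta(b)]+u^\eta(b)[S^\eta(a)-S^\eta(b)]$; bounding $|S^\eta|\le 1$ and $|u^\eta|\le u_{max}$ reduces its $L^2(Q_T)$ translation to the $u^\eta$-translation ($\le C|\Delta x|^2$, by the uniform bound (\ref{L4.1-2})) and the $S^\eta$-translation already controlled above. The delicate term is $\rho_g^\eta S_g^\eta$. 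Here I would use the global-pressure representation (\ref{glrelation-g}): where $p_g^\eta\ge p_l^\eta$ one has $p_g^\eta=p^\eta-\hat P(S^\eta)$, and where $p_g^\eta<p_l^\eta$ one has $S^\eta=1$ so $S_g^\eta=0$; therefore the single formula $\rho_g^\eta S_g^\eta=g(p^\eta,S^\eta)$ holds everywhere, with $g(p,S)=\hat\rho_g(p-\hat P(S))(1-S)$. Since $\hat P$ is bounded on $(0,1)$ (as in the proof of Lemma~\ref{lema-GP-bounds}) and $\hat\rho_g$ is Lipschitz and bounded by \Hb{6}, $g$ is Lipschitz in $p$; the dependence on $S$ is controlled through the identity
\begin{equation*}
(1-S_1)\bigl(\hat P(S_1)-\hat P(S_2)\bigr)=\bigl[(1-S_1)\hat P(S_1)-(1-S_2)\hat P(S_2)\bigr]+(S_1-S_2)\hat P(S_2),
\end{equation*}
whose first bracket is Hölder of order $\overline{\tau}$ by \Hb{9} and whose second term is $\le\|\hat P\|_\infty|S_1-S_2|$. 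This gives $|g(p^\eta(a),S^\eta(a))-g(p^\eta(b),S^\eta(b))|\le C|p^\eta(a)-p^\eta(b)|+C|S^\eta(a)-S^\eta(b)|^{\overline\tau}+C|S^\eta(a)-S^\eta(b)|$, and squaring, integrating, and using the $p^\eta$-translation, the $S^\eta$-translation, and Hölder's inequality on the $\overline\tau$-power term yields $\tilde\omega(|\Delta x|)=C(|\Delta x|^2+|\Delta x|^{2\tau}+|\Delta x|^{2\tau\overline\tau})\to 0$.

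The \textbf{main obstacle} is precisely the term $\rho_g^\eta S_g^\eta$: because $p_g^\eta$ carries no uniform $H^1$ bound its gradient cannot be used, and the only way to control $\hat\rho_g(p_g^\eta)$ is to trade $p_g^\eta$ for the $H^1$-bounded global pressure $p^\eta$ together with the saturation-dependent shift $\hat P(S^\eta)$. This is exactly the role of \Hb{9}, which furnishes the Hölder modulus of the combined quantity $(1-S)\hat P(S)$ that survives the degeneracy as $S\to1$. A secondary point requiring care is the reconciliation of the two-phase formula $p_g=p-\hat P(S)$ with the one-phase region $p_g<p_l$: there the corresponding identity for $\hat\rho_g$ would fail pointwise, but the vanishing factor $1-S=0$ makes both sides zero, so the single formula $g(p,S)$ remains globally valid. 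Boundary effects are handled through the interior-strip formulation built into the cited compactness lemma.
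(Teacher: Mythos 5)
Your proof is correct and follows essentially the same route as the paper's: \Hb{8} together with the uniform $H^1$ bound on $\beta(S^\eta)$ for the saturation translates, the product splitting of $r_g^\eta$ using $0\leq S^\eta\leq 1$ and $|u^\eta|\leq u_{max}$, the substitution $p_g^\eta = p^\eta - \hat{P}(S^\eta)$ made globally valid by the vanishing factor $1-S^\eta$ in the one-phase region, and \Hb{9} to handle the combined quantity $(1-S)\hat{P}(S)$. The only discrepancy is your exponent $2\tau\overline{\tau}$ where the paper writes $2\overline{\tau}$; yours is in fact the more careful bookkeeping after applying H\"older's inequality to the $\overline{\tau}$-power term, and in either case the modulus vanishes as $|\Delta x|\to 0$, so the conclusion is unaffected.
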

\begin{proof}
By using \Hb{8} and bound (\ref{L4.1-3}) we obtain in a standard way
\begin{align}
\int_{Q_T}  &\lvert S^{\eta}(x + \Delta x, t) - S^{\eta}(x, t)  \lvert ^2 \, dx  \, dt 
 \leq C |\Delta x|^{2\tau}, \label{Pb3-1:1a}
\end{align}
which proves (\ref{Lb3-1:1}). 
In order to obtain (\ref{Lb3-1:2}) we will consider the two parts of $r_g^\eta$ separately. 
The first part,  $\hat{u} (p_g) S$, is easy to estimate using (\ref{Pb3-1:1a}) and the bound 
(\ref{L4.1-2}). We get 
\begin{align}
\int_{Q_T} &\lvert \hat{u}(p_g^{\eta}( x+\Delta x, t )) S^{\eta}(x + \Delta x, t)) - \hat{u} (p_g^{\eta}(x,t))S^{\eta}(x,t) \lvert^2 \, dx \, dt \leq C( | \Delta x|^{2} + |\Delta x |^{2\tau})
\end{align}
The second term $(1-S^{\eta})\hat{\rho}_g(p_g^{\eta})$ can be written as 
$(1-S^{\eta})\hat{\rho}_g(p^{\eta} -\hat{P}(S^\eta))$ in the whole domain $Q_T$ since $1-S^{\eta}$
is equal to zero in the one phase region.     We have, 
\begin{align*}
\int_{Q_T} &| (1-S^{\eta})(x+\Delta x, t ) \hat{\rho}_g(p_g^{\eta}( x+\Delta x,t )) - (1-S^{\eta})(x,t)\hat{\rho}_g(p_g^{\eta}(x,t)) |^2 \, dx \, dt \\
& \leq  \int_{Q_T} |(1-S^{\eta})(x+\Delta x, t ) \left( \hat{\rho}_g(p_g^{\eta}(x+ \Delta x, t)) - \hat{\rho}_g(p_g^{\eta}(x,t)) \right) |^2 \, dx \, dt  \\
& +  \int_{Q_T} |(S^{\eta}(x+\Delta x, t) - S^{\eta}(x,t)) \hat{\rho}_g(p_g^{\eta}(x,t)) |^2 \, dx \, dt
\end{align*}
The second term on the right-hand side is estimated by using (\ref{Pb3-1:1a}) and the boundedness of the function $\hat{\rho}_g$.
In order to estimate the first term on the right hand side we first note that by \Hb{6} the function $\hat{\rho}_g$
has bounded derivative. Then we can estimate,
\begin{align*}
\int_{Q_T} &|(1-S^{\eta})(x+\Delta x, t ) \left( \hat{\rho}_g(p_g^{\eta}(x+ \Delta x, t)) - \hat{\rho}_g(p_g^{\eta}(x,t)) \right) |^2 \, dx \, dt \\
& \leq C \int_{Q_T}  | p^{\eta}( x + \Delta x, t ) - p^{\eta}( x,t ) |^2 \, dx \, dt  \\
& + C \int_{Q_T} |(1-S^{\eta})(x+\Delta x, t) \hat{P}( S^{\eta}(x + \Delta x, t)) - (1-S^{\eta})(x, t)\hat{P}(S^{\eta}(x, t)) |^2 \, dx \, dt \\
& + C \int_{Q_T} |(S^{\eta}(x+\Delta x, t) -S^{\eta})(x, t))  \hat{P}(S^{\eta}(x, t)) |^2 \, dx \, dt
\end{align*}
The first integral on the right hand side is estimated due to (\ref{L4.1-1}), and the estimate for the third 
integral follows from the boundedness of the function $\hat{P}(S)$ and the bound  (\ref{Pb3-1:1a}). 
The second integral on the right hand side is estimated using  assumption \Hb{9} which finally leads to an estimate:
\begin{align*}
\int_{Q_T} \lvert r_g^{\eta}(x+\Delta x, t) - r_g^{\eta}(x,t) \lvert^2 
\leq C( |\Delta x|^2 + |\Delta x|^{2\tau} + |\Delta x|^{2\overline{\tau}}),  
\end{align*}
and (\ref{Lb3-1:2}) is proved.
\end{proof}

\begin{lemma}
	(Strong and weak convergences) Up to subsequence the following convergence results hold:
	\begin{align}
	& p^{\eta} \rightharpoonup p  \;\text{ weakly in } L^2(0,T; V), \label{L4.3-1} \\
	&\beta(S^{\eta}) \rightharpoonup \beta(S)  \;\text{ weakly in } L^2(0,T; H^1(\Omega)),\label{L4.3-2}\\
	&S^{\eta}\to S \;\ \text{strongly in } L^2(Q_T) \text{ and a.e. in } Q_T,\label{L4.3-4}\\
	&r_g^{\eta}\to \hat u(p_g) S+\hat\rho_g(p_g)(1-S)  \;\ \text{strongly in } L^2(Q_T) \text{ and a.e. in } Q_T,\label{L4.3-5}\\
	& p_g^{\eta}\rightarrow p_g \; \text{weakly and a.e in } Q_T, \label{L4.3-6}\\
	&u^{\eta}\rightharpoonup \hat{u}(p_g)  \;\text{ weakly in } L^2(0,T; V),\label{L4.3-3}\\
	& p_l^{\eta}\rightarrow p_l \; \text{a.e in } Q_T, \label{L4.3-7} \\
	&\Phi \partial_t  S^{\eta}  \rightharpoonup
	\Phi \partial_t S \text{ weakly in } L^2(0,T; H^{-1}(\Omega)), \label{L4.3-8}\\
	&\Phi \partial_t r_g^{\eta} \rightharpoonup \Phi \partial_t(\hat u(p_g) S+\hat\rho_g(p_g)(1-S))
	\;\text{ weakly in } L^2(0,T; H^{-1}(\Omega)) .\label{L4.3-9}
	\end{align}
	\label{lemma-4.3}
\end{lemma}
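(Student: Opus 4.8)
The plan is to mirror the structure of the $\delta t\to 0$ passage carried out in Proposition~\ref{Prop4}, the essential new feature being that here genuine time derivatives $\Phi\partial_t S^\eta$ and $\Phi\partial_t r_g^\eta$ are already controlled in $L^2(0,T;H^{-1}(\Omega))$ by (\ref{L4.1-6}) and (\ref{L4.1-7}), so no piecewise-linear interpolants are needed. First I would extract, by reflexivity together with the uniform bounds (\ref{L4.1-1}), (\ref{L4.1-2}), (\ref{L4.1-3}) and (\ref{L4.1-new}), a single subsequence along which $p^\eta\rightharpoonup p$ in $L^2(0,T;V)$, $u^\eta\rightharpoonup u^\star$ in $L^2(0,T;V)$, $\beta(S^\eta)\rightharpoonup\chi$ in $L^2(0,T;H^1(\Omega))$ and $p_g^\eta\rightharpoonup p_g$ in $L^2(Q_T)$. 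This already establishes (\ref{L4.3-1}); the limits $u^\star$ and $\chi$ will be identified once a.e.\ convergence is in hand.

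Next I would obtain the two strong convergences. Lemma~\ref{Prop5} supplies the space-translation moduli (\ref{Lb3-1:1}) and (\ref{Lb3-1:2}), uniform in $\eta$, while (\ref{L4.1-6}) and (\ref{L4.1-7}) bound the time derivatives; together these are precisely the hypotheses of the compactness result (Lemma~4.2 in \cite{AAPP}), so along a further subsequence $S^\eta\to S$ and $r_g^\eta\to r_g$ strongly in $L^2(Q_T)$ and a.e.\ in $Q_T$. This gives (\ref{L4.3-4}), and $0\le S\le 1$ passes to the limit. Since $\beta$ is continuous, $\beta(S^\eta)\to\beta(S)$ a.e., so uniqueness of limits forces $\chi=\beta(S)$, which is (\ref{L4.3-2}).

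The identification of $r_g$ is the crux and I would carry it out exactly as in Proposition~\ref{Prop4}: using the monotonicity in $v$ of the map $v\mapsto\hat u(v)S^\eta+\hat\rho_g(v)(1-S^\eta)$ coming from \Hb{5} and \Hb{6}, passing to the limit against an arbitrary $v\in L^2(Q_T)$, then choosing $v=p_g-\sigma v_1$ and letting $\sigma\to 0$, one obtains $r_g=\hat u(p_g)S+\hat\rho_g(p_g)(1-S)$, i.e.\ (\ref{L4.3-5}). Because the scalar map $p_g\mapsto\hat u(p_g)S+\hat\rho_g(p_g)(1-S)$ has strictly positive derivative $\hat u'(p_g)S+\hat\rho_g'(p_g)(1-S)>0$ (recall $p_g^\eta\ge 0$ by (\ref{L4.1-9})), its pointwise inverse is continuous and the a.e.\ convergence $r_g^\eta\to r_g$ upgrades to $p_g^\eta\to p_g$ a.e., giving (\ref{L4.3-6}). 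From here $u^\eta=\hat u(p_g^\eta)\to\hat u(p_g)$ a.e., so $u^\star=\hat u(p_g)$ and (\ref{L4.3-3}) follows; and since $p_l^\eta=p_g^\eta-p_c(S^\eta)$ with $p_c$ continuous, the a.e.\ limits of $p_g^\eta$ and $S^\eta$ yield $p_l^\eta\to p_g-p_c(S)=:p_l$ a.e., which is (\ref{L4.3-7}). Finally (\ref{L4.3-8}) and (\ref{L4.3-9}) follow from the uniform bounds (\ref{L4.1-6}) and (\ref{L4.1-7}) combined with the strong convergence of $S^\eta$ and $r_g^\eta$, which pins the weak limits of the distributional time derivatives to $\Phi\partial_t S$ and $\Phi\partial_t r_g$.

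The main obstacle is that, unlike the $\varepsilon$-limit of Lemma~\ref{conv_eps}, here the global pressure $p^\eta$ converges only weakly, so the phase pressures cannot be recovered through the global-pressure relation; their a.e.\ convergence must be routed entirely through the strong compactness of $S^\eta$ and $r_g^\eta$ and through the strict-monotonicity inversion that produces $p_g^\eta\to p_g$ a.e. Consequently everything rests on the two translation estimates of Lemma~\ref{Prop5}, which in turn rely on the H\"older hypotheses \Hb{8} and \Hb{9}; securing the compactness of $r_g^\eta$, whose component $(1-S^\eta)\hat\rho_g(p_g^\eta)$ is the delicate one, is where the real difficulty lies, with the Minty-type identification of $r_g$ being the second point requiring care.
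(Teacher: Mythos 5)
Your proposal follows essentially the same route as the paper: strong $L^2(Q_T)$ compactness of $S^{\eta}$ and $r_g^{\eta}$ via Lemma~4.2 of \cite{AAPP} applied to the translation estimates of Lemma~\ref{Prop5} together with the bounds (\ref{L4.1-6})--(\ref{L4.1-7}) and (\ref{L4.1-8}), then the Minty-type monotonicity identification of $r_g$ exactly as in Proposition~\ref{Prop4}, the strict-monotonicity inversion giving $p_g^{\eta}\to p_g$ a.e., and the remaining weak convergences read off from the uniform bounds (\ref{L4.1-1})--(\ref{L4.1-7}). The only difference is cosmetic: you spell out what the paper compresses into ``all other convergences follow immediately from the bounds'' (identifying $u^{\star}=\hat{u}(p_g)$, $\chi=\beta(S)$, and recovering $p_l^{\eta}=p_g^{\eta}-p_c(S^{\eta})\to p_l$ a.e.), and your closing remark that the a.e.\ convergence of the pressures must be routed through the compactness of $S^{\eta}$, $r_g^{\eta}$ rather than through the global-pressure relation (as in Lemma~\ref{conv_eps}) correctly pinpoints the structural novelty of the $\eta$-limit.
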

\begin{proof}
If we apply Lemma~4.2 from \cite{AAPP} to the estimates (\ref{Lb3-1:1}), (\ref{L4.1-8}) and (\ref{L4.1-6}) we obtain
\begin{align*}
		S^{\eta}\to S  \;\ \text{strongly in } L^2(Q_T) \text{ and a.e. in } Q_T.
\end{align*}
In the same way the boundedness of $r_g^{\eta}$, estimates (\ref{Lb3-1:2}) and (\ref{L4.1-7}) imply 
\begin{align*}
	r_g^{\eta}\to r_g  \;\ \text{strongly in } L^2(Q_T) \text{ and a.e. in } Q_T.
\end{align*}
We can extract a subsequence such that $p_g^{\eta}\rightharpoonup p_g$ weakly in $L^2(Q_T)$
and then by using the monotonicity argument, as in Proposition~\ref{Prop4},  we find that $r_g = \hat u(p_g) S+\hat\rho_g(p_g)(1-S)$ and  
 obtain the convergence:
\begin{align*}
p_g^{\eta}\to p_g  \text{ a.e. in } Q_T.
\end{align*}
All other convergences  follow immediately from the bounds (\ref{L4.1-1})--(\ref{L4.1-7}). 
\end{proof}

By using convergence results from previous proposition combined with the boundedness of nonlinear coefficients, 
equality (\ref{fund-gl1}) and estimate (\ref{L4.1-5}) we can pass to the limit $\eta \to 0$ 
in equations (\ref{TM2-ve1}) and (\ref{TM2-ve2}) to obtain variational equations (\ref{VF-1}) and (\ref{VF-2}).
Passing to the limit $\eta \to 0$ in inequality $p_g^{\eta} \geq 0$ we find  $p_g \geq 0$ a.e. in $Q_T$. 
Using an integration by parts in the regularized $\eta$-problem and the limit problem, with the test function of the form 
$\psi(x)\varphi(t)$, $\psi\in V$ and $\varphi\in C^1([0,T])$ with  $\varphi(0)=1$, $\varphi(T)=0$ we find in a standard way that
the initial conditions (\ref{VF-3}), (\ref{VF-4}) are satisfied.
This completes the proof of Theorem~ ~\ref{TM1}.

\section*{Acknowledgments}
This work was supported by Croatian science foundation project no 3955.


\end{document}